\newcommand{\Z}{\mathbb{Z}}
\newcommand{\Q}{\mathbb{Q}}
\newcommand{\R}{\mathbb{R}}
\newcommand{\C}{\mathbb{C}}
\newcommand{\N}{\mathbb{N}}
\newtheorem{definition}{Definition}
\newtheorem{conjecture}{Conjecture}
\newtheorem{theorem}[definition]{Theorem}
\newtheorem*{conj}{Conjecture}
\newtheorem*{theor}{Theorem}
\newtheorem{notation}{Notation}
\newtheorem{proposition}[definition]{Proposition}
\newtheorem{lemma}[definition]{Lemma}
\newtheorem{remark}[definition]{Remark}
\newtheorem{corollary}[definition]{Corollary}
\newcounter{exo} \newcounter{numexercice}
\renewcommand{\theexo}{\arabic{exo}}
\newcounter{IntroCounter}
\begin{document}
\title[]{A combinatorial description of the centralizer algebras connected to the Links-Gould Invariant} 
\author{Cristina Ana-Maria Anghel}
\address{Mathematical Institute University of Oxford, Oxford, United Kingdom} \email{palmeranghel@maths.ox.ac.uk; cristina.anghel@imj-prg.fr } 
\thanks{}
\date{\today}

\begin{abstract}
In this paper we study the tensor powers of the standard representation of the quantum super-algebra $U_q(sl(2|1)$, focusing on the rings of its algebra endomorphisms, called centraliser algebras and denoted by $LG_n$. Their dimensions were conjectured by I. Marin and E. Wagner \cite{MW}. We prove this conjecture, describing the intertwiner spaces from a semi-simple decomposition as sets consisting of certain paths in a planar lattice with integer coordinates. Using this model, we present a matrix unit basis for the centraliser algebra $LG_n$, by means of closed curves in the plane, which are included in the lattice with integer coordinates.
\end{abstract}

\maketitle
\setcounter{tocdepth}{1}
 \tableofcontents
{
\section{Introduction}
The Links-Gould invariant is a 2-variable link polynomial $LG(L;t_0,t_1) \in \Z[t_0^{\pm 1}, t_1^{\pm 1}]$, constructed from the representation theory of the super-quantum group $U_q(sl(2|1))$. This invariant was introduced by Links and Gould {\cite{LG}} in 1992, and it is a renormalised type invariant for links. In 1991, Reshetikhin-Turaev defined a recipe that having as input a category of representations of a (super) quantum group, leads to quantum invariants for links.
If one starts with a category of representations of a super-quantum group in this context, the Reshetikhin-Turaev method gives vanishing invariants. The problem is the fact that this construction uses the so called quantum dimension of a representation. For super-quantum groups with $q$ parameter (for example $U_q(sl(2|1))$), the quantum dimension of a generic representation is zero. From this reason,
 if one applies this construction to this kind of categories, one gets vanishing invariants. The procedure of renormalisation comes in order to correct this issue, and obtain secondary invariants which do not vanish. The idea is to start with a link, use the Reshetikhin-Turaev method on the link where one strand that is cut, and correct this in such a manner such that it leads to a well defined invariant. Later on, in 2007, N. Geer and B. Patureau introduced a more general method and they constructed a sequence of renormalised quantum invariants from the representation theory of the super-quantum group $U_q(sl(m|n))$. These invariants recover via a certain specialisation of the variables the Links-Gould invariant, for the case of the super-quantum group $U_q(sl(2|1))$. 

The representation theory of $U_q(sl(2|1))$ is rich and has a continous family of representations. For any $n \in \mathbb N$ and $\alpha \in \C$, there is a corresponding $U_q(sl(2|1))$-representation of dimension $4(n+1)$, which will be denoted by $V(n, \alpha)$. The Links-Gould invariant is defined starting from the simple representation $V(0,\alpha)$, corresponding to a generic complex parameter $\alpha \in \C$, in a renormalised Reshetikhin-Turaev type construction. 

Coming from a different direction, the notion of centraliser algebra is well-known in representation theory and refers to the study of intertwiner spaces corresponding to the tower of the tensor powers of certain representations. For example, if we fix $H$ to be a Hopf algebra and $V \in \mathscr Rep(H)$, then the tensor powers of $V$ have again a module structure over $H$. Furthermore, one can construct a tower of algebras, called "centraliser algebras" as:
$$C_n:=End_H(V^{\otimes n}).$$ 
As a particular case, one can study the algebra $H=\mathscr U(g)$, which is the enveloping algebra of a Lie algebra $g$, or the quantisation, $H_q=\mathscr U_q(g) $ which is called the quantum enveloping algebra.
For the case of the Lie algebra $sl(N)$ and its standard representation $V \in \mathscr Rep(\mathscr U(sl(N)))$, at the classical level, one gets the following connection to the group algebra of the symmetric group $S_n$:
$$k[S_n]\twoheadrightarrow End_{\mathscr U(sl(N))}(V^{\otimes n}).$$ At the quantised level, this corresponds to the Hecke algebra:
$$H_n\twoheadrightarrow End_{\mathscr U_q(sl(N))}(V^{\otimes n}).$$
Wenzl studied \cite{Wz} the standard representation $V$ of $so(N)$ and its relation at the classical level to the Brauer algebra $Br_n$:
$$Br_n\twoheadrightarrow End_{\mathscr U(so(N))}(V^{\otimes n}).$$
At the deformed level, Birman and Wenzl (\cite{BW}) showed that the quantisation of the Brauer algebra corresponds to the Birman-Murakami-Wenzl algebra:
$$BMW_n\twoheadrightarrow End_{\mathscr U_q(so(N))}(V^{\otimes n}).$$

In \cite{Mu2}, Kosuda and Murakami described the structure and dimensions of centralizer algebras corresponding to mixed tensor powers of different representations of $U_q(gl(n, \C))$.
Moreover, the dimensions of various types of Birman-Murakami-Wenzl and Brauer algebras were computed in \cite{C1}, \cite{C2}, \cite{C3}, \cite{C4}. 

The study of centraliser algebras for various representations of quantum groups has been broadly developed and it created bridges and interactions between different ways of describing invariants for links. The Birman-Murakami-Wenzl alebras $\{ BMW_n \}_{n \in \mathbb N}$ are a sequence of algebras which are defined as quotients of the group algebra of the braid group by cubic relations.
Murakami showed that these algebras lead to the Kauffman invariant for links (\cite{Mu}, \cite{Mu2}). 
 
Concerning the theory of super-quantum groups, for each complex parameter $\alpha \in \C$, the corresponding representation $V(0, \alpha)$ of $U_q(sl(2|1))$ will lead to a sequence of centraliser algebras $\{LG_n\}_{n \in \N}$, denoted by:
$$LG_n(\alpha)=End_{U_q(sl(2|1))}(V(0,\alpha)^{\otimes n}).$$  The super quantum group $U_q(sl(2|1))$ has an $R$-matrix, that leads to a braid group representation:
$$\rho_n(\alpha): \Bbbk B_n \rightarrow LG_n(\alpha).$$
We are interested to study properties related to this morphism. Our motivation for this question comes from its close relation to the Links-Gould invariant for links. 
In 2011, Marin and Wagner (\cite {MW}) proved that this morphism in surjective and conjectured the dimension of this centraliser algebra:
\begin{theorem}(Marin-Wagner Conjecture \cite{MW})\label{conj}
$$dim(LG_{n+1})(\alpha)=\frac{(2n)!(2n+1)!}{(n!(n+1)!)^2}.$$
\end{theorem}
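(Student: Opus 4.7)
My plan is to establish the conjecture via the double centralizer theorem combined with a planar lattice path model for the multiplicities in the decomposition of $V(0,\alpha)^{\otimes n}$.

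First I would pin down the tensor product rule for typical $U_q(sl(2|1))$-modules. Since $\alpha$ is generic, all typical simples $V(k,\beta)$ involved are projective, so $V(0,\alpha)\otimes V(k,\beta)$ is semi-simple; a dimension count ($4\cdot 4(k+1) = 16(k+1)$) together with the list of available typicals $V(k',\beta')$ of dimensions $4(k'+1)$ should force a splitting into at most four typical summands, with $k$-parameters in $\{k-1,k,k,k+1\}$ and $\beta$-parameters shifted by prescribed amounts. Iterating this rule produces a Bratteli-type graph whose level-$n$ vertices label the typical simples appearing in $V(0,\alpha)^{\otimes n}$ and whose edges record the (at most) four allowed transitions at each tensor step.

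Next I would identify this graph with a subset of $\Z^2$, using one coordinate to record the rank $k$ and the other to track the cumulative $\beta$-shift. Under this identification the multiplicity $m_\lambda^{(n)}$ of a simple $V_\lambda$ in $V(0,\alpha)^{\otimes n}$ becomes the number of admissible paths of length $n$ from the origin to the lattice point labelling $\lambda$. Because the decomposition is multiplicity-free on the quantum-group side (distinct typical simples are labelled by distinct lattice points), the double centralizer theorem yields
$$V(0,\alpha)^{\otimes n} \cong \bigoplus_\lambda V_\lambda \otimes W_\lambda, \qquad \dim LG_n(\alpha) = \sum_\lambda \bigl(m_\lambda^{(n)}\bigr)^2,$$
where the $W_\lambda$ are the pairwise non-isomorphic simple $LG_n$-modules, of dimension $m_\lambda^{(n)}$.

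Finally I would evaluate this squared sum combinatorially. By reversing one of the two paths, $\sum_\lambda (m_\lambda^{(n)})^2$ counts closed admissible walks of length $2n$ based at the origin, equivalently pairs of length-$n$ admissible paths sharing an endpoint. Given that the four-step set should decompose as a product of two copies of a ballot/Dyck step set (reflecting $4 = 2\cdot 2$), a Lindstr\"om--Gessel--Viennot determinantal evaluation or a direct reflection-principle bijection should produce $(2n-1)\,C_{n-1}^2 = \frac{(2n-2)!(2n-1)!}{((n-1)!\,n!)^2}$; reindexing $n\mapsto n+1$ recovers the Marin--Wagner formula. The principal obstacle is the middle step: extracting a genuinely planar, integer-coordinate model from the Clebsch-Gordan graph and verifying that every edge carries multiplicity one, so that the $m_\lambda^{(n)}$ really are unweighted lattice-path counts. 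Particular care is needed at the boundary $k=0$, where fewer than four transitions survive, and this asymmetry must be absorbed into the admissible step set rather than into edge weights. Once the model is correctly pinned down, the Clebsch-Gordan calculation and the final path enumeration reduce to classical arguments.
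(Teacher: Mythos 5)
Your plan tracks the paper's strategy at every major step: use the Geer--Patureau tensor product rule to build a planar Bratteli diagram on $\Z^2$, read off $\dim LG_{n+1}(\alpha) = \sum_\lambda m_\lambda^2$ from the multiplicity-free isotypic decomposition, and identify this squared sum with a known count of pairs of lattice paths. So in broad outline this is the same proof. The gap is in the final step, which you describe as ``classical'' but which is in fact the heart of the argument, and I would also push back on your claim that the principal obstacle is the middle (planar-model) step: that part follows essentially for free once the four-term decomposition of $V(0,\alpha)\otimes V(k,\beta)$ (each summand with multiplicity one) is in hand. Two issues with the final step as you have sketched it. First, the closed-walk framing needs care: the step set $\{(0,0),(1,0),(0,1),(-1,1)\}$ is not closed under negation and the constraint that the $(-1,1)$ step is only allowed when $x>0$ does not survive reversal, so ``closed admissible walks'' has to be read as ``out by admissible steps, back by reversed steps,'' which is just the pair-of-paths count again and buys nothing. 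Second, the statement that ``the four-step set should decompose as a product of two ballot step sets'' is precisely what must be proved, and neither LGV nor the reflection principle produces it automatically. The paper makes it precise via an explicit bijection
$$f\bigl((x_1,y_1),(x_2,y_2)\bigr) = (y_2-y_1,\,x_2),$$
sending a non-crossing pair of monotone (up/right) paths, recorded coordinatewise, to a single four-move path: the four combinations of paired steps $\bigl((0,1),(0,1)\bigr)$, $\bigl((1,0),(1,0)\bigr)$, $\bigl((1,0),(0,1)\bigr)$, $\bigl((0,1),(1,0)\bigr)$ map to the moves $(0,0)$, $(0,1)$, $(1,0)$, $(-1,1)$ respectively, and the non-crossing condition $y_1\le y_2$ becomes exactly the quadrant constraint $x\ge 0$. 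Summing the squares over endpoints then matches the count of pairs of disjoint monotone paths from $(0,1)\to(n,n+1)$ and $(1,0)\to(n+1,n)$ in an $(n+1)\times(n+1)$ square, whose cardinality is the known $\frac{(2n)!(2n+1)!}{(n!(n+1)!)^2}$. So the outline is sound, but the product decomposition of the step set needs an explicit construction rather than an appeal to generality; with the bijection above, the argument closes.
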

The main result of this paper is the proof if this conjecture. We use combinatorial techniques in order to encode the semi-simple decompositions of tensor powers of the representation $V(0,\alpha)$. 

The first step consists in a mixture of algebraic and combinatorial tools, permitting to pass from the initial algebraic question, concerning the computation of this dimension of the algebra, to a purely combinatorial problem.
More specifically, we construct certain diagrams in the lattice with integer coordinates on the plane, where we assign to each point a certain weight (which is a natural number). This encodes the dimension of a corresponding intertwiner space in $V(0,\alpha)^{\otimes n}$. Inductively, we show that the intertwiner spaces can be described using a set of paths in the plane with prescribed moves. 
 
Let us consider the following set of planar moves:
$$M1) \ \ \ \ \ \ \ \text{stay  move} \ \ \ \ \ (x',y')\rightarrow (x',y') \ \ \ \ \ $$
$$M2) \ \ \ \ \ \  \ \longrightarrow  \ \ \ \ \ \ \ \ \ \ \ \ (x',y')\rightarrow (x'+1,y')$$
$$M3) \ \ \ \ \ \ \ \ \ \uparrow  \ \ \ \ \ \ \ \ \ \ \  \ \ (x',y')\rightarrow (x',y'+1) $$
$$ \ \ \ \ \ \ \ \ \ \ \ \ \ \ \ M4)  \ \ \ \ \ \ \ \ \nwarrow  \ \ \ \ \ \ \ \ \ \ \ \ (x',y')\rightarrow (x'-1,y'+1) \text{ if } x'>0.$$
The main point of this part leads to the following combinatorial description for the isotypic components that occur in the decomposition of the tensor powers of $V(0,\alpha)$.
\begin{theorem}(Combinatorial model for the isotypic components corresponding to $V(0,\alpha)^{\otimes n}$)\label{Thm1}

Let $\alpha \in \C \setminus \Q $. For any $n \in \N$, let us consider the semi-simple decomposition:
$$V(0,\alpha)^{\otimes n}=\bigoplus_{x,y\in \N\times \N} \ \left( \ T_n(x,y)\otimes V(x,k\alpha+y) \ \right).$$ Here, $T_n(x,y)$ is the multiplicity space  corresponding to the weight $(x,n\alpha+y) \in \N \times \C$.\\  
Then, there is the following one to one correspondence:
$$ T_{n+1}(x,y) \longleftrightarrow \{ \sigma  \subseteq \R^2 \mid \sigma \text{ is a path of length n, in the first quadrant }  $$
$$ \ \ \ \ \ \ \ \ \ \ \ \ \ \ \ \  \text{ between the points } (0,0) \text{ to } (x,y) $$
$$ \text{ \ \ \ \  \ \ \ \ \ \ \ \ \ \ \ \ \ \ \ \ \ \ \ \ \ \ \ \ \ \ \ \ \ constructed with the moves } M_1, M_2, M_3 \text{ or} \  M_4 \}. $$
\end{theorem}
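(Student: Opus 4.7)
\medskip

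\noindent\textbf{Proof proposal.} The natural approach is induction on $n$, reducing the statement to a single tensor product rule of the form $V(x, m\alpha + y) \otimes V(0,\alpha) \cong \bigoplus_{(x',y')} V(x', (m+1)\alpha + y')$, where the four summands on the right-hand side correspond exactly to the four planar moves $M_1, M_2, M_3, M_4$ listed above. The base case is immediate: for $n=0$ we have $V(0,\alpha)^{\otimes 1} = V(0,\alpha)$, so $T_1(0,0) = \mathbb{C}$ and $T_1(x,y) = 0$ otherwise, which matches the fact that the only path of length $0$ starting at $(0,0)$ is the trivial path ending at $(0,0)$.

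For the inductive step, I would assume the bijection $T_{n+1}(x,y) \longleftrightarrow \{\text{paths of length } n \text{ from } (0,0) \text{ to } (x,y)\}$ and use the identification
\[
V(0,\alpha)^{\otimes (n+2)} \;=\; V(0,\alpha)^{\otimes (n+1)} \otimes V(0,\alpha) \;=\; \bigoplus_{(x,y)} T_{n+1}(x,y) \otimes V(x, (n+1)\alpha+y) \otimes V(0,\alpha).
\]
Applying the tensor product rule mentioned above to each summand $V(x, (n+1)\alpha+y) \otimes V(0,\alpha)$, collecting isotypic components and using Schur's Lemma (valid because $\alpha \notin \mathbb{Q}$ ensures the weights $V(x', (n+2)\alpha+y')$ are pairwise non-isomorphic), I obtain the recursion
\[
T_{n+2}(x',y') \;\cong\; \bigoplus_{(x,y) \,\to\, (x',y')} T_{n+1}(x,y),
\]
where the sum runs over all $(x,y)$ from which one of the moves $M_1, M_2, M_3, M_4$ reaches $(x',y')$. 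On the combinatorial side, a path of length $n+1$ from $(0,0)$ to $(x',y')$ is obtained by appending one of these four moves to a path of length $n$ ending at some predecessor $(x,y)$, and the restriction $x'>0$ in $M_4$ exactly matches the first-quadrant requirement. The bijection then extends by the inductive hypothesis.

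The main obstacle, and the real content of the proof, is establishing the four-term tensor product decomposition of $V(x, m\alpha+y) \otimes V(0,\alpha)$. For generic $\alpha$, I would proceed by identifying highest-weight vectors in the tensor product: compute the weight space structure of $V(x, m\alpha+y)$, enumerate the weights $(x'-x, \text{shift})$ that could give rise to a highest-weight vector upon tensoring with $V(0,\alpha)$, and show that there are exactly four candidate weights, matching the four displacements $(0,0)$, $(+1,0)$, $(0,+1)$, $(-1,+1)$. The constraint $x > 0$ for move $M_4$ will arise naturally because the would-be highest-weight vector requires subtracting a unit from the first coordinate, which is impossible when $x=0$ (the corresponding vector vanishes). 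Generic genericity of $\alpha$ ensures complete reducibility and that all four summands are distinct and simple.

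A subtlety worth flagging: one must verify that no further summands appear and that multiplicities are all one, i.e., the decomposition is multiplicity-free. This should follow from a dimension count, $\dim V(x, m\alpha+y) \cdot \dim V(0,\alpha) = 4(x+1)\cdot 4 = \sum_{(x',y')} 4(x'+1)$, where the sum is over the four (or three, when $x=0$) target representations, together with the weight argument above. Once this tensor product rule is in hand, the induction closes cleanly and yields the claimed bijection.
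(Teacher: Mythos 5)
Your approach is essentially the same as the paper's: both argue by induction, using the branching rule for $V(x,m\alpha+y)\otimes V(0,\alpha)$ into four (or three, when $x=0$) typical summands, observing that the resulting multiplicity recursion is exactly the one generated by the moves $M_1,\dots,M_4$ (with $M_4$ suppressed when $x=0$), and closing via semisimplicity, which holds because $\alpha\notin\Q$ keeps all the weights $(x,k\alpha+y)$ typical. The one place where your proposal diverges is that you present the four-term tensor-product decomposition as ``the main obstacle, and the real content of the proof,'' to be established via a highest-weight-vector and dimension analysis. The paper does not prove this decomposition at all: it is cited directly as Lemma~1.3 of Geer--Patureau (stated in the paper as Theorem~\ref{Thm}), so the actual new content of the paper's argument is only the bookkeeping, phrased there as ``blow-up'' operations $B(x,y)$ on a weighted planar diagram $D(n)$ -- which is precisely your recursion $T_{n+2}(x',y')\cong\bigoplus_{(x,y)\to(x',y')}T_{n+1}(x,y)$ in pictorial form. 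Your sketched re-derivation of the branching rule is plausible and the dimension count $16(x+1)=4(x+1)+4(x+2)+4x+4(x+1)$ does balance, so nothing is wrong; you have simply reproved a cited input rather than invoking it.
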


On the other hand, it was previously known from combinatorics that the conjectured dimension $$\frac{(2n)!(2n+1)!}{(n!(n+1)!)^2}$$ is the result of a counting problem of certain pairs of paths in the plane.
The second part of the paper concerns a combinatorial problem, where we make the correspondence between two sets of planar paths. We establish a bijection between the previous set of paths-that characterise the intertwiner spaces and this set of pairs of paths, that occur in the description of the right hand side of the conjectured formula. This interpretation leads to the desired conjectured dimension and emphasises combinatorial properties of the image of the morphism $\rho_n(\alpha)$. 

In their paper, Marin and Wagner investigated further the morphism $\rho_n(\alpha)$. They showed that it factors through a cubic Hecke algebra denoted by $H(\alpha)$. Moreover, they considered certain relations that are in the kernel of this map: $r_2$ for three strands and $r_3$ for the braid group with four strands. In this way, they defined a smaller quotient of the cubic Hecke algebra $A_n(\alpha)$, and they conjectured that this is enough in order to generate the kernel of $\rho_n(\alpha)$:
\begin{conjecture}(Marin-Wagner \cite{MW})\label{conj2}
 Consider the following quotient of the cubic Hecke algebra:
 $$A_n(\alpha):=H_n(\alpha)/(r_2,r_3).$$
 Then, for any $n \in \N$, there is the isomorphism of algebras:
 $$A_n(\alpha)\simeq LG_n(\alpha).$$  
 \end{conjecture}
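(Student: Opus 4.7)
The plan is to establish the claimed isomorphism by promoting the known surjection $A_n(\alpha) \twoheadrightarrow LG_n(\alpha)$ into a dimension-matching statement. Marin--Wagner have shown that the braid group representation $\rho_n(\alpha)$ is surjective and factors through the cubic Hecke algebra $H_n(\alpha)$; since the relations $r_2$ and $r_3$ lie in $\ker \rho_n(\alpha)$ by design, there is a well-defined surjection $\pi_n : A_n(\alpha) \twoheadrightarrow LG_n(\alpha)$. Combined with the dimension theorem proved in this paper, it therefore suffices to prove the upper bound
\[
\dim_{\Bbbk} A_n(\alpha) \;\leq\; \frac{(2n-2)!(2n-1)!}{\big((n-1)!\,n!\big)^2},
\]
with the indexing convention of the main theorem. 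Injectivity of $\pi_n$ will then follow automatically from surjectivity.

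My approach to the upper bound is to construct, for each $n$, an explicit combinatorial spanning set of $A_n(\alpha)$ indexed by the same objects that label the matrix unit basis of $LG_n(\alpha)$, namely pairs of lattice paths built from the moves $M_1$--$M_4$. Concretely, I would translate each elementary path step into a specific word in the cubic generators $\sigma_i^{\pm 1}$, so that a pair of paths yields a canonical element of $A_n(\alpha)$. The task then reduces to showing that every monomial in $A_n(\alpha)$ can be rewritten, using only the cubic relation together with $r_2$ and $r_3$, as a linear combination of these canonical elements.

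To carry out this rewriting I would argue by induction on $n$, aiming at a vector-space decomposition $A_n(\alpha) = A_{n-1}(\alpha) \cdot \mathcal{W}_n$, where $\mathcal{W}_n$ is a set of coset representatives involving $\sigma_{n-1}$ in a controlled way, in analogy with the Jucys--Murphy style reduction used for Hecke and BMW algebras. Counting $|\mathcal{W}_n|$ should give precisely the ratio between consecutive super-Catalan numbers, and combining this with the inductive hypothesis yields the required bound. An alternative incarnation of the same program is to let $A_n(\alpha)$ act on the lattice-path module of Theorem \ref{Thm1} and verify that this action is faithful, so that the dimension of $A_n(\alpha)$ is pinned down directly by the combinatorial model already built.

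The principal obstacle is the interaction of the cubic relation with $r_2$ and $r_3$. Unlike the Hecke or BMW settings, each application of the cubic relation $\sigma_i^3 = \cdots$ splits one monomial into up to three, so a naive rewriting strategy does not terminate. One therefore needs a confluence (diamond-lemma) argument, or equivalently a proof that $r_2$ (on three strands) and $r_3$ (on four strands) are powerful enough to absorb all the extra terms produced by the cubic relation. Verifying this confluence, or equivalently checking that the tentative spanning set indexed by pairs of lattice paths is closed under the natural left and right multiplications, is where I would expect the real work of the proof to concentrate.
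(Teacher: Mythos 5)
The statement you are attempting to prove is, in the paper, explicitly presented as a \emph{conjecture} (Conjecture~\ref{conj2}), not as a theorem. The paper proves the Marin--Wagner dimension formula (Theorem~\ref{conj}), which is a genuinely different claim, and then \emph{states} Conjecture~\ref{conj2} as an open problem that the dimension result does not by itself settle. There is therefore no ``paper proof'' for your argument to be compared against; the paper simply records the surjection $A_n(\alpha)\twoheadrightarrow LG_n(\alpha)$ and leaves injectivity open.

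Your reduction is nonetheless the right one: combining the surjection $\pi_n\colon A_n(\alpha)\twoheadrightarrow LG_n(\alpha)$ with the paper's dimension formula, it suffices to prove $\dim_{\Bbbk} A_n(\alpha)\le \dim_{\Bbbk} LG_n(\alpha)$. But this is precisely where your proposal stops being a proof. You outline three possible mechanisms for the upper bound (a coset decomposition $A_n = A_{n-1}\cdot\mathcal W_n$ in Jucys--Murphy style, a diamond-lemma/confluence argument for the rewriting system generated by the cubic relation together with $r_2,r_3$, and a faithfulness argument for an action on the lattice-path module of Theorem~\ref{Thm1}), yet none of these is carried out, and you yourself identify the non-termination of cubic rewriting as ``the real work.'' That work \emph{is} the conjecture. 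In particular, nothing in the paper's combinatorial machinery gives you, without further input, a spanning set of $A_n(\alpha)$ of the required size: the bijections of Sections~\ref{sec5}--\ref{sec6} live entirely on the $LG_n(\alpha)$ side and never descend to a statement about the abstract quotient $H_n(\alpha)/(r_2,r_3)$ before applying $\rho_n(\alpha)$. As written, your proposal is a plausible program, not a proof; the decisive step --- showing that $r_2$ and $r_3$ generate $\ker\tilde\rho_n(\alpha)$ for all $n$, equivalently bounding $\dim A_n(\alpha)$ from above --- is missing, and is exactly what the paper leaves open.
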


 On the representation theory part of the story, the question of finding matrix unit bases for centraliser algebras has been studied for some important algebras and it is related to the theory of quantum invariants for knots. On this subject, Wenzl (\cite{Wz}) and Ram and Wenzl (\cite{RW}) described a matrix unit basis for Brauer's centraliser algebras and for the Hecke algebras of type A. Moreover, in \cite {BB}, Blanchet and Beliakova described a precise basis of matrix units for the Birman-Murakami-Wenzl algebra, using idempotent elements and skein theory.  In 2006, Lehrer and Zhang (\cite{LeZa}) studied the cases where the morphism obtained from the group algebra of the braid group onto the automorphism group of the tensor power of a certain representation, given by infinitesimal actions is surjective.

 Related to this question, in the last section of the paper (section \ref{Sec7}), we describe a matrix unit basis for the centraliser algebra $LG_n$.
This model is established using Theorem \ref{Thm1} and gives combinatorial counterparts for the projectors onto the simple components that occur in the semi-simple decompositions of tensor powers of $V(0, \alpha)$.
 
 \begin{notation} 
 
 Consider $\Delta_n \subseteq \R^{2}$ to be the standard simplex with edge of length $n$.\\
Let us denote by: 
$$S_{n+1}^{ref}(a,b):= \{ \sigma \subseteq \R^2 \mid \sigma \text{ is a simple closed curve containing } \left(0,0\right) \text{ and } \left(n+1,n+1\right), $$
$$ \ \ \ \ \ \ \ \ \ \ \ \ \ \ \ \ \ \ \  \ \ \ \ \ \ \ \ \ \ \text{ constructed with the moves } M_2 \text{ or } M_3, \text{ which is symmetric}$$
$$ \ \ \ \ \ \ \ \ \ \ \ \ \ \ \ \ \ \ \ \ \ \text{with respect to the secondary diagonal,}$$
$$ \ \ \ \ \ \ \ \ \ \ \ \ \ \ \ \ \ \ \ \ \ \ \ \ \ \ \ \ \ \ \ \ \ \  \  \text{and cut it into two points:} { \ (n+1-a,a) \text{ and }  (n+1-b,b) \} }.$$
\end{notation}

\begin{theorem}\label{Thm3}

1) (Matrix unit basis for $LG_{n+1}(\alpha)$)

There is an algebra isomorphism which makes the correspondence between a basis of projectors onto simple components from $LG_{n+1}(\alpha)$, and an algebra of elementary matrices indexed by symmetric planar curves:
$$LG_{n+1}(\alpha) \ \simeq \ < \sigma \subseteq \R^2 \mid \sigma \text{ is a simple closed curve}$$
$$ \ \ \  \ \ \ \ \ \  \ \ \ \ \ \ \ \ \ \ \ \  \ \ \ \ \ \ \ \ \ \ \ \ \ \ \ \ \ \ \ \ \ \text { containing } \left(0,0\right) \text{ and } \ \left(n+1,n+1\right) $$
$$ \ \ \ \ \ \ \ \ \ \ \ \ \ \ \ \ \ \ \ \ \ \ \ \ \ \ \ \ \ \ \ \ \ \ \ \ \ \ \ \ \ \ \ \ \ \ \ \ \ \ \ {\text{ constructed with the moves } M_2 \text{ or } M_3 >_{\C}}^{\ast} .$$
2) (Identification of projectors onto isotypic components ) 

For any natural number $n \in \N$ and any point $(x,y)\in \Delta_n$, we denote by $p^{n+1}_{x,y}\in LG_{n+1}(\alpha)$ the projector onto the isotypic component of $V\big(x,(n+1)\alpha+y\big)$ inside $V(0, \alpha)^{n+1}$. 

Then, there is a one to one correspondence between this set of projectors and the following family of symmetric planar curves, defined as follows:
$$p^{n+1}_{x,y} \ \ \ \ \ \ \ \longleftrightarrow \sum_{\sigma \in S_{n+1}^{ref}(x+y+1,y)} \sigma^{\star}, \ \ \ \ \ \ \ \ \ \  \forall (x,y)\in \Delta_n.$$

\end{theorem}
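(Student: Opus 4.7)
The plan is to combine Theorem \ref{Thm1} with the Artin-Wedderburn decomposition of $LG_{n+1}(\alpha)$, and then transport the resulting matrix-unit basis through an explicit combinatorial bijection between ordered pairs of lattice paths and symmetric closed planar curves. Since $\alpha\in\C\setminus\Q$ makes the ambient category semi-simple, one has
$$LG_{n+1}(\alpha)\ \simeq\ \bigoplus_{(x,y)\in \Delta_n}\mathrm{End}_{\C}\bigl(T_{n+1}(x,y)\bigr),$$
and Theorem \ref{Thm1} equips each $T_{n+1}(x,y)$ with a canonical basis indexed by paths of length $n$ from $(0,0)$ to $(x,y)$ in the first quadrant with moves $M_1,M_2,M_3,M_4$. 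Let $E_{\sigma_1,\sigma_2}$ denote the matrix unit attached to an ordered pair of such paths; then $E_{\sigma_1,\sigma_2}E_{\sigma_3,\sigma_4}=\delta_{\sigma_2,\sigma_3}E_{\sigma_1,\sigma_4}$, and by semi-simplicity the projector onto the isotypic component of $V\bigl(x,(n+1)\alpha+y\bigr)$ is the identity of the corresponding block, i.e.\ $p^{n+1}_{x,y}=\sum_{\sigma\colon(0,0)\to(x,y)}E_{\sigma,\sigma}$.

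The heart of the proof is to construct, for each $(x,y)\in\Delta_n$, a bijection
$$\Phi\ :\ \bigl\{(\sigma_1,\sigma_2)\ :\ \sigma_1,\sigma_2\ \text{paths}\ (0,0)\to(x,y)\bigr\}\ \longrightarrow\ S_{n+1}^{ref}(x+y+1,y),$$
together with a star product $\star$ on the $\C$-span of symmetric planar curves so that $\Phi$ intertwines matrix-unit multiplication with $\star$. I would build $\Phi$ by first straightening each $\{M_1,M_2,M_3,M_4\}$-path into a monotone $\{M_2,M_3\}$-path running from $(0,0)$ into the secondary diagonal $\{x+y=n+1\}$: the change of variables $(u,v)=(x+y,y)$ turns the four moves into $\{(0,0),(1,0),(1,1),(0,1)\}$-steps, which can then be refined to pure $\{M_2,M_3\}$-steps of prescribed total length. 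An ordered pair $(\sigma_1,\sigma_2)$ then yields two monotone half-paths that, together with their reflections across the secondary diagonal, are assembled into a single simple closed curve passing through $(0,0)$ and $(n+1,n+1)$, with the two diagonal crossings forced by construction to land at the lattice points $(n-x-y,x+y+1)$ and $(n+1-y,y)$, exactly matching the defining data of $S_{n+1}^{ref}(x+y+1,y)$.

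Granted $\Phi$ and the compatibility $\Phi(E_{\sigma_1,\sigma_2})\star\Phi(E_{\sigma_3,\sigma_4})=\delta_{\sigma_2,\sigma_3}\,\Phi(E_{\sigma_1,\sigma_4})$, part 1) of the theorem is just the transport of the Artin-Wedderburn isomorphism along $\Phi$: matrix units map to symmetric closed curves, and matrix-unit multiplication maps to the combinatorial gluing of two symmetric curves along their matching inner halves. Part 2) is then immediate, since $p^{n+1}_{x,y}=\sum_{\sigma}E_{\sigma,\sigma}$ summed over paths to $(x,y)$, and the diagonal pairs $(\sigma,\sigma)$ are sent by $\Phi$ to precisely the elements of $S_{n+1}^{ref}(x+y+1,y)$, giving the claimed identity $p^{n+1}_{x,y}\leftrightarrow\sum_{\sigma\in S_{n+1}^{ref}(x+y+1,y)}\sigma^\star$.

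The main obstacle will be the construction and verification of $\Phi$ itself: one must carefully check that the straightening/symmetrisation procedure produces a \emph{simple} closed curve (no self-intersections arising from the interplay of the two half-paths and their reflections), that its two secondary-diagonal crossings fall at the claimed lattice points for \emph{every} pair $(\sigma_1,\sigma_2)$, and that the star product on the span of curves faithfully encodes matrix-unit multiplication under $\Phi$. Once these combinatorial facts are secured, the theorem follows formally from the Artin-Wedderburn decomposition.
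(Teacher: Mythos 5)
Your high-level plan (Artin--Wedderburn decomposition of $LG_{n+1}(\alpha)$, index each block's matrix units by ordered pairs of paths via Theorem \ref{Thm1}, then transport through a combinatorial bijection to closed planar curves) is exactly the paper's strategy. However, there is a genuine cardinality error at the heart of your construction of $\Phi$. You claim a bijection
$$\Phi:\{(\sigma_1,\sigma_2):\sigma_1,\sigma_2\in P_{n+1}(x,y)\}\longrightarrow S^{ref}_{n+1}(x+y+1,y),$$
but the left-hand set has cardinality $t_{n+1}(x,y)^2$ while $S^{ref}_{n+1}(x+y+1,y)$ has only $t_{n+1}(x,y)$ elements: a symmetric closed curve is determined by its lower half, which is a single element of $C^{\Delta}_{n+1}(x+y+1,y)\simeq P_{n+1}(x,y)$. (For instance $t_3(0,1)=3$, $t_4(1,1)=8$, so the mismatch is real.) The target of $\Phi$ must be the \emph{full} set $S_{n+1}(x+y+1,y)$ of closed curves cutting the secondary diagonal at the two prescribed lattice points; only the diagonal pairs $(\sigma,\sigma)$ land in $S^{ref}_{n+1}(x+y+1,y)$, and those are precisely the curves appearing in the sum for $p^{n+1}_{x,y}$ in part 2).

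The source of the error is in how you straighten. You propose to send a single $\{M_1,\dots,M_4\}$-path $\sigma$ to a \emph{single} monotone $\{M_2,M_3\}$-path via $(u,v)=(x+y,y)$, and then glue two such monotone paths to their reflections. But that inevitably produces a curve whose upper half is the reflection of its lower half --- i.e.\ always a symmetric curve --- and moreover $(\sigma_1,\sigma_2)$ and $(\sigma_2,\sigma_1)$ would give the same curve, so the map is neither injective nor does it reach the non-symmetric curves needed for the off-diagonal matrix units. What the paper does instead (Theorem \ref{3} together with Proposition \ref{CD}) is send a \emph{single} path $\tau\in P_{n+1}(x,y)$ to a \emph{pair} of non-crossing monotone paths: the inverse of $f^{n+1}_{(x,y)}\bigl((x_1,y_1),(x_2,y_2)\bigr)=(y_2-y_1,x_2)$ reconstructs two paths $C_1,C_2$, which after the shift of Proposition \ref{CD} land in $C^{\Delta}_{n+1}(x+y+1,y)$. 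This pair forms the \emph{lower} half of a closed curve. A matrix unit $E_{\tau_1,\tau_2}$ then maps to the curve whose lower half is $\mathscr F^{n+1}_{(x,y)}(\tau_1)$ and whose upper half is the reflection $\bar{\mathscr F}^{n+1}_{(x,y)}(\tau_2)$; this is symmetric exactly when $\tau_1=\tau_2$. With that correction your transport argument and your derivation of part 2) from $p^{n+1}_{x,y}=\sum_\tau E_{\tau,\tau}$ go through as intended.
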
  

Our aim for a future paper is to use this combinatorial model for a matrix unit basis for the Links-Gould centraliser algebra $LG_n$, in order understand the morphism $\rho_n(\alpha)$.


\

 {\bf{Structure of the paper}}:

This paper is split into six main parts. In Section \ref{sec2}, we present the super quantum group $U_q(sl(2|1))$ and some properties about its representation theory.
Then, in Section \ref{sec3} we review the definition of the Links-Gould invariant. Section \ref{sec4} is devoted to the definition of the centraliser algebras $LG_n(\alpha)$ and certain conjectures about them. Then, in Section \ref{sec5}, we discuss a combinatorial model for the isotypic components of $V(0,\alpha)^{n}$. Section \ref{sec6} is devoted to the proof of the Marin-Wagner conjecture about the dimensions of these centraliser algebras.  In section \ref{sec7'} we put together all the steps of the proof and summarise a concrete algorithm that makes the connection between multiplicity spaces in $V(0,\alpha)^{\otimes n}$ and pairs of paths in the simplex. In the last part, Section \ref{Sec7}, we give a combinatorial matrix unit basis for the algebra $LG_{n}$.  

\

{\bf{Acknowledgements}}: 
 I would like thank very much to my advisor, Professor Christian Blanchet for suggesting this problem and for many discussions, references and helpful advice. Also, I want to thank Professor Frédéric Chapoton for pointing me the references \cite{A} and \cite{S}.
The first version of this paper, prepared during my PhD, was supported by grants from Région Ile-de-France. For this second version, I acknowledge the support from the European Research Council (ERC) under the European Unions Horizon 2020 research and innovation programme (grant agreement No 674978).
\section{The quantum group $U_q(sl(2|1))$}\label{sec2}
In this section we will introduce the super-quantum group $U_q(sl(2|1))$ and discuss about its representation theory. Let us fix a ground ring $\Bbbk$.  A super vector space is a vector space over $\Bbbk$ with a $\Z_2$ grading: $V=V_{0} \oplus V_{1}$. A homogenous element $x \in V$ is called even if $x \in V_{0}$ and odd if $x \in V_{1}$. Through this section, all the objects that we will work with will respect the $\Z_2$-gradings.
\begin{notation} Consider $\hslash$ to be an indeterminate and the field $\Bbbk:=\C ((\hslash))$. Denote by:
$$q:=e^{\frac{\hslash}{2}} \ \ \ \ \ \  \ \ \ \ \ \ \ \ \ \ \{x\}:=q^{x}-q^{-x}$$ 
$$exp_q(x):= \sum_{n=0}^{\infty} \frac{x^n}{(n)_q!} \ \ \ \ \ \ \ \ \ \ \ \ (k)_q=\frac{1-q^k}{1-q} \ \ \ \ \ \ \ \ \ \ (n)_q!=(1)_q (2)_q...(n)_q.$$
For two homogeneous elements $x$ and $y$ with gradings $\bar{x}, \bar{y}\in \Z_2$, the super-commutator has the formula:
$$[x,y]:=xy- (-1)^{\bar{x}\bar{y}}yx.$$  

\end{notation} 
\begin{definition}
Let $A=(a_{ij})$ be the square matrix given by
$A=
\begin{pmatrix} 
 \ 2  &-1\\
-1 & 0 
\end{pmatrix}
$.\\
Consider $U_q(sl(2|1))$ to be the superalgebra over $\C((\hslash))$ generated by 
$\{E_i,F_i,h_i\}_{i \in \{ 1,2\} }$ where $E_2$ and $F_2$ are odd and all the others generators are even, with the following relations:\\  
\begin{align*}
 [h_i,h_j]&=0 , &  [h_i,E_j]=& a_{i,j} E_j  , &  [h_i,F_j]=& -a_{i,j} F_j,
\end{align*}
\begin{align*}
[E_{i},F_{j}]=&\delta_{i,j}\frac{q^{h_i}-q^{-h_i}}{q-q^{-1}}, & E_2^2=&F_2^2=0
 \end{align*}
\begin{align}
\label{E:QserreA2}
  E_{1}^{2}E_{2}-(q+q^{-1})E_{1}E_{2}E_{1}+E_{2}E_{1}^{2}&=0, &
  F_{1}^{2}F_{2}-(q+q^{-1})F_{1}F_{2}F_{1}+F_{2}F_{1}^{2}&=0.
\end{align}

The algebra $U_q(sl(2|1))$ is a Hopf algebra where the coproduct, counit and
antipode are defined by
\begin{align*}
 \Delta(E_{i})= & E_{i}\otimes 1+ q^{-h_i} \otimes E_{i}, &   \epsilon(E_{i})= & 0 & S(E_{i})=&-q^{h_{i}}E_{i}\\
 \Delta (F_{i}) = & F_{i}\otimes q^{h_{i}}+ 1 \otimes F_{i}, &   \epsilon(F_{i})= &0 & S(F_{i})=&-F_{i} q^{-h_{i}}\\
   \Delta(h_i) = & h_{i} \otimes 1 + 1\otimes h_{i}, & \epsilon(h_{i})  = & 0 & S(h_{i})= &-h_{i}.
 \end{align*}
\end{definition} 
\begin{notation}
Consider the following elements:
$$E':= E_1E_2-q^{-1}E_2E_1 \ \ \ \ \ F'=F_2F_1-qF_1F_2.$$
\end{notation}

\begin{proposition}{\label{Rmatrix}}
In \cite{KT},\cite{Y} it has been shown that the quantum group $U_q(sl(2|1))$ admits a universal $R$-matrix $R \in U_q(sl(2|1)) \hat{\otimes}U_q(sl(2|1))$.
Consider the following expressions:  
$$\check{R}=exp_q \left( \{ 1 \} E_1 \otimes F_1 \right) exp_q \left( - \{ 1 \} E' \otimes F' \right) exp_q \left( - \{ 1 \} E_2 \otimes F_2 \right) $$
$$K=q^{-h_1 \otimes h_2-h_2 \otimes h_1-2 h_2 \otimes h_2}.$$
Then $R=\check{R}K$.
\end{proposition}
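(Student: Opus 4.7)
The plan is to verify, for the explicit element $R = \check{R} K$, the three axioms of a universal $R$-matrix: the intertwining relation $\Delta^{\mathrm{op}}(x) R = R \Delta(x)$ for every $x \in U_q(sl(2|1))$, together with the hexagon identities $(\Delta \otimes \mathrm{id})(R) = R_{13} R_{23}$ and $(\mathrm{id} \otimes \Delta)(R) = R_{13} R_{12}$. The natural framework is the Khoroshkin-Tolstoy multiplicative construction, in which $R$ factors into a Cartan-diagonal piece and a product of $q$-exponentials indexed by positive root vectors, taken in a convex normal order.

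First I would identify the positive root data of $sl(2|1)$. With simple roots $\alpha_1$ (even) and $\alpha_2$ (odd), the positive roots are $\alpha_1$, $\alpha_1+\alpha_2$ and $\alpha_2$, with parities even, odd, odd. The associated Cartan-Weyl generators on the upper side are $E_1$, the quantum commutator $E'$, and $E_2$, and their duals $F_1$, $F'$, $F_2$ on the lower side. The required $q$-commutation relations among them follow from the Serre identities \eqref{E:QserreA2} together with $E_2^2 = F_2^2 = 0$. The ordering $\alpha_1, \alpha_1+\alpha_2, \alpha_2$ appearing in $\check{R}$ is a convex order on the positive roots, which is precisely the condition that makes the product over roots well-defined.

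Next I would determine $K$ by demanding that conjugation by $K$ on $E_i \otimes 1$ and $1 \otimes F_i$ produce the weight shifts needed for each $q$-exponential factor to solve the intertwining condition locally. This reduces to inverting the symmetrised Cartan matrix; together with the super-vanishing $a_{22}=0$, it forces the exponent to be $-h_1 \otimes h_2 - h_2 \otimes h_1 - 2 h_2 \otimes h_2$, matching the stated $K$. For each positive root $\beta$ one then checks that the corresponding $q$-exponential factor carrying the sign $\epsilon_\beta = +1$ if $\beta$ is even and $\epsilon_\beta = -1$ if $\beta$ is odd exactly absorbs the remaining discrepancy between $\Delta(E_\beta)$ and $\Delta^{\mathrm{op}}(E_\beta)$. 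The intertwining relation on the generators $E_i, F_i, h_i$ then suffices by multiplicativity, and the hexagon identities follow from the super-version of $\Delta(\mathrm{exp}_q(x)) = \mathrm{exp}_q(\Delta(x))$ when the argument has the required $q$-commutation properties, together with the additivity of the Cartan exponent of $K$ under $\Delta$.

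I expect the main obstacle to be the super-sign bookkeeping and the cross-terms produced when commuting the three $q$-exponentials past each other and past $K$. The key technical identities are products of the form $\mathrm{exp}_q(a)\mathrm{exp}_q(b) = \mathrm{exp}_q(a+b)$ under $ba = q\,ab$, adapted to the super-setting so that the odd root vectors $E_2, F_2, E', F'$ anticommute appropriately. Once these manipulations are carried out in the convex normal order, all cross-terms either cancel or reassemble into the factors stated in the proposition, and the three axioms hold simultaneously, recovering the Khoroshkin-Tolstoy and Yamane result invoked here.
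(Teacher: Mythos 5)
The paper does not prove this proposition; it is stated purely as a recollection of results from the cited references \cite{KT} and \cite{Y}, so there is no in-paper proof for your argument to be compared against. That said, your outline does correctly reproduce the strategy of the Khoroshkin--Tolstoy multiplicative construction that those references use: fix a convex normal order on the positive roots $\alpha_1,\ \alpha_1+\alpha_2,\ \alpha_2$ (with parities even, odd, odd); take $E_1$, the quantum commutator $E' = E_1E_2 - q^{-1}E_2E_1$, and $E_2$ as the corresponding Cartan--Weyl generators and $F_1, F', F_2$ as their duals; build $\check{R}$ as the ordered product of $q$-exponentials; and determine the Cartan factor $K$ from the inverse of the symmetrised Cartan matrix, which for $A=\begin{pmatrix}2&-1\\-1&0\end{pmatrix}$ indeed yields the exponent $-h_1\otimes h_2 - h_2\otimes h_1 - 2h_2\otimes h_2$. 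So the skeleton is sound.

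However, as submitted your argument is a plan rather than a verification. The crucial computations are named but not carried out: you do not establish the precise $q$-commutation relations among $E_1, E', E_2$ (and their $F$-counterparts) that make the ordered product of $q$-exponentials well defined; you do not state or prove the super-version of the identity $\exp_q(a)\exp_q(b)=\exp_q(a+b)$ under a $q$-commutation hypothesis, nor track the signs coming from $E_2, F_2, E', F'$ being odd; and the assertion that "all cross-terms either cancel or reassemble" when commuting the three $q$-exponentials past each other and past $K$ is precisely the content that needs to be checked. You correctly flag these as "the main obstacle," but flagging them does not discharge them. Given that the proposition is presented in the paper as a citation, a sketch of this depth is reasonable as an explanation of where the formula comes from, but it should not be read as an independent proof; a complete argument would have to fill in the listed computations or, more economically, simply defer to \cite{KT} and \cite{Y} as the paper does.
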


In this sequel we will concentrate on the representation theory of the super-quantum group $U_q(sl(2|1))$. We would like to emphasyse that the representation theory of super-quantum groups has continuous families of representations already when $q$ is generic. This is in contrast with the classical case of quantum groups, where it is possible to gain a continuous family of representations, just if $q$ is specialised to a root of unity. We will follow \cite{GP}.
\begin{definition}
1) An element $v \in V$ is called a weight vector of weight $\lambda=(\alpha_1,\alpha_2)$ if:
$$h_iv=\alpha_iv, \forall \ i\in \{1,2  \}.$$
2) A weight vector is called highest weight vector if:
$$E_i v =0, \forall i \in \{1,2\}. $$
3)A module $V$ is called a highest weight module of weight $\lambda=(\alpha_1,\alpha_2)$ if it is generated by a highest weight vector $v_0 \in V$ of weight $\lambda$.
\end{definition}
\begin{proposition}(Simple $U_q(sl(2|1))$ representations)

There exists a continuous family of simple representations of $U_q(sl(2|1))$,\\
indexed by $\Lambda=\N \times \C$:
$$ \lambda=(\alpha_1,\alpha_2) \in \Lambda \longleftrightarrow V(\alpha_1,\alpha_2) \text{ highest weight module of weight} \ \lambda$$ 
The weight $\lambda$ is called ``typical'' if $ \alpha_1+\alpha_2 \neq -1$ and $\alpha_2\neq 0$, otherwise it is called ``atypical''. 
Using this, the previous family splits into two types of representations. A highest weight module $V(\alpha_1,\alpha_2)$ is called typical/ atypical if its corresponding highest weight is typical/ atypical.
\end{proposition}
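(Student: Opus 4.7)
The plan is to follow the classical PBW--Verma strategy, carefully adapted to the super-setting where $E_2, F_2$ are odd and satisfy $E_2^2 = F_2^2 = 0$. First I would set up the triangular decomposition $U_q(sl(2|1)) \cong U_q^- \otimes U_q^0 \otimes U_q^+$, with $U_q^+$ generated by $E_1, E_2$, $U_q^-$ by $F_1, F_2$, and $U_q^0$ by $h_1, h_2$. Using the quantum Serre relation \eqref{E:QserreA2} together with $F_2^2 = 0$, a PBW-type basis for $U_q^-$ can be obtained from the ordered monomials in $F_1$, $F' = F_2 F_1 - q F_1 F_2$, and $F_2$; this is where the grading and Serre relations interact most delicately.

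For existence and uniqueness, for every $\lambda = (\alpha_1, \alpha_2) \in \C^{2}$ I would build the Verma module $M(\lambda) := U_q(sl(2|1)) \otimes_{U_q^{\geq 0}} \Bbbk_\lambda$, where $\Bbbk_\lambda$ is the one-dimensional $U_q^{\geq 0}$-module on which $h_i$ acts by $\alpha_i$ and $E_i$ by zero. The weight decomposition of $M(\lambda)$ has one-dimensional highest weight space, so standard arguments yield a unique maximal proper submodule $N(\lambda)$ and a simple highest weight quotient $V(\lambda) := M(\lambda)/N(\lambda)$; conversely, any simple highest weight module of weight $\lambda$ is isomorphic to $V(\lambda)$. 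The restriction $\alpha_1 \in \N$ then comes from requiring $V(\lambda)$ to be finite-dimensional: the subalgebra generated by $E_1, F_1, h_1$ is a copy of $U_q(sl_2)$, whose integrability condition forces $\alpha_1 \in \N$. No analogous constraint is imposed on $\alpha_2$, because $E_2^2 = F_2^2 = 0$ already makes the odd directions finite-dimensional for every $\alpha_2 \in \C$.

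For the typical/atypical split, I would analyse the Kac module obtained by first constructing the irreducible $U_q(sl_2) \otimes U_q^0$-module of highest weight $\lambda$ and then inducing along $U_q(sl(2|1))$; this module has dimension $4(\alpha_1 + 1)$. A direct weight computation identifies the candidate singular vectors for the odd direction as multiples of $F_2 v_0$ and of a combination of the form $F' F_1^{\alpha_1} v_0$; applying $E_1, E_2$ and using $[E_i, F_j] = \delta_{i,j}(q^{h_i} - q^{-h_i})/(q - q^{-1})$ shows that these vectors become singular precisely when $\alpha_2 = 0$ or $\alpha_1 + \alpha_2 + 1 = 0$ respectively, which defines the atypical locus. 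In the typical case the Kac module is itself simple and equals $V(\lambda)$; in the atypical case one must take a further quotient by the submodule generated by the singular vector. The main obstacle will be the explicit singular-vector identification: the $\Z_2$-signs from the super-commutator and the quantum coefficients from the Serre relations must be tracked carefully, and this is precisely the computation carried out in \cite{GP}, which I would invoke for the technical details.
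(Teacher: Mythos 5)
The paper itself gives no proof of this proposition: it is stated as a recalled fact immediately after the sentence ``We will follow \cite{GP}'', and the reader is implicitly referred to Geer--Patureau for the classification of simple highest-weight modules and the typical/atypical dichotomy. Your sketch reconstructs the standard Verma/Kac-module argument that underlies that reference, and it ends by invoking \cite{GP} for the delicate singular-vector computation, so in that sense your route and the paper's route are the same.

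The outline is sound: triangular decomposition with PBW monomials in $F_1$, $F'$, $F_2$ (the last two nilpotent of order two), Verma modules with unique simple quotients, the constraint $\alpha_1 \in \N$ coming from integrability of the $U_q(sl_2)$ subalgebra generated by $E_1, F_1, h_1$, and the fact that no integrality is forced on $\alpha_2$ because the odd directions are already finite. The atypicality conditions you derive, $\alpha_2 = 0$ and $\alpha_1 + \alpha_2 + 1 = 0$, are exactly the ones in the proposition. One small imprecision: your candidate for the second singular vector, $F' F_1^{\alpha_1} v_0$, sits at weight $\lambda - \alpha_1\cdot\alpha_1 - (\alpha_1+\alpha_2)$, whereas the singular vector governing the condition $\alpha_1 + \alpha_2 + 1 = 0$ should live at weight $\lambda - (\alpha_1 + \alpha_2)$, i.e.\ it is a combination of $F' v_0$ and $F_1 F_2 v_0$ rather than one involving $F_1^{\alpha_1}$. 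You flag the singular-vector identification as the delicate point and defer to \cite{GP}, which is appropriate, but be aware that the explicit vector you wrote down is not at the right weight. This does not affect the conclusion since the computation is being outsourced to the cited source anyway.
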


Notation: For any $V,W \in Rep(U_q(sl(2|1)))$, we denote by $$R_{V,W}\in  Aut_{U_q(sl(2|1))} \left( V \otimes W\right)$$ the isomorphism obtained by the action of $R$ (\ref{Rmatrix}) onto $V \otimes W$.

\begin{remark}(Braiding on the category of representations)

 The $R$-matrix will induce a braiding on the category of finite dimensional $U_q(sl(2|1))$-representations. For any $V,W \in Rep(U_q(sl(2|1)))$, it leads to an isomorphism $\mathscr{R}_{V,W}: V \otimes W \rightarrow W \otimes V$ defined in the following manner:
$$\mathscr{R}_{V,W}:=\tau^s \circ R_{V \otimes W}.$$
Here, $\tau^s: V \otimes W \rightarrow W \otimes V$ is the called super flip. If $x \in V$ and $y \in W$ are two homogeneous elements, then the super flip acts in the following way:
$$\tau^s(x \otimes y)=(-1)^{(deg x)(deg y)} y \otimes x.$$ 
\end{remark}
\begin{proposition}\label{braid}(Braid group action)

The braiding from the category of $U_q(sl(2|1))$-representations induces a well defined braid group action on tensor powers of $V(\lambda)$ for any $\lambda \in \Lambda$:
$$\rho_n: B_n \rightarrow Aut_{U_q(sl(2|1))}(V(\lambda)^{\otimes n})$$
$$\rho_n(\sigma_i^{\pm 1})=Id^{\otimes i-1} \otimes \mathscr{R}^{\pm 1}_{V(\lambda),V(\lambda)}\otimes Id^{n-i-1}.$$
\end{proposition}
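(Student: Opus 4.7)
The plan is to verify that the assignment $\sigma_i \mapsto Id^{\otimes i-1} \otimes \mathscr{R}_{V(\lambda),V(\lambda)} \otimes Id^{\otimes n-i-1}$ respects the two defining relations of the braid group $B_n$ --- far commutativity $\sigma_i \sigma_j = \sigma_j \sigma_i$ for $|i-j|\ge 2$ and the braid relation $\sigma_i \sigma_{i+1}\sigma_i = \sigma_{i+1}\sigma_i \sigma_{i+1}$ --- and then to check that each generator lands in $Aut_{U_q(sl(2|1))}(V(\lambda)^{\otimes n})$ rather than in the larger linear automorphism group. The far-commutativity is immediate from the tensor factorization: the two operators act non-trivially on disjoint pairs of tensor factors and therefore commute on the nose.

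For the braid relation, I would factor out the identity operators on positions outside $\{i,i+1,i+2\}$ and reduce the problem to the Yang--Baxter identity for $\mathscr{R}$ on $V(\lambda)^{\otimes 3}$:
$$(\mathscr{R}\otimes Id)(Id\otimes \mathscr{R})(\mathscr{R}\otimes Id) = (Id\otimes \mathscr{R})(\mathscr{R}\otimes Id)(Id\otimes \mathscr{R}).$$
Expanding $\mathscr{R}=\tau^s\circ R$ with $R$ the universal $R$-matrix of Proposition \ref{Rmatrix}, this becomes a consequence of the standard axioms of a (super) quasi-triangular Hopf algebra: the hexagon identities $(\Delta\otimes id)(R)=R_{13}R_{23}$ and $(id\otimes\Delta)(R)=R_{13}R_{12}$ yield the quantum Yang--Baxter equation $R_{12}R_{13}R_{23}=R_{23}R_{13}R_{12}$ in $U_q(sl(2|1))^{\otimes 3}$, and this combined with the graded naturality of the super flip $\tau^s$ produces the desired identity for $\mathscr{R}$ after a sign-careful computation.

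For the final step, I would verify that $\mathscr{R}_{V,W}$ is a morphism of $U_q(sl(2|1))$-modules for every pair $V,W \in Rep(U_q(sl(2|1)))$, which reduces to the intertwining relation $\Delta^{op}(x)R=R\,\Delta(x)$ for $x\in U_q(sl(2|1))$, together with the fact that the super flip $\tau^s$ carries the action via $\Delta$ to the action via the super-opposite coproduct consistently with the $\Z_2$-grading. Invertibility of each $\rho_n(\sigma_i)$ is automatic from the invertibility of the universal $R$-matrix.

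The main obstacle is the careful tracking of signs produced by $\tau^s$: every transposition of homogeneous tensors introduces a factor $(-1)^{\deg(x)\deg(y)}$, and these signs must be propagated faithfully through both the three-factor Yang--Baxter computation and the intertwining check. Once the graded bookkeeping is set up correctly, the argument parallels the familiar derivation of braid group actions from quasi-triangular Hopf algebras; the super structure is the only genuinely new ingredient.
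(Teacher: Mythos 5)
The paper itself gives no proof of this proposition; it is stated as a standard consequence of the braided structure of $Rep(U_q(sl(2|1)))$ furnished by the universal $R$-matrix of Proposition \ref{Rmatrix}. Your proposal fills in exactly the standard argument that the paper tacitly invokes: far-commutativity from disjoint tensor supports, the braid relation from the quantum Yang--Baxter equation (itself a consequence of the hexagon/quasi-triangularity axioms), equivariance from $\Delta^{op}(x)R = R\,\Delta(x)$ combined with the graded naturality of $\tau^s$, and invertibility from that of $R$. The emphasis on carefully propagating Koszul signs through the three-factor Yang--Baxter check and the intertwining check is the right place to be cautious, and the rest is routine. Your proof is correct and is the argument the paper implicitly relies upon.
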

\begin{theorem}\label{Thm}(Decomposition of the tensor products \cite{GP}, $Lemma \ 1.3$)

If $\alpha ,\beta \in \C^{\ast }, n\in \N$ such that all the modules from the following expression are typical, then $V(0,\alpha)\otimes V(n,\beta)$ is semi-simple and has the following decomposition:\\
1) For $n\not=0$:
$$V(0,\alpha)\otimes V(n,\beta)=V(n,\alpha+\beta)\oplus V(n+1,\alpha+\beta)\oplus V(n-1,\alpha+\beta+1)\oplus V(n,\alpha+\beta+1).$$
2) For $n=0$: $$V(0,\alpha )\otimes V(0,\beta )=V(0,\alpha+\beta)\oplus  V(0,\alpha+\beta+1)\oplus V(1,\alpha+\beta).$$
\end{theorem}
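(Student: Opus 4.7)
The plan is as follows. Because the hypothesis forces every simple module appearing on the right hand side to be typical, and typical simples are projective-injective in the category of typical $U_q(sl(2|1))$-modules, the tensor product is automatically semisimple; so it suffices to compute the multiplicity of each simple summand. I would do this by locating all highest weight vectors in $V(0,\alpha)\otimes V(n,\beta)$ and reading off their weights.

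First, I would fix explicit weight bases for both factors. In $V(0,\alpha)$ one can take $v_0$ (highest weight $(0,\alpha)$), $v_1:=F_2 v_0$ (weight $(1,\alpha)$), $v_2:=F_1 F_2 v_0$ (weight $(-1,\alpha+1)$) and $v_3:=F_2 F_1 F_2 v_0$ (weight $(-2,\alpha+1)$), all of multiplicity one; the relation $F_2^2=0$ and the quantum Serre relation force this to be the full module. For $V(n,\beta)$ I would use the PBW-type basis obtained by applying monomials in $F_1,F_2$ to the highest weight vector $w_0$, keeping track of weights via $h_i$.

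Next, write an arbitrary vector of the tensor product in the form $\xi=\sum_{i=0}^{3} v_i\otimes w^{(i)}$ with $w^{(i)}\in V(n,\beta)$ of prescribed weight, and impose $E_1\xi=E_2\xi=0$ using the coproduct formulas $\Delta(E_i)=E_i\otimes 1+q^{-h_i}\otimes E_i$. This produces a small linear system relating the $w^{(i)}$ to the action of $E_j,F_j$ on $w^{(0)}$. The core computation is to show that, for $n\geq 1$, the candidate highest weights that admit nonzero solutions are exactly
\[
(n,\alpha+\beta),\ (n+1,\alpha+\beta),\ (n-1,\alpha+\beta+1),\ (n,\alpha+\beta+1),
\]
each with a one-dimensional space of highest weight vectors. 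For $n=0$ the component $V(n-1,\alpha+\beta+1)$ collapses because the would-be seed vector $F_1 w_0$ is zero in $V(0,\beta)$, leaving only three summands.

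A dimension check closes the argument: for $n\geq 1$ we have $\dim V(0,\alpha)\otimes V(n,\beta)=16(n+1)$, matching
\[
4(n+1)+4(n+2)+4n+4(n+1)=16(n+1),
\]
so no further simple summand is possible; the $n=0$ case works analogously with $4+4+8=16$. The main technical obstacle is the explicit but delicate solution of the highest weight equations, in particular handling the odd simple root $E_2$ and the auxiliary element $E'=E_1E_2-q^{-1}E_2E_1$ whose $F$-counterpart controls the $(n+1,\alpha+\beta)$ summand; this is the only place where quantum corrections enter in a nontrivial way, though flatness in $\hslash$ guarantees that the final decomposition must specialise to the classical $sl(2|1)$ answer, which offers a reliable sanity check throughout.
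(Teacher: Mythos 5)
The paper does not supply a proof of this statement — it is quoted directly from Geer--Patureau (\cite{GP}, Lemma~1.3) — so there is no internal argument here to compare yours against. Your outline (semisimplicity from the typicality hypothesis, then an explicit highest-weight-vector search confirmed by the dimension count $16(n+1)=4(n+1)+4(n+2)+4n+4(n+1)$) is a reasonable reconstruction of the standard proof of such a branching rule, and the $n=0$ degeneration via vanishing of the would-be seed vector is the right mechanism.

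There is, however, a concrete arithmetic slip that would derail the highest-weight search if carried out as written: the weight of $v_3:=F_2F_1F_2\,v_0$ is $(0,\alpha+1)$, not $(-2,\alpha+1)$. Since $[h_i,F_j]=-a_{ij}F_j$, applying $F_1$ shifts a weight by $(-2,1)$ and applying $F_2$ shifts it by $(1,0)$; starting from $(0,\alpha)$ one obtains $(1,\alpha)$, $(-1,\alpha+1)$, $(0,\alpha+1)$ in turn. (Your value would also make the $h_1$-weights of $V(0,\alpha)$ equal $\{0,1,-1,-2\}$, which cannot be a union of finite-dimensional $sl(2)$-strings under $E_1,F_1,h_1$.) The correction matters because pairing each $v_i$ with the top vector $w_0\in V(n,\beta)$ is precisely what yields the four candidate highest weights $(n,\alpha+\beta)$, $(n+1,\alpha+\beta)$, $(n-1,\alpha+\beta+1)$, $(n,\alpha+\beta+1)$ appearing in the statement; with $(-2,\alpha+1)$ the last candidate would be $(n-2,\alpha+\beta+1)$, which is not a summand. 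With the weight fixed, the rest of your scheme — solving $E_1\xi=E_2\xi=0$ via the given coproduct and closing with the dimension count — should go through.
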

\section{The Links-Gould invariant}\label{sec3}
 In this section, we will recall the definition of the Links-Gould invariant for links. After that, in the second part, we will see how this invariant can be recovered from a more general set of renormalised invariants introduced by Geer and Patureau, using the representation theory of the super quantum group $U_q(sl(m|n))$. 
 
Let $K:=\C(t^{\pm \frac{1}{2}}_0,t^{\pm \frac{1}{2}}_1)$ and $V:=<v_1,...,v_4>$ a $K$-vector space of dimension 4. 

Consider the set $\mathscr B$ to be the following ordered basis for $V$:
$$\mathscr B:=(v_1\otimes v_1,...,v_1 \otimes v_4,v_2\otimes v_1,...,v_2 \otimes v_4,...,v_4 \otimes v_1,...,v_4 \otimes v_4  ).$$
Consider the operator $R \in Aut(V \otimes V)$ defined in the basis $\mathscr B$ as follows:\\
$$R=\left[
\begin{array}{cccccccccccccccc}
 t_0 & \cdot & \cdot & \cdot & \cdot & \cdot & \cdot &  \cdot & \cdot & \cdot & \cdot & \cdot & \cdot & \cdot & \cdot & \cdot \\
\cdot & \cdot & \cdot & \cdot & t^{\frac{1}{2}}_0 & \cdot & \cdot &  \cdot & \cdot & \cdot & \cdot & \cdot & \cdot & \cdot & \cdot & \cdot\\
\cdot & \cdot & \cdot & \cdot & \cdot & \cdot & \cdot &  \cdot & t^{\frac{1}{2}}_0  & \cdot & \cdot & \cdot & \cdot & \cdot & \cdot & \cdot\\
\cdot & \cdot & \cdot & \cdot & \cdot & \cdot & \cdot &  \cdot & \cdot  & \cdot & \cdot & \cdot & 1 & \cdot & \cdot & \cdot\\
\cdot & t^{\frac{1}{2}}_0 & \cdot & \cdot &  t_0-1 &  \cdot & \cdot &  \cdot & \cdot & \cdot & \cdot & \cdot & \cdot & \cdot & \cdot & \cdot \\
\cdot & \cdot & \cdot & \cdot & \cdot & -1 & \cdot &  \cdot & \cdot & \cdot & \cdot & \cdot & \cdot & \cdot & \cdot & \cdot \\
\cdot & \cdot & \cdot & \cdot & \cdot & \cdot & t_0t_1-1 &  \cdot & \cdot & -t^{\frac{1}{2}}_0t^{\frac{1}{2}}_1 & \cdot & \cdot & -t^{\frac{1}{2}}_0t^{\frac{1}{2}}_1Y & \cdot & \cdot & \cdot \\
\cdot & \cdot & \cdot & \cdot & \cdot & \cdot & \cdot &  \cdot & \cdot & \cdot & \cdot & \cdot & \cdot & t^{\frac{1}{2}}_1 & \cdot & \cdot \\
\cdot & \cdot & t^{\frac{1}{2}}_0 & \cdot &  \cdot &  \cdot & \cdot &  \cdot & t_0-1 & \cdot & \cdot & \cdot & \cdot & \cdot & \cdot & \cdot \\
\cdot & \cdot & \cdot & \cdot & \cdot & \cdot & -t^{\frac{1}{2}}_0t^{\frac{1}{2}}_1 &  \cdot & \cdot & \cdot & \cdot & \cdot & Y & \cdot & \cdot & \cdot \\
 \cdot & \cdot & \cdot & \cdot & \cdot & \cdot & \cdot &  \cdot & \cdot & \cdot & -1 & \cdot & \cdot & \cdot & \cdot & \cdot \\
 \cdot& \cdot & \cdot & \cdot & \cdot & \cdot & \cdot &  \cdot & \cdot & \cdot & \cdot & \cdot & \cdot & \cdot & t^{\frac{1}{2}}_1 & \cdot \\
 \cdot & \cdot & \cdot & 1 & \cdot & \cdot & -t^{\frac{1}{2}}_0t^{\frac{1}{2}}_1Y &  \cdot & \cdot & Y & \cdot & \cdot & Y^2 & \cdot & \cdot & \cdot \\
 \cdot & \cdot & \cdot & \cdot & \cdot & \cdot & \cdot & t^{\frac{1}{2}}_1 &  \cdot  & \cdot & \cdot & \cdot & \cdot & t_1-1 & \cdot & \cdot\\
  \cdot & \cdot & \cdot & \cdot & \cdot & \cdot & \cdot & \cdot &  \cdot  & \cdot & \cdot & t^{\frac{1}{2}}_1 & \cdot & \cdot & t_1-1 & \cdot\\
 \cdot & \cdot & \cdot & \cdot & \cdot & \cdot & \cdot &  \cdot & \cdot & \cdot & \cdot & \cdot & \cdot & \cdot & \cdot & t_1 \\
\end{array}
\right]$$
Here, Y has the following expression:  $$Y=\left((t_0-1)(1-t_1)\right)^{\frac{1}{2}}.$$
\begin{proposition}\cite{DKL}
The operator $R\curvearrowright V \otimes V$ satisfies the Yang-Baxter equation, and it induces a sequence of braid group representations:
$$\varphi_n: B_n \rightarrow Aut(V^{\otimes{n}})$$
$$\varphi(\sigma^{\pm 1}_i)= Id^{\otimes i-1}\otimes R^{\pm 1} \otimes Id^{\otimes n-i-1}.$$
\end{proposition}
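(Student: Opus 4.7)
The plan is to recognize that the explicit 16-dimensional operator $R$ on $V\otimes V$ is, up to an identification $V\simeq V(0,\alpha)$ and a suitable substitution between $(t_0,t_1)$ and $(q,\alpha)$, precisely the braided $R$-matrix $\mathscr{R}_{V(0,\alpha),V(0,\alpha)}=\tau^s\circ R_{V(0,\alpha)\otimes V(0,\alpha)}$ coming from the universal $R$-matrix of $U_q(sl(2|1))$ in Proposition~\ref{Rmatrix}. Once this identification is in place, both assertions of the proposition become consequences of the categorical framework developed in Section~\ref{sec2}.

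First I would fix a weight basis of the 4-dimensional typical module $V(0,\alpha)$ (one even highest weight vector, two odd vectors of opposite parity, one even lowest weight vector), then compute the action of $\check{R}K$ followed by the super flip $\tau^s$ on $V(0,\alpha)\otimes V(0,\alpha)$ in the corresponding tensor basis. The series defining $\check{R}$ terminate on this tensor product, because $E_2^2=F_2^2=0$ and higher powers of $E_1,F_1$ annihilate a 4-dimensional module, so only finitely many summands contribute. After a substitution of the form $t_0=q^{-2\alpha}$ and $t_1=q^{2(\alpha+1)}$, the diagonal part is read off from the $K$-factor combined with $\tau^s$, the rank-one off-diagonal corrections come from the finite expansion of $\check{R}$, and the quantity $Y=((t_0-1)(1-t_1))^{1/2}$ appears from the mixed-weight contribution. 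Matching entry-by-entry with the displayed matrix establishes the identification.

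With this in hand, the braid-form Yang--Baxter equation
\begin{equation*}
(R\otimes I)(I\otimes R)(R\otimes I)=(I\otimes R)(R\otimes I)(I\otimes R)
\end{equation*}
is a direct consequence of the hexagon axiom satisfied by any universal $R$-matrix, applied to the triple tensor product $V(0,\alpha)^{\otimes 3}$. For the second assertion, the far-commutativity $\varphi_n(\sigma_i)\varphi_n(\sigma_j)=\varphi_n(\sigma_j)\varphi_n(\sigma_i)$ for $|i-j|\geq 2$ is automatic because the corresponding operators act on disjoint pairs of tensor factors, and the braid relation $\varphi_n(\sigma_i)\varphi_n(\sigma_{i+1})\varphi_n(\sigma_i)=\varphi_n(\sigma_{i+1})\varphi_n(\sigma_i)\varphi_n(\sigma_{i+1})$ is exactly the displayed Yang--Baxter equation applied at positions $i,i+1,i+2$. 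Invertibility of $R$, visible from its block structure, makes $\varphi_n(\sigma_i^{-1})$ well-defined and completes the descent from the free group on the $\sigma_i$ to $B_n$.

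The main obstacle is the entry-by-entry identification in the first step: correctly tracking the $\mathbb{Z}_2$-gradings, the signs introduced by the super flip, and the precise relationship between $(t_0,t_1)$ and $(q,\alpha)$ is delicate, since a misidentification tends to get absorbed silently into the square root $Y$. A less conceptual but fully self-contained alternative is a direct machine-assisted verification of the $64\times 64$ matrix identity $R_{12}R_{23}R_{12}=R_{23}R_{12}R_{23}$ on the standard basis of $V^{\otimes 3}$, which establishes both statements in one stroke but bypasses any connection to the super quantum group.
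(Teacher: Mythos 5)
The paper does not prove this proposition at all: it is stated with a citation to \cite{DKL} and relied on as a known result, so there is no internal argument to compare your proposal against. Your outline is nonetheless a sound and essentially standard route. The key move --- identifying the explicit $16\times 16$ matrix $R$ on $V\otimes V$ with the braided operator $\mathscr R_{V(0,\alpha),V(0,\alpha)}=\tau^s\circ R_{V(0,\alpha)\otimes V(0,\alpha)}$ under the specialization $t_0=q^{-2\alpha}$, $t_1=q^{2\alpha+2}$ --- is exactly the bridge that the paper itself invokes later (the Geer--Patureau specialization in Section~\ref{sec3} and the parameters of the cubic Hecke algebra in Section~\ref{sec4} use precisely this substitution), so your approach is internally consistent with the paper's framework. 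Once that identification is made, the braid-form Yang--Baxter equation does follow from the quasitriangularity of the universal $R$-matrix together with the super flip, and the passage from Yang--Baxter plus far-commutativity plus invertibility to a $B_n$-representation is routine, as you say.

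Two small cautions. First, you invoke ``the hexagon axiom''; strictly speaking the braid-form Yang--Baxter identity for $\check R=\tau^s\circ R$ comes from the quantum Yang--Baxter equation $R_{12}R_{13}R_{23}=R_{23}R_{13}R_{12}$ satisfied by the universal $R$-matrix, which is itself a consequence of the quasitriangular Hopf (super)algebra axioms; the hexagon axioms of the braided category encode the same data, but it is worth being precise about which form you are using. Second, the entry-by-entry matching you flag as delicate genuinely is: the super flip $\tau^s$ introduces signs on the two odd weight vectors of $V(0,\alpha)$, and a sign error there is easy to hide inside $Y=\bigl((t_0-1)(1-t_1)\bigr)^{1/2}$; if you carry this out, keeping the $\mathbb{Z}_2$-grading of the four basis vectors explicit throughout the expansion of $\check R K$ is the way to control it. Your remark that a direct $64\times 64$ verification is a fully self-contained fallback is accurate and is in fact closer in spirit to how such statements are checked in the cited source.
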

\begin{definition}
Consider the operator $\mu \in Aut(V)$ defined in the basis $\{v_1,...,v_4 \}$ by:
$$\mu=\left[
\begin{array}{*{16}c}
 t^{-1}_0 & \cdot & \cdot & \cdot &  \\
 \cdot & -t_1 & \cdot & \cdot &  \\
\cdot & \cdot & -t_0^{-1} & \cdot &  \\
\cdot & \cdot & \cdot & t_1 &  \\
\end{array}
\right].$$
\end{definition}
\begin{notation}
Consider $W$ to be a vector space over $K$. Let $f \in End(W)$ to be an endomorphism which is a scalar times the identity. We denote by $<f>$ the corresponding element of K:
$$f=<f> Id_W.$$ 
\end{notation}
\begin{theorem}(The Links-Gould invariant)

For any braid $\beta \in B_n$, the following partial trace leads to a scalar:
$$Tr_{2,...,n}((Id_V \otimes \mu^{\otimes(n-1)})\circ \varphi_n(\beta))\in K\cdot Id_V.$$
The Links-Gould polynomial is defined using this partial trace in the following manner:
$$LG(L;t_0,t_1):=<Tr_{2,...,n}((Id_V \otimes \mu^{n-1})>.$$
Then, this is a well defined link invariant and it has integer coefficients (\cite{I2}):
$$LG(L;t_0,t_1)\in \Z[t^{\pm 1}_0,t^{\pm 1}_1].$$
\end{theorem}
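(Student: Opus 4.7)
The plan is to follow the standard Reshetikhin--Turaev/Markov trace strategy. By Markov's theorem, two braids have isotopic closures if and only if they are related by conjugation inside a fixed $B_n$ and by positive/negative stabilisation $\beta \mapsto \beta\sigma_n^{\pm 1}$ from $B_n$ into $B_{n+1}$. So I would verify three properties of the quantity $T(\beta) := Tr_{2,\dots,n}\bigl((Id_V \otimes \mu^{\otimes (n-1)})\circ \varphi_n(\beta)\bigr)$: (i) that it lies in $K\cdot Id_V$; (ii) invariance under Markov conjugation; and (iii) invariance under Markov stabilisation. Once these hold, the scalar $\langle T(\beta)\rangle \in K$ descends to a well-defined function of the closure link.

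For scalarity (i), I would interpret $V$ as the typical four-dimensional representation $V(0,\alpha)$ of $U_q(sl(2|1))$ under a suitable specialisation of $(t_0,t_1)$ in terms of $(q,\alpha)$, with $R$ corresponding to $\mathscr R_{V,V}$ of Proposition \ref{braid} and $\mu$ to the pivotal element of the ribbon structure on the category of typical modules. Then $\varphi_n(\beta)$ is a $U_q(sl(2|1))$-intertwiner of $V^{\otimes n}$, and the partial quantum trace over the last $n-1$ factors produces an endomorphism of $V$ which commutes with the $U_q(sl(2|1))$-action; since $V=V(0,\alpha)$ is simple, Schur's lemma forces the result to be a scalar multiple of $Id_V$. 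Conjugation invariance (ii) follows from cyclicity of the full trace together with the observation that $Id_V\otimes \mu^{\otimes(n-1)}$ commutes with $\varphi_n(\alpha)$ for any $\alpha \in B_n$ fixing the first strand direction, i.e.\ with the action on the last $n-1$ strands, which is the naturality of the pivotal element with respect to intertwiners and can alternatively be checked block-by-block from the explicit matrices of $R$ and $\mu$.

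The main technical obstacle is the Markov stabilisation (iii). Concretely, one has to show
\[
Tr_2\bigl((Id_V\otimes \mu)\circ R^{\pm 1}\bigr) \;=\; Id_V,
\]
so that after tracing out the new strand in $\beta\sigma_n^{\pm 1}$ one recovers $T(\beta)$ with no extra factor (that is, the Markov stabilisation constant is $1$, so no writhe normalisation is needed). I would verify this by a direct computation from the block structure of $R$ and the diagonal form $\mu=\mathrm{diag}(t_0^{-1},-t_1,-t_0^{-1},t_1)$: for each basis vector $v_i$ of $V$ one sums the $(v_i\otimes v_j,\, v_i\otimes v_j)$-entries of $R^{\pm 1}$ against $\mu_{jj}$, and one must check that the four resulting row-totals all collapse to $1$. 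The cancellations use precisely the coefficients of $R$, including the mixing entries involving $Y=\sqrt{(t_0-1)(1-t_1)}$; this is the content of the Links--Gould normalisation and is the step where the specific form of $R$ is essential rather than just the Yang--Baxter equation.

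Having established (i)--(iii), Markov's theorem yields a well-defined link invariant $LG(L;t_0,t_1)\in K$. For the refined integrality statement $LG(L;t_0,t_1)\in \Z[t_0^{\pm 1},t_1^{\pm 1}]$, I would appeal directly to the result of Ishii \cite{I2}, who shows by a state-sum/skein argument, after a half-integer change of variables, that only integer powers of $t_0,t_1$ appear and that the coefficients are integers. The expected hard part of the proof is precisely the explicit computation of the partial trace in (iii); everything else is essentially formal, coming from the Hopf-algebraic origin of $R$ and $\mu$.
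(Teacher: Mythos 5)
The paper does not prove this theorem: Section~\ref{sec3} is a background review, and the statement (with \cite{I2} cited for the integrality refinement) is quoted without argument. There is therefore no internal proof for your attempt to be compared against, and I can only assess it on its own terms.

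Your overall Markov framework, the Schur's-lemma derivation of scalarity from the simplicity of $V(0,\alpha)$ under the specialisation $t_0=q^{-2\alpha}$, $t_1=q^{2\alpha+2}$, and the identification of the partial-trace identity $Tr_2\bigl((Id_V\otimes\mu)R^{\pm 1}\bigr)=Id_V$ as the computational crux of stabilisation are all sound; that partial trace does evaluate to $Id_V$, each of the four diagonal sums collapsing to $1$. But your conjugation-invariance step has a real gap. The argument you give --- partial-trace cyclicity on the last $n-1$ factors plus commutation of $Id_V\otimes\mu^{\otimes(n-1)}$ with $\varphi_n(\gamma)$ --- only applies when $\gamma$ lies in the parabolic subgroup $\langle\sigma_2,\dots,\sigma_{n-1}\rangle$ fixing the open strand, because $(\mu\otimes\mu)R=R(\mu\otimes\mu)$ does \emph{not} give $(Id_V\otimes\mu)R=R(Id_V\otimes\mu)$. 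Markov's theorem requires invariance under conjugation by \emph{all} of $B_n$, in particular by $\sigma_1$, which swaps the untraced strand with a traced one. Nor can you retreat to ``cyclicity of the full trace'': since $Tr(\mu)=t_0^{-1}-t_1-t_0^{-1}+t_1=0$, the full quantum trace $Tr\bigl(\mu^{\otimes n}\varphi_n(\beta)\bigr)$ vanishes identically --- this is precisely the vanishing quantum-dimension phenomenon that forces the $(1,1)$-tangle formulation in the first place, so it carries no information. Closing the gap requires either a direct verification of the remaining cyclicity relation $\bigl\langle Tr_{2,\dots,n}\bigl((Id_V\otimes\mu^{\otimes(n-1)})\varphi_n(\sigma_1\beta)\bigr)\bigr\rangle=\bigl\langle Tr_{2,\dots,n}\bigl((Id_V\otimes\mu^{\otimes(n-1)})\varphi_n(\beta\sigma_1)\bigr)\bigr\rangle$ from the explicit matrices, or an appeal to the Geer--Patureau theory of ambidextrous modified traces, which shows that this partial trace computes a genuine trace on the endomorphism algebras of typical modules and hence is conjugation-invariant. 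As written, your steps (i)--(iii) do not yet license the application of Markov's theorem.
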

Now, we present the Geer-Patureau method, that leads to modified quantum invariants for links starting from representation theory of super quantum groups. 
As we have seen, the category of representations of $U_q(sl(2|1))$ has a brading, which leads to a braid group representation. Moreover, it is known that it has a structure of a ribbon category \cite{GP1}. 
\begin{definition}
Let $\mathscr C$ be a category. The category of $\mathscr C$-colored framed tangles $\mathscr T_{\mathscr C}$ is defined as follows:
$$Ob(\mathscr T_{\mathscr C})= \{(V_1,\epsilon_1),...,(V_m,\epsilon_m) | m \in \mathbb N,\epsilon_i \in {\pm1}, V_i \in  \mathscr C \}.$$ 
$$Hom_{\mathscr T_{\mathscr C}}\left( (V_1,\epsilon_1),...,(V_m,\epsilon_m);(W_1,\delta_1),...,(W_n,\delta_n)\right) = \{ \mathscr C- \text{colored framed tangles}$$
$$ \ \ \ \ \ \ \ \ \ \ \ \ \ \ \ \ \ \ \ \ \ \ \ \ \ \ \ \ \ \ \ \ \ \ \ \ \ \ \ \ \  T : (V_1, \epsilon_1), ..., (V_m, \epsilon_m) \uparrow (W_1, \delta_1), ..., (W_n, \delta_n)/ \text{isotopy}.$$
Remark: The tangles  $T$ have to respect the boundary colors $ \{V_i, W_j \}$.
Once we have such a tangle, it has an induced orientation, coming from the signs $\epsilon_i$ , using the following conventions:
$$(V,-) \ \downarrow,  \ \ \ (V,+) \ \uparrow.$$
\end{definition}
\begin{theorem}(Reshetikhin-Turaev)
Let $\mathscr C$ be a ribbon category. Then, there exist a unique monoidal functor 
$$\mathscr F: \mathscr T_{\mathscr C}\rightarrow \mathscr C$$
such that 
$\forall V \in \mathscr C$, it respects the following relations:
$$ \ \mathscr F((V,+))=V; \ \ \ \mathscr F((V,-))=V^{*} \ \ \ \ \ \ \ \ \ \ \ \ $$ 
and it is fixed onto elementary morphisms by the ribbon structure of the category $\mathscr C$.
\end{theorem}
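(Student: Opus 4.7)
The plan is to invoke the standard generators-and-relations presentation of the framed tangle category $\mathscr T_{\mathscr C}$, which goes back to Freyd--Yetter and Turaev. The first step is to exhibit $\mathscr T_{\mathscr C}$ as the free ribbon category generated by the objects of $\mathscr C$. Concretely, any $\mathscr C$-colored framed tangle can be put, via a generic projection to the plane, into the form of a vertical composition and horizontal juxtaposition (in the monoidal sense) of a finite list of elementary tangles: identity strands, positive and negative crossings $X^{\pm}_{V,W}$, cups $\cup_V$ and caps $\cap_V$ (in both orientations), and a positive/negative twist $\tau_V^{\pm}$ recording the framing.

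The second step is to define $\mathscr F$ on these generators using the ribbon structure of $\mathscr C$. Namely, one sets $\mathscr F(X^+_{V,W}) := c_{V,W}$, the braiding, with $\mathscr F(X^-_{V,W}) := c_{W,V}^{-1}$; $\mathscr F(\cup_V)$ and $\mathscr F(\cap_V)$ are the coevaluation and evaluation maps of the rigid structure; $\mathscr F(\tau_V^{\pm}) := \theta_V^{\pm 1}$ is the twist. On objects, the prescription $\mathscr F((V,+))=V$ and $\mathscr F((V,-))=V^{\ast}$ extends to lists via the monoidal product. This completely determines $\mathscr F$ on elementary morphisms, hence, by functoriality and monoidality, on arbitrary compositions. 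This is precisely what yields uniqueness, once well-definedness has been established.

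The core of the proof is then checking that $\mathscr F$ sends isotopic tangles to the same morphism of $\mathscr C$. By the Reidemeister-type theorem for framed oriented tangles (Turaev), this reduces to verifying a finite list of local identities: the framed Reidemeister moves R2 and R3, the zig-zag (snake) equations for the duality pairings, the naturality of the braiding with respect to cups and caps, and the compatibility of the twist with duality ($\theta_{V^{\ast}} = (\theta_V)^{\ast}$) and with tensor products ($\theta_{V\otimes W} = (\theta_V \otimes \theta_W) \circ c_{W,V} \circ c_{V,W}$). Each of these identities is precisely one of the defining axioms of a ribbon category, so well-definedness follows directly from the hypothesis on $\mathscr C$.

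The main obstacle in this program is neither the construction nor the axiom checking, but rather the proof that the stated generators and relations really do present $\mathscr T_{\mathscr C}$, i.e.\ the statement that the framed tangle category is \emph{freely} ribbon on its objects; the nontrivial content is that every isotopy between two diagrams can be reduced to a finite sequence of the local moves listed above. Since this is a classical result (Shum, Turaev, Reshetikhin--Turaev, Yetter--Freyd), we propose to quote it; the theorem as stated is then immediate from the universal property of the free ribbon category applied to $\mathscr C$ itself.
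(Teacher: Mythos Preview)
Your sketch is the standard argument and is correct in outline: present $\mathscr T_{\mathscr C}$ by generators (crossings, cups, caps, twists) and relations (framed Reidemeister moves, snake identities, naturality, twist compatibility), define $\mathscr F$ on generators via the braiding, duality, and twist of $\mathscr C$, and check that the ribbon axioms match the local moves so that $\mathscr F$ is well defined; uniqueness is forced by the values on generators. The only real content, as you note, is the presentation of the framed tangle category, which you correctly defer to Turaev/Shum/Freyd--Yetter.

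However, the paper does not actually prove this statement. It is quoted as the classical Reshetikhin--Turaev theorem, with no argument given; it functions purely as background in Section~\ref{sec3} before the discussion of the Links--Gould invariant and the Geer--Patureau renormalisation. So there is nothing to compare your proof against: the paper treats the result as a black box, and your proposal simply unpacks what is inside that box. If your goal was to supply a proof where the paper gives none, your outline is appropriate; if your goal was to reconstruct the paper's own argument, be aware that there is none to reconstruct.
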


This construction applied to the representation theory of $U_q(sl(2|1)$ leads to vanishing invariants. The problem comes from the fact that the quantum dimension of a representation (which corresponds to the evaluation of the Reshetikhin-Turaev functor onto the circle coloured with that representation) vanishes. Then, the Geer-Patureau construction uses the Reshetikhin-Turaev functor on the link with one strand that is cut and correct it in a way that leads to a well defined invariant. They introduce the notion called modified quantum dimension, denoted by $d$, and use that instead of the usual quantum dimension, to close up the last strand that is cut. We present the main steps of the construction.

Let $L$ be a link with $k$ components. Consider $n \in \N$ and $\alpha_1,...,\alpha_k \in \C$. One colors the components of $L$ with the representations $\{V(n,\alpha_1),...,V(n,\alpha_k) \}$. Then, let us choose a strand of $L$, and denote by $L'$ the $(1,1)$-tangle obtained from $L$ by cutting this strand. Let $i \in \{1,...,k   \}$ such that the strand that we cut has the colour $V(n, \alpha_i)$.
\begin{theorem}(\cite{GP1})
1) Consider the following modified construction, defined as:
$$F'(L):=d(V(n, \alpha_i)) <\mathscr F (L')>.$$ 
Then, $F'$ is a well defined link invariant, called the Geer-Patureau modified invariant. 

2) There exist a link invariant, which is a polynomial in $k+1$ variables 
$$M^n(L)(q,q^{\alpha_1},...,q^{\alpha_k})\in \Q(q,q_1,...,q_k)$$
such that 
$$F'(L)=e^{- \sum lk_{i,j}(2 \alpha_i \alpha_j+ \alpha_i + \alpha_j)} \cdot M^n(L)(q,q^{\alpha_1},...,q^{\alpha_k}).$$
(here $lk_{i,j}$ is the linking number between the strands $i$ and $j$).
\end{theorem}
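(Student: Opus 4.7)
The plan is to treat Part~1 as the standard Geer--Patureau renormalised invariant construction for the ribbon category $\mathrm{Rep}(U_q(sl(2|1)))$, whose vanishing quantum dimensions on generic simples $V(n,\alpha)$ force the introduction of a modified dimension $d$. The well-definedness of $F'(L)$ splits into four ingredients: (a)~$\langle\mathscr F(L')\rangle$ is a scalar in $K$; (b)~independence of the cut point along a fixed component; (c)~independence of which component is cut; (d)~Reidemeister invariance of the framed-tangle picture.

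For (a), since $\alpha_i\in\C\setminus\Q$, the module $V(n,\alpha_i)$ is typical and therefore simple, so Schur's lemma gives $\mathrm{End}_{U_q(sl(2|1))}(V(n,\alpha_i))=K\cdot\mathrm{Id}$; the Reshetikhin--Turaev functor $\mathscr F$ sends the $(1,1)$-tangle $L'$ to such an endomorphism, hence to a scalar. For (b), two cuts on the same component produce $(1,1)$-tangles whose $\mathscr F$-images are related by the cyclic-trace symmetry of a ribbon category and hence coincide as scalars. For (d), isotopies of $L'$ fixing the open endpoints are generated by oriented Reidemeister moves, which $\mathscr F$ respects by construction.

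Step~(c) is the main obstacle and is the heart of the Geer--Patureau mechanism. One must prove
$$d(V(n,\alpha_i))\,\langle\mathscr F(L'_i)\rangle \;=\; d(V(n,\alpha_j))\,\langle\mathscr F(L'_j)\rangle$$
for any two cut choices $i\neq j$. Following \cite{GP1}, the equality is reduced via local tangle moves (sliding the cut around a crossing) to the two-strand configuration and then follows from the identity
$$d(V)\,\mathrm{tr}_W(f)\;=\;d(W)\,\mathrm{tr}_V(f),\qquad f\in\mathrm{End}(V\otimes W),$$
which is equivalent to the existence and uniqueness (up to scalar) of a non-degenerate modified trace on the tensor ideal of projective $U_q(sl(2|1))$-modules. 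Constructing this trace and verifying its cyclicity on typical objects is the technical core; in the $U_q(sl(2|1))$ case the explicit formula for $d(V(n,\alpha))$ as a rational function of $q^{\alpha}$ can be read off from the structure of typical simples.

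For Part~2, I would analyse the universal $R$-matrix from Proposition~\ref{Rmatrix}. The Cartan part $K=q^{-h_1\otimes h_2-h_2\otimes h_1-2h_2\otimes h_2}$ acts on a highest-weight vector of $V(n,\alpha_i)\otimes V(n,\alpha_j)$ by a monomial whose exponent is a fixed integer linear combination of $\alpha_i\alpha_j,\alpha_i,\alpha_j$; at each crossing between components $i$ and $j$ this contributes exactly such a scalar factor. Collecting these contributions across all crossings and combining with the ribbon-twist factor on each component yields, once rewritten in terms of linking numbers, precisely the prefactor $e^{-\sum lk_{i,j}(2\alpha_i\alpha_j+\alpha_i+\alpha_j)}$. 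The residual factor depends only on the nilpotent part $\check R$ of the $R$-matrix and on the modified dimension $d$, both of whose matrix entries in weight bases lie in $\Q(q,q^{\alpha_1},\ldots,q^{\alpha_k})$; this residual factor is, by definition, the polynomial invariant $M^n(L)$, and invariance under the various choices then transfers from $F'(L)$ to $M^n(L)$ since the abelian prefactor depends only on the framed link $L$ through the $lk_{i,j}$.
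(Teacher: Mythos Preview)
The paper does not prove this theorem: it is stated in Section~\ref{sec3} purely as a cited result from \cite{GP1}, with no proof given. There is therefore nothing in the paper to compare your proposal against.

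That said, your sketch is a reasonable high-level outline of the Geer--Patureau construction. Your decomposition of Part~1 into (a)--(d) is standard, and your identification of step~(c) as the crux---resolved by the existence of a modified trace on the ideal of projectives satisfying the compatibility $d(V)\,\mathrm{tr}_W(f)=d(W)\,\mathrm{tr}_V(f)$---is correct in spirit. For Part~2, your idea of isolating the Cartan factor $K$ of the $R$-matrix to extract the exponential prefactor is also the right mechanism. If you were actually writing this proof rather than comparing to the present paper, you would need to supply the construction of the modified dimension $d$ (or cite it precisely), verify the exact exponent bookkeeping from $K=q^{-h_1\otimes h_2-h_2\otimes h_1-2h_2\otimes h_2}$ against the claimed form $2\alpha_i\alpha_j+\alpha_i+\alpha_j$, and justify that the nilpotent part $\check R$ and $d$ indeed have entries in $\Q(q,q^{\alpha_1},\ldots,q^{\alpha_k})$; these are the places where a reader would push back on a proof sketch of this shape.
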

\begin{theorem}(\cite{GP1},\cite{K})
The modified Geer-Patureau invariants constructed from $U_q(sl(2|1))$ recover the Links-Gould invariant, by a specialisation of coefficients:
$$LG(L;t_0,t_1)|_{(t_0=q^{-2 \alpha},t_1=q^{2\alpha+2})}=\{\alpha\}\{\alpha+1\}M^{0}(q,q^{\alpha},...,q^{\alpha})$$

\end{theorem}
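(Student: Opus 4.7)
The plan is to match the two constructions at the level of the basic pieces that build them, namely the braiding operator, the pivotal element $\mu$, and the scalar that is inserted when closing the cut strand. Since both invariants are built from $(1,1)$-tangles via a partial trace and reduced to a scalar in $K$, it suffices to show that, after the specialisation $t_0 = q^{-2\alpha}$, $t_1 = q^{2\alpha+2}$, the Links-Gould data on $V$ coincide with the Reshetikhin-Turaev data on $V(0,\alpha)$ up to the explicit prefactor $\{\alpha\}\{\alpha+1\}$.

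First I would make explicit the $U_q(sl(2|1))$-module $V(0,\alpha)$: it is $4$-dimensional with an ordered weight basis $\{w_1,w_2,w_3,w_4\}$ adapted to the action of the Cartan generators $h_1, h_2$. On $V(0,\alpha)\otimes V(0,\alpha)$ I would compute the action of $\mathscr R_{V(0,\alpha), V(0,\alpha)} = \tau^s \circ \check R K$ using the formula from Proposition \ref{Rmatrix}. The factor $K = q^{-h_1\otimes h_2 - h_2\otimes h_1 - 2h_2\otimes h_2}$ acts diagonally on weight vectors and produces, under $t_0 = q^{-2\alpha}$ and $t_1 = q^{2\alpha+2}$, exactly the diagonal entries of the matrix $R$ displayed in Section~\ref{sec3}. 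The remaining factor $\check R$ contributes only through the finitely many terms that do not annihilate the weight vectors of $V(0,\alpha)$, because $E_2^2 = F_2^2 = 0$ and because the $q$-exponentials truncate on a $4$-dimensional representation; a direct term-by-term comparison would then identify the off-diagonal entries of $R$ (including the entries involving $Y = ((t_0-1)(1-t_1))^{1/2}$, which matches $\{\alpha\}\{\alpha+1\}$ up to a sign and a power of $q$) with the matrix coefficients of $\mathscr R_{V(0,\alpha), V(0,\alpha)}$ in the chosen basis. Similarly, I would identify the pivotal element $\mu \in \mathrm{End}(V)$ with the action on $V(0,\alpha)$ of the ribbon pivotal of $U_q(sl(2|1))$, namely $K_{2\rho}$ (or its super-analogue); its eigenvalues $t_0^{-1}, -t_1, -t_0^{-1}, t_1$ specialise to the correct values $q^{2\alpha}, -q^{2\alpha+2}, -q^{2\alpha}, q^{2\alpha+2}$ of $K_{2\rho}$ on the weights of $V(0,\alpha)$.

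Next I would treat the scalar prefactor. In the Geer-Patureau construction one cuts a strand and closes the resulting $(1,1)$-tangle using the modified quantum dimension $d(V(0,\alpha))$, whereas in the Links-Gould construction one just takes the bracket $<\cdot>$ of the $(1,1)$-partial trace. The two conventions differ precisely by $d(V(0,\alpha))$. Using the formula for $d$ on typical representations of $U_q(sl(2|1))$ from \cite{GP1} (a ratio of $q$-brackets in the highest weight), a direct computation gives
$$d(V(0,\alpha)) = \{\alpha\}\{\alpha+1\}$$
up to a universal normalisation. Combining this with the $R$-matrix/pivotal identification above, and with the framing/linking-number correction $e^{-\sum \mathrm{lk}_{i,j}(2\alpha_i\alpha_j + \alpha_i + \alpha_j)}$ (which is trivial when all colours are equal to $V(0,\alpha)$ and the writhe is absorbed into $R$, or which cancels between the two sides for a link $L$ after one checks the framing anomaly), yields the stated equality
$$LG(L;t_0,t_1)\big|_{t_0=q^{-2\alpha},\,t_1=q^{2\alpha+2}} = \{\alpha\}\{\alpha+1\}\, M^{0}(L)(q,q^\alpha,\dots,q^\alpha).$$

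The main obstacle is the careful bookkeeping of signs and normalisations: the super-flip $\tau^s$ introduces signs of Koszul type on odd weight vectors, the square root $Y$ depends on a choice of branch, and the relation between Geer-Patureau's normalisation of the $R$-matrix (which is typically divided by a scalar so that it acts by the identity on the highest weight vector of $V(n,\alpha+\beta)$ in the decomposition of Theorem~\ref{Thm}) and the Links-Gould normalisation chosen in Section~\ref{sec3} must be tracked through every entry. A clean way to handle this is to fix the identification on one carefully chosen basis vector (for instance the highest weight vector $v_1\otimes v_1$, on which both $R$ and $\mathscr R$ act as multiplication by $t_0 = q^{-2\alpha}$), propagate it through all four summands of Theorem~\ref{Thm}, and verify that the eigenvalues of $\mathscr R$ on those summands match the four distinct eigenvalues of $R$ in the specialised matrix.
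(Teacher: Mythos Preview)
The paper does not prove this theorem at all: it is stated as a citation of results from \cite{GP1} and \cite{K}, with no argument given. So there is nothing to compare your proposal against in this paper.

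That said, your outline is essentially the strategy used in the cited references (identify the $R$-matrix and pivotal structure on $V(0,\alpha)$ with the explicit Links--Gould data under the specialisation, then account for the modified-dimension prefactor). Two places in your sketch are loose and would need real work if you were writing this out. First, your treatment of the framing/linking correction is hand-wavy: the factor $e^{-\sum \mathrm{lk}_{i,j}(2\alpha_i\alpha_j+\alpha_i+\alpha_j)}$ is \emph{not} trivial when all colours equal $\alpha$, and it does not simply ``cancel between the two sides''; one has to check that the Links--Gould normalisation of $R$ already absorbs exactly this twist. Second, the identification $d(V(0,\alpha))=\{\alpha\}\{\alpha+1\}$ ``up to a universal normalisation'' is where the entire content of the prefactor lives, so leaving it as a slogan is not a proof; in the literature the modified dimension is a ratio of open Hopf links and its precise value (and whether it is $\{\alpha\}\{\alpha+1\}$ or its inverse) depends on conventions that you would need to pin down.
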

\section{The centralizer algebra $LG_n$}{\label{sec4}}
In this section, we will introduce a family of centraliser algebras corresponding to a sequence of tensor powers associated to a fixed representation $V(0,\alpha)$ of the super-quantum group $U_q(sl(2|1))$. The aim is to understand this family and its relation to the group algebra of the braid group.
\begin{definition}
Let us fix $\alpha \in \C\setminus \Q$. The centraliser algebra corresponding to the representation $V(0,\alpha)$ is defined as:
$$LG_n(\alpha):=End_{U_q(sl(2|1))}\left(  V(0, \alpha)^{\otimes n} \right) $$ 
Then $\{LG_n\}_{n \in \N}$ leads to a sequence of algebras, such that each of them is included in a natural way into the next one:
$$LG_{n-1}(\alpha)\subseteq LG_{n}(\alpha)$$ 
Using these inclusions, $LG_{n}(\alpha)$ becomes a bimodule over $LG_{n-1}(\alpha)$.
\end{definition}
\begin{remark}
The braid group action on $V(0,\alpha)^{\otimes n}$ (defined in Proposition \ref{braid}), extends to the following morphism, at the level of the group algebra of the braid group:
$$\rho_n(\alpha): \Bbbk B_n \rightarrow LG_n(\alpha)$$
\end{remark}
\begin{theorem}(Marin-Wagner \cite{MW})
The morphism $\rho_n(\alpha)$ is surjective.
\end{theorem}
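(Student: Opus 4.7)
The plan is to proceed by induction on $n$, with the multiplicity-free tensor product decomposition of Theorem \ref{Thm} as the central ingredient. For the base case, $V(0,\alpha)$ is simple, so Schur's lemma gives $LG_1(\alpha) \cong \C$ and $\rho_1$ is trivially surjective; for $n = 2$, Theorem \ref{Thm} decomposes $V(0,\alpha)^{\otimes 2}$ into three pairwise non-isomorphic simple summands, so $LG_2(\alpha) \cong \C^3$, and a direct computation with the $R$-matrix of Proposition \ref{Rmatrix} shows that $\rho_2(\sigma_1)$ has three distinct eigenvalues whenever $\alpha \in \C\setminus \Q$, hence generates $LG_2(\alpha)$.

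For the inductive step, assume $\rho_k$ is surjective for all $k<n$. Via the embedding $B_{n-1}\hookrightarrow B_n$ that adds a trivial strand on the right, the image $\rho_n(\Bbbk B_n)$ contains the subalgebra $LG_{n-1}(\alpha)\otimes \mathrm{Id}_V$. Writing $V(0,\alpha)^{\otimes(n-1)} = \bigoplus_\mu T_{n-1}^\mu \otimes V_\mu$ as an $LG_{n-1}(\alpha) \times U_q(sl(2|1))$-module and tensoring by $V(0,\alpha)$, Theorem \ref{Thm} yields
$$V(0,\alpha)^{\otimes n} \;=\; \bigoplus_\lambda \Bigl(\bigoplus_{\mu \,:\, V_\lambda \subset V_\mu \otimes V(0,\alpha)} T_{n-1}^\mu\Bigr) \otimes V_\lambda,$$
with each edge $\mu\to\lambda$ of multiplicity at most $1$. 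In particular, the restriction of each simple $LG_n(\alpha)$-module $T_n^\lambda$ to the subalgebra $LG_{n-1}(\alpha)\otimes \mathrm{Id}_V$ is multiplicity-free.

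Under multiplicity-free branching, a standard principle (in the spirit of Wenzl and Leduc--Ram) guarantees that $LG_n(\alpha)$ is generated by $LG_{n-1}(\alpha)\otimes \mathrm{Id}_V$ together with any element of the centralizer $Z_{LG_n(\alpha)}(LG_{n-1}(\alpha)\otimes \mathrm{Id}_V)$ whose spectrum distinguishes the different $\mu$-branches inside each $T_n^\lambda$. To produce such a separator inside $\rho_n(\Bbbk B_n)$, I would take the image of the Jucys--Murphy-type braid $\beta_n := (\sigma_{n-1}\sigma_{n-2}\cdots \sigma_1)(\sigma_1 \sigma_2 \cdots \sigma_{n-1})$, which commutes with $\sigma_1, \ldots, \sigma_{n-2}$ inside $B_n$ (the ``full loop'' of the $n$-th strand around the others is fixed by any braiding on the first $n-1$ strands); hence $\rho_n(\beta_n)$ lies in the required centralizer. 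By the ribbon structure on the category of $U_q(sl(2|1))$-representations, $\rho_n(\beta_n)$ acts on the summand $T_{n-1}^\mu \subset T_n^\lambda$ as a scalar determined by the ribbon twists of $V_\mu$, $V(0,\alpha)$ and $V_\lambda$, of the shape $q^{C(\lambda) - C(\mu) - C(V(0,\alpha))}$ where $C$ denotes the quadratic Casimir eigenvalue.

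The main obstacle will be verifying that, for each fixed $\lambda$, the scalars associated to the (at most four) different $\mu$'s with $V_\lambda \subset V_\mu \otimes V(0,\alpha)$ are pairwise distinct whenever $\alpha \in \C\setminus \Q$. By Theorem \ref{Thm} the highest weights of the three or four summands of $V(0,\alpha)\otimes V_\mu$ are explicit affine functions of $\alpha$ and of the weight of $V_\mu$, so the separation reduces to the non-vanishing of a short list of Laurent polynomials in $q$ and $q^\alpha$, which holds precisely because $\alpha \notin \Q$. Combining this with the multiplicity-free generation principle above closes the induction.
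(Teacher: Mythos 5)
Note first that this theorem is cited from Marin and Wagner \cite{MW} and is not proved in the present paper, so there is no in-paper argument to compare against; I can only assess your attempt on its own terms.

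The inductive step has a genuine gap at the point where you invoke the ``standard principle.'' Even under multiplicity-free branching, a semisimple algebra $B$ is \emph{not} generated by a semisimple subalgebra $A$ together with an element $z\in Z_B(A)$ whose spectrum separates the branches: any such $z$ is block-diagonal relative to the branching decomposition of each simple $B$-module, so the subalgebra $\langle A,z\rangle$ is also block-diagonal and is at most a maximal abelian extension of $A$, never all of $B$. A minimal counterexample is $A=\C\oplus\C$ embedded diagonally into $B=M_2(\C)$: the simple module $\C^2$ restricts multiplicity-freely to the two $A$-characters, $Z_B(A)$ is the diagonal subalgebra, and adjoining a separator such as $\mathrm{diag}(1,2)$ to $A$ still yields only the diagonal matrices, not $M_2(\C)$. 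The same count already exposes the problem in the Links--Gould tower at $n=3$: $\dim LG_3 = 20$, but the subalgebra generated by $LG_2\otimes\mathrm{Id}$ and an abelian separator is at most $1+3+1+2+2+1 = 10$ dimensional (the sum of the branch multiplicities). What you are missing is an element that connects distinct branches within a block, i.e.\ off-diagonal matrix units; the natural candidate is $\rho_n(\sigma_{n-1})$ itself, which does \emph{not} lie in $Z_{LG_n}(LG_{n-1}\otimes\mathrm{Id})$ and which your plan never uses. Proving that $LG_{n-1}\otimes\mathrm{Id}$ together with $\rho_n(\sigma_{n-1})$ generate $LG_n$ is the real content of the theorem; this is what the Lehrer--Zhang ``strongly multiplicity free'' machinery \cite{LeZa} cited in the paper is designed for, using the fact that $R_{V,V}$ has three distinct eigenvalues to produce, inside the image, the projections onto the three summands of $V(0,\alpha)^{\otimes 2}$ on each consecutive pair of tensor slots, followed by a double-commutant argument. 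Your base cases, the multiplicity-free branching drawn from Theorem \ref{Thm}, and the spectral analysis of $\rho_n(\beta_n)$ via ribbon twists are all correct and useful as lemmas, but on their own they do not close the induction.
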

Once we know that this morphism is surjective, an interesting question that arises is to study more deeply the kernel of this map. Marin and Wagner characterised these subalgebras for small values of $n$ ($n \leq 5$) and stated a couple of conjectures about them. Firstly, the question would be to compute the dimension of $LG_n(\alpha)$. Further on, one could wonder which is a set of generating relations for the kernel of the map $\rho_n(\alpha)$. In other words, the question would be to find a set of relations which are in the kernel of this map, such that if we consider the quotient of the algebra $\Bbbk B_n$ by these relations, the result becomes isomorphic to $LG_n(\alpha)$. In the sequel we will present in more details their conjectures.

\

\begin{conj}(Marin-Wagner; Theorem \ref{conj} ) 
$$dim(LG_{n+1})(\alpha)=\frac{(2n)!(2n+1)!}{(n!(n+1)!)^2}.$$
\end{conj}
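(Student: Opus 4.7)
The starting point is the semi-simple decomposition together with Schur's lemma: because $\alpha \in \C \setminus \Q$, every $V(x,(n+1)\alpha+y)$ appearing in
$$V(0,\alpha)^{\otimes n+1} = \bigoplus_{(x,y)\in\N\times\N} T_{n+1}(x,y)\otimes V\bigl(x,(n+1)\alpha+y\bigr)$$
is typical and simple, so
$$\dim LG_{n+1}(\alpha) = \sum_{(x,y)\in\N\times\N} \dim T_{n+1}(x,y)^{2}.$$
The proof therefore reduces to two separate questions: computing $\dim T_{n+1}(x,y)$, and then summing the squares. My plan is to first establish Theorem \ref{Thm1} to identify multiplicity spaces with a combinatorial set of planar paths, and then reinterpret the resulting sum of squares as a classical counting problem whose answer is the right-hand side of the conjectured formula.

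For the first step I induct on $n$. The base $n=0$ is clear: $T_{1}(0,0) = \C$ and the empty path at $(0,0)$ is the only admissible path. For the induction step, suppose the bijection between $T_{n}$ and length-$(n-1)$ paths is established. Tensoring the decomposition of $V(0,\alpha)^{\otimes n}$ by $V(0,\alpha)$ and applying Theorem \ref{Thm} produces, from each summand $V(x,n\alpha+y)$ with $x>0$, exactly the four summands labelled $(x,(n+1)\alpha+y)$, $(x+1,(n+1)\alpha+y)$, $(x-1,(n+1)\alpha+y+1)$, $(x,(n+1)\alpha+y+1)$, which are precisely the endpoints obtained by applying $M_{1}, M_{2}, M_{4}, M_{3}$ respectively at $(x,y)$. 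When $x=0$ the atypical rule in Theorem \ref{Thm} produces only three summands, precisely reflecting that $M_{4}$ is forbidden; this is the condition $x'>0$ in the move $M_{4}$. Hence appending one admissible move to each length-$(n-1)$ path yields exactly the paths of length $n$, and the direct-sum decomposition passes to a refined bijection from step $n$ to step $n+1$, completing Theorem \ref{Thm1}.

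For the second step, writing $p_{n}(x,y) := \dim T_{n+1}(x,y)$, I need to prove
$$\sum_{(x,y)\in\N\times\N} p_{n}(x,y)^{2} \;=\; \frac{(2n)!\,(2n+1)!}{\bigl(n!\,(n+1)!\bigr)^{2}}.$$
The right-hand side equals $C_{n}\cdot \binom{2n+1}{n}$ and is known (see \cite{A},\cite{S}) to count a natural family of pairs of non-crossing lattice paths inside a triangular region of the plane. I would interpret $p_{n}(x,y)^{2}$ as counting ordered pairs of length-$n$ paths from $(0,0)$ sharing the common endpoint $(x,y)$, and sum over $(x,y)$ to obtain all ordered pairs of $n$-step paths with a common terminal vertex. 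The strategy is then to construct an explicit bijection between this set and the reference family of non-crossing path pairs. The natural device is to reverse one path step-by-step — each of $M_{1},M_{2},M_{3},M_{4}$ has a canonical inverse — and concatenate with the second, producing a single trajectory of length $2n$ anchored at $(0,0)$. After relabelling the step types one obtains standard north/east paths in an enlarged lattice, and a reflection-type argument absorbing the boundary condition inherited from $M_{4}$ matches them with the desired count.

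The main obstacle lies in the second step. The first is a Bratteli-diagram-style induction, essentially formal once the decomposition rule of Theorem \ref{Thm} is available; the only point of care is the boundary row $x=0$. The second, by contrast, requires controlling the non-uniform condition in move $M_{4}$ (the allowed step set depends on the current position) and designing a bijection that absorbs this asymmetry into a clean non-crossing condition for pairs of paths. Matching the resulting combinatorial object against the specific Catalan-type product $C_{n}\,\binom{2n+1}{n}$ is the intricate heart of the argument, and will occupy the core of Sections \ref{sec6} and \ref{sec7'}.
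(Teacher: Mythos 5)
Your Step 1 tracks the paper exactly: the recursive bijection between multiplicity spaces and four-move paths, with the boundary case $x=0$ explaining the constraint on $M_4$, is precisely the Bratteli-style induction in Section \ref{sec5} and the proof of Theorem \ref{Thm1}. No issues there.

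Your Step 2 has a genuine gap: the key combinatorial bijection is never constructed. You correctly identify the reduction to
$$\sum_{(x,y)\in\Delta_n}|P_{n+1}(x,y)|^2 = \frac{(2n)!(2n+1)!}{(n!(n+1)!)^2},$$
and you correctly recognize the right-hand side as counting pairs of non-crossing lattice paths. But your proposed device --- reversing one four-move path, concatenating with the other to get a length-$2n$ trajectory, then a ``reflection-type argument absorbing the boundary condition'' --- is a sketch of a hope, not a bijection. There is no explanation of how the resulting length-$2n$ walk decomposes into a pair of north-east lattice paths, nor how the position-dependent constraint ($M_4$ only allowed when $x>0$) would be handled by reflection; a naive reflection principle does not obviously apply when the admissible step set varies with position. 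The paper's Theorem \ref{3} takes a cleaner and genuinely different route: it constructs a step-by-step bijection $f^{n+1}_{(x_0,y_0)}\colon D^{\Delta}_{n}(x_0+y_0,y_0)\to P_{n+1}(x_0,y_0)$ from pairs of non-crossing north-east paths with a common origin to \emph{single} four-move paths, defined pointwise by $\left((x_1,y_1),(x_2,y_2)\right)\mapsto (y_2-y_1,\,x_2)$. Under this map the move-pairs $\{(0,1),(0,1)\}$, $\{(1,0),(1,0)\}$, $\{(1,0),(0,1)\}$, $\{(0,1),(1,0)\}$ go to $M_1,M_3,M_2,M_4$ respectively, and --- crucially --- the non-crossing condition $y_2\geq y_1$ translates exactly into non-negativity of the first coordinate, which is precisely the condition under which $M_4$ is permitted. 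No reflection principle is needed; the boundary constraint is absorbed structurally. So you have the correct skeleton but are missing the concrete coordinate transformation that makes the argument close, and the alternative you sketch would require substantial new work to carry out (if it works at all).
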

Moreover, going further in the study of the morphism $\rho_n(\alpha)$, the following part refers to the differences between the algebras $LG_n(\alpha)$ and $\Bbbk B_n$.

\begin{definition}(The cubic Hecke algebra)\\
Let $a,b,c \in \Bbbk^*$. Define the corresponding cubic Hecke algebra as:
$$H_n(a,b,c):=\Bbbk B_n / \left( (\sigma_1-a)(\sigma_1-b)(\sigma_1-c)\right)$$
\end{definition}
Actually, for specific values of the parameters, the cubic Hecke algebra will lead to the centraliser algebra $LG_n$. 
\begin{proposition}(\cite{MW})
Let $\alpha \in \C \setminus \Q$ such that $\{1, \alpha, \alpha^2  \}$ are linearly independent over $\Q$.\\
Consider the following parameters $$a=-q^{-2\alpha(\alpha+1)}, \ \ \ \ \  b=q^{-2\alpha^2}, \ \ \ \ \  c=q^{-2(\alpha+1)^2}.$$
Denote the associated cubic Hecke algebra by:
$$H_n({\alpha})=H_n(a,b,c).$$
Then the morphism $\rho_n(\alpha)$ factors through the algebra $H_n(\alpha)$, leading to a quotient morphism $\tilde{\rho}_n(\alpha)$:
$$ \ \ \ \ \ \ \ \ \ \ \ \ \ \ \ \ \ \ \ \ \ \ \ \ \ \ \ \rho_n(\alpha) \ \ \ \ \ \ \ \ \ \ \ \ \ \ \ \ \ \ \ \ \ \ \ \ \ $$
$$ \ \ \ \ \ \ \mathbb \Bbbk B_n  \ \ \ \ \  \ \longrightarrow \ \ \ \  LG_n(\alpha)$$
$ \ \ \ \ \ \ \ \ \ \ \ \ \ \ \ \ \ \ \ \ \ \ \ \ \ \ \ \ \ \ \ \ \ \ \ \ \ \ \  \ \ \ \ \ \ \ \ \ \ \ \ \ \ \ \ \ \ \searrow \ \ \ \ \ \ \ \ \ \nearrow  \ \tilde{\rho}_n(\alpha)\ \ \ \ \ $

$ \ \ \ \ \ \ \ \ \ \ \ \ \ \ \ \ \ \ \ \ \ \ \ \ \ \ \ \ \ \ \ \ \ \ \ \ \ \ \  \ \ \ \ \ \ \ \ \ \ \ \ \ \ \ \ \ \  H_n(\alpha) \ \ \ \ \ \ \ \ \ \  $
\end{proposition}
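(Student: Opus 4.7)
The plan is to verify that the image $\rho_n(\alpha)(\sigma_1)$ satisfies the cubic relation $(X-a)(X-b)(X-c) = 0$. Since $\rho_n(\alpha)(\sigma_1)$ acts as $\mathscr R_{V(0,\alpha),V(0,\alpha)}\otimes \mathrm{Id}$ on the first two tensor factors, it is enough to check the cubic relation for the braiding $\mathscr R_{V(0,\alpha),V(0,\alpha)}$ on $V(0,\alpha)^{\otimes 2}$. Once established, the same relation holds for every other generator $\sigma_i$ by conjugation in $B_n$, so the generating relation of $H_n(\alpha)$ is killed by $\rho_n(\alpha)$ and the map descends to $\tilde\rho_n(\alpha)\colon H_n(\alpha)\to LG_n(\alpha)$, yielding the desired factorization diagram.

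The key input is Theorem \ref{Thm} in the case $n=0,\ \beta=\alpha$:
$$V(0,\alpha)\otimes V(0,\alpha) \;=\; V(0,2\alpha)\oplus V(0,2\alpha+1)\oplus V(1,2\alpha),$$
a multiplicity-free decomposition into three simple typical modules (by the genericity of $\alpha$). Since $\mathscr R_{V(0,\alpha),V(0,\alpha)}$ is a $U_q(sl(2|1))$-intertwiner, Schur's lemma forces it to act by scalars $\lambda_1,\lambda_2,\lambda_3$ on the three summands, and so its minimal polynomial divides $(X-\lambda_1)(X-\lambda_2)(X-\lambda_3)$. The proof is thus reduced to the explicit identification $\{\lambda_1,\lambda_2,\lambda_3\} = \{a,b,c\}$.

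To compute the scalars I would exploit the ribbon structure of the category of $U_q(sl(2|1))$-representations. The twist of a simple typical module $V(m,\mu)$ has the form $\theta_{V(m,\mu)} = q^{Q(m,\mu)}$ for a quadratic function $Q$ determined by the super Cartan datum, and the standard ribbon identity $\mathscr R_{V,V}^{\,2} = (\theta_V\otimes \theta_V)\,\theta_{V\otimes V}^{-1}$ gives on each simple summand $W$
$$\mathscr R_{V(0,\alpha),V(0,\alpha)}^{\,2}\big|_{W} \;=\; \theta_{V(0,\alpha)}^{\,2}\,\theta_{W}^{-1}\,\mathrm{Id}_{W}.$$
Evaluating this on $W=V(0,2\alpha),\ V(0,2\alpha+1),\ V(1,2\alpha)$ and taking the appropriate square root produces the values $b=q^{-2\alpha^2}$, $c=q^{-2(\alpha+1)^2}$, $a=-q^{-2\alpha(\alpha+1)}$. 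The minus sign in $a$ is not a free choice: it stems from the super-flip $\tau^s$ in the definition $\mathscr R = \tau^s\circ R$ (Proposition \ref{Rmatrix}), and corresponds to the fact that the summand $V(1,2\alpha)$ lives in the odd-odd component of the super tensor product $V(0,\alpha)\otimes V(0,\alpha)$.

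The main obstacle is this last step, especially resolving the sign ambiguity when passing from $\mathscr R^{\,2}$ to $\mathscr R$ itself. A concrete alternative is to exhibit highest weight vectors $w_{(0,2\alpha)},\ w_{(0,2\alpha+1)},\ w_{(1,2\alpha)}$ in each simple summand explicitly as combinations of the natural basis of $V(0,\alpha)^{\otimes 2}$, then apply the explicit series expression for $R$ from Proposition \ref{Rmatrix} composed with the super-flip, and read off the scalar on each. Finally, the hypothesis that $\{1,\alpha,\alpha^2\}$ is $\Q$-linearly independent guarantees that the exponents $-2\alpha^2,\ -2\alpha(\alpha+1),\ -2(\alpha+1)^2$ give pairwise distinct scalars even after accounting for the sign on $a$; this ensures that $(X-a)(X-b)(X-c)$ is the actual minimal polynomial, so the factorization $\tilde\rho_n(\alpha)$ is well-defined and non-degenerate.
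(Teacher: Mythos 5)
The paper does not prove this proposition; it is quoted from \cite{MW}. So what you have produced is a genuine proof sketch rather than a reproduction, and it follows the standard route one would expect \cite{MW} to take: reduce to the generator $\sigma_1$, use the multiplicity-free decomposition $V(0,\alpha)^{\otimes 2}=V(0,2\alpha)\oplus V(0,2\alpha+1)\oplus V(1,2\alpha)$ from Theorem \ref{Thm}, invoke Schur on each simple summand, and identify the three eigenvalues of $\mathscr R_{V(0,\alpha),V(0,\alpha)}$ with $a,b,c$. That skeleton is correct and is essentially the argument one finds in the literature for statements of this type.

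Three remarks on the filling-in. First, your ribbon identity is written upside down relative to the usual convention $\theta_{V\otimes V}=c_{V,V}^{2}\circ(\theta_V\otimes\theta_V)$, which gives $c_{V,V}^{2}\big|_W=\theta_W\,\theta_{V(0,\alpha)}^{-2}\cdot\mathrm{Id}_W$ rather than its inverse; this is a harmless convention clash but will flip all the signs of your exponents if you are not careful. Second, the justification you give for the minus sign in $a$ --- that $V(1,2\alpha)$ ``lives in the odd-odd component'' of the super tensor product --- is not quite right as stated, since each simple summand of $V(0,\alpha)\otimes V(0,\alpha)$ is itself a $\Z_2$-graded module meeting both parities; what actually determines the sign is whether the super-flip $\tau^s$ acts by $+1$ or $-1$ on the highest weight vector of that summand (equivalently, the parity of that specific vector, not of the whole summand). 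Your alternative suggestion --- exhibit the three highest weight vectors explicitly and apply $\tau^s\circ R$ to them --- is the cleaner way to pin this down and is what I would recommend. Third, the role of the hypothesis that $\{1,\alpha,\alpha^2\}$ is $\Q$-linearly independent is slightly misattributed: the factorization $\tilde\rho_n(\alpha)$ exists as soon as the minimal polynomial of $\rho_n(\alpha)(\sigma_1)$ \emph{divides} $(X-a)(X-b)(X-c)$, which needs only the decomposition and Schur; the linear independence is what guarantees $a,b,c$ pairwise distinct, so that the cubic really is the minimal polynomial and $H_n(\alpha)$ is a reduced cubic Hecke algebra with three distinct eigenvalues, which is what the rest of the Marin--Wagner theory requires. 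Distinctness is not a prerequisite for ``well-defined''.
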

In \cite{I}, Ishii introduced a relation $r_2$ in $H_3(\alpha)$ and showed that
this relation is in the kernel of $\tilde{\rho}_3(\alpha)$. However, for the braid group with $4$ strands, one needs more relations in order to generate the kernel of this map. In \cite{MW}, the authors defined a relation $r_3 \in H_4(\alpha)$ and proved that $H_4/ (r_2,r_3)\simeq LG_4$. They conjectured that these relations are enough in order to describe the kernel of $\rho_n(\alpha)$ for all values of $n\in \N$.
\begin{definition}
Consider the following quotient algebra defined as:
$$A_n(\alpha):=H_n(\alpha)/ (r_2,r_3).$$
\end{definition}
\begin{conj}(Marin-Wagner; Conjecture \ref{conj2})
For any number of strands $n \in \N $, there is the following algebra isomorphism:
$$A_n(\alpha) \simeq LG_n(\alpha).$$
\end{conj}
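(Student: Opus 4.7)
The plan is to establish the isomorphism by combining a surjection $A_n(\alpha) \twoheadrightarrow LG_n(\alpha)$ with a matching dimension count. The surjection $\tilde{\rho}_n(\alpha)\colon H_n(\alpha) \twoheadrightarrow LG_n(\alpha)$ is already produced by Marin-Wagner, and since $r_2 \in H_3(\alpha)$ and $r_3 \in H_4(\alpha)$ both lie in the appropriate kernels, their canonical embeddings into $H_n(\alpha)$ for every $n\geq 3$ (respectively $n\geq 4$) also lie in $\ker \tilde{\rho}_n(\alpha)$, so $\tilde{\rho}_n(\alpha)$ factors through a surjection $\bar{\rho}_n(\alpha)\colon A_n(\alpha) \twoheadrightarrow LG_n(\alpha)$. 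The problem thus reduces to the inequality $\dim A_n(\alpha) \leq \dim LG_n(\alpha)$, where the right-hand side is now known explicitly by Theorem \ref{conj}.

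For the upper bound, I would try to construct, for every symmetric planar curve $\sigma$ labelling a matrix unit in the basis for $LG_n(\alpha)$ produced in Theorem \ref{Thm3}, a preimage $\tilde{e}_{\sigma}\in A_n(\alpha)$ with $\bar{\rho}_n(\alpha)(\tilde{e}_{\sigma}) = \sigma^{\star}$, and then verify that these $\tilde{e}_{\sigma}$ span $A_n(\alpha)$. The bijection from the combinatorial part of the paper writes each symmetric curve as a pair of paths built from the moves $M_1, M_2, M_3, M_4$; the plan is to interpret each elementary move as a word in the cubic Hecke generators (an idempotent for $M_1$, a braid generator for $M_2$, and variants thereof for $M_3$ and $M_4$) and to concatenate these words to define $\tilde{e}_{\sigma}$. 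Compatibility with the tower inclusion $LG_{n-1}(\alpha)\subseteq LG_n(\alpha)$ will give an inductive handle.

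A complementary, more intrinsic route is to run induction on $n$, setting up a Bratteli-style decomposition of $A_n(\alpha)$ as an $A_{n-1}(\alpha)$-bimodule whose simple components are indexed by the same lattice points $(x,y)\in\Delta_{n-1}$ that control $LG_n(\alpha)$ via Theorem \ref{Thm1}. In the inductive step, one rewrites any element of $A_n(\alpha)$ of the form (word in $A_{n-1}(\alpha)$)$\cdot\sigma_{n-1}^{k}\cdot$(word in $A_{n-1}(\alpha)$), using the cubic relation to bound $k$, $r_2$ to rewrite length-three subwords in the last three strands, and $r_3$ to rewrite length-four subwords in the last four strands, until only branching patterns matching the four permitted moves remain. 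The upshot would be that the resulting normal forms are in bijection with the paths of Theorem \ref{Thm1}, and hence count exactly $\dim LG_n(\alpha)$.

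The main obstacle I expect is the upper bound: the cubic Hecke algebras $H_n(\alpha)$ grow very rapidly (and are not even known to be finite-dimensional for all $n$ and generic parameters), so the relations $r_2$ and $r_3$, which are local to three and four strands, must be used globally and in a carefully choreographed order to cut the algebra down to the correct size. A rewriting system approach will face confluence issues, since $r_2$ and $r_3$ overlap nontrivially when translated across strands; conversely, the matrix-unit approach needs a clean inductive recipe for lifting Jones-Wenzl-type idempotents of $LG_n(\alpha)$ through $\bar{\rho}_n(\alpha)$, which in turn probably requires combining the explicit semisimple decomposition of Theorem \ref{Thm} with the combinatorial model of Theorem \ref{Thm1} to track the action of the $R$-matrix on each multiplicity space.
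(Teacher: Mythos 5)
The statement you are trying to prove is \emph{not proved in the paper}: it appears only as Conjecture~\ref{conj2} (attributed to Marin--Wagner), restated in Section~\ref{sec4}, and the introduction explicitly says that using the matrix-unit model of Theorem~\ref{Thm3} to understand $\rho_n(\alpha)$ is deferred to ``a future paper.'' What the paper actually proves is the dimension formula of Theorem~\ref{conj} and the combinatorial matrix-unit basis of Theorem~\ref{Thm3}; those are inputs that any eventual proof of $A_n(\alpha)\simeq LG_n(\alpha)$ would presumably use, and your proposal correctly recognizes them as such, but they do not themselves settle the conjecture.

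Your reduction of the problem is sound and standard: Marin--Wagner already give surjectivity of $\tilde\rho_n(\alpha)\colon H_n(\alpha)\twoheadrightarrow LG_n(\alpha)$, and since $r_2,r_3\in\ker\tilde\rho_n(\alpha)$ under the natural embeddings of $H_3,H_4$ into $H_n$, this descends to a surjection $\bar\rho_n(\alpha)\colon A_n(\alpha)\twoheadrightarrow LG_n(\alpha)$. Combined with Theorem~\ref{conj}, the whole conjecture is equivalent to the inequality $\dim A_n(\alpha)\leq\frac{(2n-2)!(2n-1)!}{((n-1)!\,n!)^2}$. But this upper bound is exactly the open content, and neither of your two proposed routes is carried to completion. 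Lifting the matrix-unit basis of Theorem~\ref{Thm3} through $\bar\rho_n(\alpha)$ and showing the lifts span $A_n(\alpha)$ requires producing explicit preimages and a spanning argument inside the quotient $H_n(\alpha)/(r_2,r_3)$ — nothing in the paper identifies such preimages or shows that any proposed set spans. The rewriting/normal-form route has the confluence problem you yourself flag, and additionally needs finite-dimensionality of $A_n(\alpha)$ as a starting point, which is not established here. Your assessment of where the difficulty lies is accurate; what is missing is the actual execution of one of those two strategies, which is precisely what the paper also leaves open.
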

In the following two sections, we prove the Conjecture Marin-Wagner (Theorem \ref{conj}). We summarize all the steps that occur in these sections in diagram \ref{diagr}.

\section{Combinatorial model for the isotypic components corresponding to $V(0,\alpha)^{\otimes n}$} \label{sec5}

This is the first part of the proof and concerns the isotypic components that occur in the decomposition of tensor powers of $V(0, \alpha)^{\otimes n}$. Our aim is to understand them in a combinatorial manner and to prove Theorem\ref{Thm1}.

In order to control the multiplicity spaces of the isotypic components of $V(0,\alpha)^{\otimes n}$, we will encode the semi-simple decomposition of $V(0,\alpha)^{\otimes n}$ by a certain planar diagram $D(n)$. Each point from this diagram will have a weight, which corresponds to a certain multiplicity of a simple representation inside the the $n^{th}$ tensor power of $V(0,\alpha)$.
The next part interpolates between the weights that occur in the diagram $D(n)$, which are algebraic quantities and certain sets of planar paths, described in purely combinatorial terms. More specifically, we show that each weight from the diagram $D(n)$ can be seen as a counting problem associated to a certain set of paths in the plane, which have fixed length and are constructed from the origin using certain prescribed possible moves. As a conclusion we will obtain a model for the multiplicities of the isotypic components in terms of a combinatorial counting problem as in the statement from \ref{Thm1}. 

\

As we have seen
$$LG_n(\alpha)=End_{U_q(sl(2|1))}\left(V(0,\alpha)^{\otimes n}\right).$$
We are interested in the semi-simple decomposition of $V(0,\alpha)^{\otimes n}$. Since $\alpha \notin \Q $, one can conclude by an inductive argument that for any $n \in \N$, the tensor power $V(0,\alpha)^{\otimes n}$ is semi-simple and the formula given in Theorem \ref{Thm} can be applied at each step. More precisely, one remarks that all the modules that occur in the decomposition of $V(0,\alpha)^{\otimes n}$ have the form $$V(x, n\alpha+y)  \ \ \ \ \text{for} \ \ \ \ x,y \in \{0,...,n-1\}$$
\begin{notation}\label{mult}
Let us denote this semi-simple decomposition by:
$$V(0,\alpha)^{\otimes n}=\bigoplus_{x,y\in \N\times \N}  T_n(x,y)\otimes V(x,n\alpha+y) $$ where $T_n(x,y)$ is the multiplicity space corresponding to the weight $(x,n\alpha+y)$. 
\end{notation}
We will codify this decomposition by a graph in the plane with integer coordinates, where each point will have a certain "weight".
\begin{definition}
We say that $D(n)$ is a diagram for $V(0,\alpha)^{\otimes n}$ if it is included in the lattice with integer coordinates and each point $(x,y)\in D(n)$ has associated a certain natural number, called weight, which is the following multiplicity: $$t_n(x,y)=dim \ T_n(x,y).$$ 

This encodes in the position $(x,y)$ from the diagram $D(n)$, the multiplicity of the highest weight module $V(x,n \alpha+y)$.
For the simplicity of the notation, we will think that the origin of the diagram $D(n)$ has the coordinates $(0,n \alpha)$. In this way, on the position $(x,y)$ from the diagram, we encode the multiplicity of the module whose fundamental weight is $(x, n\alpha+y)$, which means that it moves away from the origin with $x$ steps from $0$ and $y$ steps from $n\alpha$.\\
\end{definition}
As we can see, by reading the non-zero multiplicities associated to the points from the diagram $D(n)$, we can deduce the tensor decomposition of $V(0,\alpha)^{\otimes n}$ . Let us see some examples:
\begin{center}

$ Case \ \ \bold{ n=2 }$
$$V(0,\alpha)\otimes V(0,\alpha)=V(0,2\alpha)\oplus V(0,2\alpha+1)\oplus V(1,2\alpha). $$

\begin{equation}
\begin{split}
\begin{tikzpicture}
[x=1mm,y=1mm,scale=1,font=\large]
\node at (-25,12) [anchor=north east] { $\text{\Large D(2)}$ };
\draw[step=10mm,black!20,very thin] (0,0) grid (20,20);
\draw[very thick,black!50!blue] (10,0) -- +(-10,10);
\draw[->] (0,-5) -- (0,25);
\draw[->] (-5,0) -- (25,0);
\foreach \x/\y in {0/0, 10/0, 0/10} {\node at (\x,\y) [circle,fill,inner sep=1pt] {};}
\node at (0,0) [anchor=north east] {$\color{red}{1}$};
\node at (10,0) [anchor=north east] {$\color{red}1$};
\node at (0,10) [anchor=north east] {$\color{red}1$};
\node at (-3,-3) [anchor=north east] {$\color{black} \left( 0,2 \alpha \right)$};
\node at (19,-3) [anchor=north east] {$\color{black} \left( 1,3 \alpha \right)$};
\node at (-3,19) [anchor=north east] {$\color{black} \left( 0,3 \alpha+1 \right)$};
\end{tikzpicture}
\end{split}
\end{equation}
\end{center}
\begin{center}
$Case \ \ \bold{ n=3} $
$$V(0,\alpha)^{\otimes3}=\left( V(0,2\alpha)\oplus V(0,2\alpha+1)\oplus V(1,2\alpha)\right)\otimes V(0,\alpha)=$$
$$=\left( V(0,2\alpha)\otimes V(0,\alpha) \right) \oplus \left( V(0,2\alpha+1)\otimes V(0,\alpha)\right) \oplus \left( V(1,2\alpha)\otimes V(0,\alpha) \right)=$$
$$=\left( V(0,3\alpha) \oplus V(0,3\alpha+1) \oplus V(1,3\alpha) \right) \oplus$$
$$ \oplus  \left( V(0,3\alpha+1) \oplus V(0,3\alpha+2) \oplus V(1,3\alpha+1) \right) \oplus$$
$$ \oplus \left( V(1,3\alpha) \oplus V(1,3\alpha+1) \oplus V(0,3\alpha+1) \oplus V(2,3\alpha) \right).$$
We conclude that the semi-simple decomposition of the third tensor power is the following:
$$V(0,\alpha)^{\otimes3}= V(0,3\alpha) \oplus 3\cdot V(0,3\alpha+1) \oplus V(0,3\alpha+2) $$
$$ \oplus 2\cdot V(1,3\alpha)\oplus 2\cdot V(1,3\alpha+1)\oplus V(2,3\alpha).$$
\end{center}
We obtain the diagram for $D(3)$ in the following way:
\begin{center}
\begin{equation}
\begin{split}
\begin{tikzpicture}
[x=1mm,y=1mm,scale=1,font=\large]
\node at (-25,15) [anchor=north east] { $\text{\Large D(3)}$ };
\draw[step=10mm,black!20,very thin] (0,0) grid (30,30);
\draw[very thick,black!50!blue] (20,0) -- +(-20,20);
\draw[->] (0,-5) -- (0,35);
\draw[->] (-5,0) -- (35,0);
\foreach \x/\y in {0/0, 10/0, 20/0, 0/10, 10/10, 0/20} {\node at (\x,\y) [circle,fill,inner sep=1pt] {};}
\node at (0,0) [anchor=north east] {$\color{red}1$};
\node at (10,0) [anchor=north east] {$\color{teal}2$};
\node at (20,0) [anchor=north east] {$\color{red}1$};
\node at (0,10) [anchor=north east] {$\color{blue}3$};
\node at (10,10) [anchor=north east] {$\color{teal}2$};
\node at (0,20) [anchor=north east] {$\color{red}1$};
\node at (-3,-3) [anchor=north east] {$\color{black} \left( 0,3 \alpha \right)$};
\node at (30,-3) [anchor=north east] {$\color{black} \left( 2,3 \alpha \right)$};
\node at (-3,27) [anchor=north east] {$\color{black} \left( 0,3 \alpha+2 \right)$};
\end{tikzpicture}
\end{split}
\end{equation}
\end{center}
\begin{center}
$$ Case \ \ \bold {n=4}$$
$$V(0,\alpha)^{\otimes4}=V(0,\alpha)^{\otimes3} \otimes V(0,\alpha)=$$
$$= \left( (V(0,3\alpha)\otimes V(0,\alpha) \right) \oplus 3\cdot \left( V(0,3\alpha+1) \otimes V(0,\alpha) \right) \oplus
\left( V(0,3\alpha+2) \otimes V(0,\alpha) \right) \oplus$$
$$ \oplus 2\cdot \left( V(1,3\alpha) \otimes V(0,\alpha) \right) \oplus 2\cdot \left( V(1,3\alpha+1) \otimes V(0,\alpha) \right) \oplus \left( V(2,3\alpha) \otimes V(0,\alpha) \right)=$$
$$=  \left( V(0,4\alpha) \oplus V(0,4\alpha+1) \oplus V(1,4\alpha) \right) \oplus$$
$$\oplus \left( 3V(0,4\alpha+1) \oplus 3V(0,4\alpha+2) \oplus 3V(1,4\alpha+1) \right) \oplus$$
$$ \oplus \left( V(0,4\alpha+2) \oplus V(0,4\alpha+3) \oplus V(1,4\alpha+2) \right) \oplus$$
$$ \oplus \left( 2V(1,4\alpha) \oplus 2V(1,4\alpha+1) \oplus 2V(0,4\alpha+1)\oplus 2V(2,4\alpha) \right) \oplus$$
$$ \oplus \left( 2V(1,4\alpha+1) \oplus 2V(1,4\alpha+2) \oplus 2V(0,4\alpha+2)\oplus 2V(2,4\alpha+1) \right) \oplus $$
$$ \oplus \left( V(2,4\alpha) \oplus V(2,4\alpha+1) \oplus V(1,4\alpha+1)\oplus V(3,4\alpha) \right).$$
We conclude that the semi-simple decomposition of the fourth tensor power is the following:
$$V(0,\alpha)^{\otimes4}=V(0,4\alpha) \oplus 6\cdot V(0,4\alpha+1) \oplus 6\cdot V(0,4\alpha+2) \oplus$$ 
$$\oplus V(0, 4\alpha+3) \oplus 3\cdot V(1,4\alpha) \oplus 8\cdot V(1,4\alpha+1) \oplus$$
$$\oplus 3\cdot V(1,4\alpha+2)\oplus 3\cdot V(2,4\alpha)\oplus 3\cdot V(2,4\alpha+1)\oplus V(3,4\alpha).$$
\end{center}
We obtain diagram $D(4)$ with the following weights:\\ 
\begin{center}
\begin{equation}
\begin{split}
\begin{tikzpicture}
[x=1mm,y=1mm,scale=1,font=\large]
\node at (-25,17) [anchor=north east] { $ \text{\Large{D(4)}}$ };
\draw[step=10mm,black!20,very thin] (0,0) grid (30,30);
\draw[very thick,black!50!blue] (30,0) -- +(-30,30);
\draw[->] (0,-5) -- (0,35);
\draw[->] (-5,0) -- (35,0);
\foreach \x/\y in {0/0, 10/0, 20/0, 30/0, 0/10, 10/10, 20/10, 0/20, 10/20, 0/30} {\node at (\x,\y) [circle,fill,inner sep=1pt] {};}
\node at (0,0) [anchor=north east] {$\color{red}1$};
\node at (10,0) [anchor=north east] {$\color{teal}3$};
\node at (20,0) [anchor=north east] {$\color{teal}3$};
\node at (30,0) [anchor=north east] {$\color{red}1$};
\node at (0,10) [anchor=north east] {$\color{blue}6$};
\node at (10,10) [anchor=north east] {$\color{orange}8$};
\node at (20,10) [anchor=north east] {$\color{teal}3$};
\node at (0,20) [anchor=north east] {$\color{blue}6$};
\node at (10,20) [anchor=north east] {$\color{teal}3$};
\node at (0,30) [anchor=north east] {$\color{red}1$};
\node at (-4,-4) [anchor=north east] {$\color{black} \left( 0,4 \alpha \right)$};
\node at (40,-4) [anchor=north east] {$\color{black} \left( 3,4 \alpha \right)$};
\node at (-4,37) [anchor=north east] {$\color{black} \left( 0,4 \alpha+3 \right)$};
\end{tikzpicture}
\end{split}
\end{equation}
\end{center}

In the sequel, we will describe how the sequence of diagrams $D(n)$, can be constructed following an inductive procedure. More precisely, we will suppose that we know the diagram $D(n)$. Then, we will define a set of precise local moves such that if we apply these moves to each point from the diagram $D(n)$, we will be able to reconstruct the diagram $D(n+1).$

Let us start with a module $V(x,n\alpha+y)$ for $x,y \in \{0,...,n-1\}$. We will encode diagramatically the effect of tensoring this module with a standard module $V(0, \alpha)$. More specifically, as before, we will encode the simple modules that occur in the decomposition of $V(x,n\alpha+y)\otimes V(0,\alpha)$ in a weighted lattice. Let us think that initially, $V(x,n\alpha+y)$ is encoded by diagram $D$ which has just one point at the origin $(0,n\alpha)$ and the corresponding multiplicity $1$.\\ 

\begin{definition}
${\bold a)}$ From the $Theorem$ \ref{Thm}, for any $x \in \N\setminus\{0\}$ and any $y \in \N, n\in \N$ we have the decomposition:
$$V(0,\alpha)\otimes V(x,n \alpha+y)=V(x,(n+1)\alpha+y) \oplus V(x+1,(n+1)\alpha+y) \oplus $$ 
$$ \oplus V(x-1,(n+1)\alpha+y+1)\oplus V(x,(n+1)\alpha+y+1).$$
We call the effect of tensoring $V(x,n \alpha+y)$ with $V(0,\alpha)$ a ``blow up of type $(x,y)$'' and $B(x,y)$ the new corresponding diagram:

\begin{center}
\begin{equation}\label{eq:bla2}
\begin{split}
\begin{tikzpicture}
[x=1mm,y=1mm,scale=1,font=\large]
\draw[step=10mm,black!20,very thin] (0,0) grid (20,20);
\draw[->,black!20] (0,-5) -- (0,25);
\draw[->,black!20] (-5,0) -- (25,0);
\foreach \x/\y in {0/0, 10/0, 0/10, -10/10} {\node at (\x,\y) [circle,fill,inner sep=1pt] {};}
\node at (-40,11) [anchor=north east] { $\text{\Large{B(x, y)}}$ };
\node at (0,0) [anchor=north east] {$\left( x, y \right)$};
\node at (-10,15) [anchor=north east] {$\left( x-1, y \right)$};
\node at (12,15) [anchor=north east] {$\left( x, y+1 \right)$};
\node at (20,0) [anchor=north east] {$\left( x+1, y \right)$};
\draw[->,dashed,thick] (0,0) -- (0,10);
\draw[->,dashed,thick] (0,0) -- (10,0);
\draw[->,dashed,thick] (0,0) -- (-10,10);
\end{tikzpicture}
\end{split}
\end{equation}
\end{center}
$\bold {b)}$ If the first coordinate $x=0$, then for any $y \in \N$ and $n \in \N$, the decomposition is as follows:
$$V(0,\alpha)\otimes V(0,n \alpha+y)=V(0,(n+1)\alpha+y)\oplus V(1,(n+1)\alpha+y)\oplus V(0,(n+1)\alpha+y+1).$$
We call the effect of tensoring $V(0,n \alpha+x)$ with $V(0,\alpha)$ a ``blow up of type $(0,y)$'' and $B(0,y)$ the new corresponding diagram. 
\begin{center}
\begin{equation}\label{eq:bla2}
\begin{split}
\begin{tikzpicture}
[x=1mm,y=1mm,scale=1,font=\large]
\draw[step=10mm,black!20,very thin] (0,0) grid (20,20);
\draw[->,black!20] (0,-5) -- (0,25);
\draw[->,black!20] (-5,0) -- (25,0);
\foreach \x/\y in {0/0, 10/0, 0/10} {\node at (\x,\y) [circle,fill,inner sep=1pt] {};}
\node at (-40,11) [anchor=north east] { $\text{\Large{B(0, y)}}$ };
\node at (0,15) [anchor=north east] {$\left( 0, y+1 \right)$};
\node at (20,0) [anchor=north east] {$\left( 1, y \right)$};
\node at (-2,0) [anchor=north east] {$\left(0, y\right)$};
\draw[->,dashed,thick] (0,0) -- (0,10);
\draw[->,dashed,thick] (0,0) -- (10,0);
%
\end{tikzpicture}
\end{split}
\end{equation}
\end{center}
\end{definition}
\begin{lemma}
The diagram $D(n+1)$ can be obtained from $D(n)$, by blowing up each point $(x,y)\in D(n)$ with $B(x,y)$ for $t_n(x,y)$ times and add in each vertex all the new multiplicities.
\end{lemma}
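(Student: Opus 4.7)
The strategy is direct: unwind the semi-simple decomposition of $V(0,\alpha)^{\otimes(n+1)}$ by factoring off the last tensor factor $V(0,\alpha)$, and identify the result with the prescribed blow-up construction term by term.

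First I would start from the inductive hypothesis, namely the decomposition recorded in Notation \ref{mult}:
$$V(0,\alpha)^{\otimes n}=\bigoplus_{(x,y)\in \N\times\N} T_n(x,y)\otimes V(x,n\alpha+y),$$
and then tensor on the right with $V(0,\alpha)$. Since tensoring with a fixed module is exact and distributes over direct sums, I would rewrite
$$V(0,\alpha)^{\otimes(n+1)}=\bigoplus_{(x,y)} T_n(x,y)\otimes\bigl(V(x,n\alpha+y)\otimes V(0,\alpha)\bigr).$$
At this point the key input is Theorem \ref{Thm}: it provides the four-term (respectively three-term) decomposition of each factor $V(x,n\alpha+y)\otimes V(0,\alpha)$, depending on whether $x>0$ or $x=0$. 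These four (respectively three) summands are exactly the modules encoded by the blow-up diagrams $B(x,y)$ and $B(0,y)$ from the preceding definition.

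Next I would substitute these local decompositions back into the global one. Each summand $T_n(x,y)\otimes V(x,n\alpha+y)$ contributes, after blow-up, a copy of each of the modules $V(x',(n+1)\alpha+y')$ with $(x',y')$ given by the four arrows (or three, if $x=0$), each with multiplicity $t_n(x,y)=\dim T_n(x,y)$. Collecting equal isotypic classes across all points $(x,y)\in D(n)$ and comparing with
$$V(0,\alpha)^{\otimes(n+1)}=\bigoplus_{(x',y')} T_{n+1}(x',y')\otimes V(x',(n+1)\alpha+y'),$$
gives the recursion
$$t_{n+1}(x',y')=\sum_{(x,y)\in D(n)} t_n(x,y)\cdot m_{(x,y)}(x',y'),$$
where $m_{(x,y)}(x',y')\in\{0,1\}$ records whether $(x',y')$ appears as one of the end-points of the arrows of $B(x,y)$. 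This identity is exactly the statement of the lemma: superimpose a copy of $B(x,y)$ at each point $(x,y)\in D(n)$ scaled by the weight $t_n(x,y)$, then add weights at coincident vertices.

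I do not anticipate a real obstacle; the only point requiring care is the boundary case $x=0$, where the move $M4$ (and hence the arrow leaving the vertex to the north-west) is absent. This is handled precisely by the separate clause (b) in the definition of the blow-up $B(0,y)$, which is consistent with part 2) of Theorem \ref{Thm}. Once this case distinction is in place, the inductive step follows from a routine rearrangement of direct sums.
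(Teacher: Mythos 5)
Your proposal matches the paper's proof: both start from the inductive decomposition of $V(0,\alpha)^{\otimes n}$, tensor on the right with $V(0,\alpha)$, distribute, apply Theorem \ref{Thm} to each factor $V(x,n\alpha+y)\otimes V(0,\alpha)$, and recognize the resulting summands as the blow-up diagrams $B(x,y)$ (or $B(0,y)$ at the boundary). Your version is written out in somewhat more detail—introducing the indicator $m_{(x,y)}(x',y')$ and explicitly noting the boundary case $x=0$—but the argument is the same.
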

\begin{proof}
Suppose we have $D(n)$. This means that:
$$V(0,\alpha)^{\otimes n}=\bigoplus_{x,y\in \N\times \N} \ \left(t_n(x,y)\cdot V(x,n\alpha+y) \right) $$ 
In order to obtain the multiplicities that occur in $D(n+1)$, we have:\\
$$V(0,\alpha)^{\otimes {n+1}}=\bigoplus_{x,y\in \N\times \N} \left( t_n(x,y)\cdot \left(V(x,n\alpha+y)\otimes V(0,\alpha)\right) \right) \ (\ast)$$
On the other hand, the multiplicities $t_{n+1}$ occur in the following way:\\
$$V(0,\alpha)^{\otimes {n+1}}=\bigoplus_{x,y\in \N\times \N}\left( t_{n+1}(x,y)\cdot V(x,(n+1)\alpha+y) \right) $$
Using the previous formula $(\ast )$, we notice that the diagram $D(n+1)$, has some extra weights with respect to the diagram $D(n)$. More precisely, each term $(V(x,n\alpha+y)\otimes V(0,\alpha))$ will add to the weights from $D(n)$, some extra multiplicities corresponding to a blow up of center
$$\left(x+0,(n\alpha+y)+\alpha \right)= \left(x,(n+1)\alpha+y \right).$$
This is encoded in $D(n+1)$ as the blow-up $B(x,y)$ with center $(x,y)$. Counting the multiplicities, for each point $(x,y)$, we'll have to do the blow-up $B(x,y)$ for $t_n(x,y)$ times. In this way, we obtain $t_{n+1}(x,y)$.
\end{proof}
Up to this point, we saw how to construct the recursive relation that relates $D(n)$ and $D(n+1)$. However, this is still at the theoretical level. In the following part, we will use the fact that we know the initial step corresponding to the diagram $D(1)$, and apply the previous recursive relation. Finally, we will conclude that each multiplicity space $T_n(x,y)$ can be described in a natural way, using a method which constructs a set of paths in the plane, which satisfy certain restrictions.\\
\begin{remark}
Let us fix a point in the plane, with coordinates $(x,y)$. Then, the weight of $(x,y)$ in the diagram $D(n+1)$, $t_{n+1}(x,y)$ can be obtained by adding all the multiplicities of the points $(x',y')$ from $D(n)$, which can arrive to the point $(x,y)$ using one of the following moves:
$$M1) \ \ \ \ \ \ \ \text{stay  move} \ \ \ \ \ (x',y')\rightarrow (x',y') \ \ \ \ \ $$
$$M2) \ \ \ \ \ \  \ \longrightarrow  \ \ \ \ \ \ \ \ \ \ \ \ (x',y')\rightarrow (x'+1,y')$$
$$M3) \ \ \ \ \ \ \ \ \ \uparrow  \ \ \ \ \ \ \ \ \ \ \  \ \ (x',y')\rightarrow (x',y'+1) $$
$$ \ \ \ \ \ \ \ \ \ \ \ \ \ \ \ M4)  \ \ \ \ \ \ \ \ \nwarrow  \ \ \ \ \ \ \ \ \ \ \ \ (x',y')\rightarrow (x'-1,y'+1) \text{ if } x'>0$$
\end{remark}

Here, the reason for having the condition that the move $M_4$ can be done just if $x>0$, is the fact that a coefficient that decreases the $y$ coordinate occurs in the blow-up $B(x,y)$ if and only if $x>0$.
\begin{remark}
1) If we start from $D(n-1)$, we can obtain $D(n+1)$, by counting certain paths of length $2$ in the integer lattice (with the corresponding multiplicities as in $D(n-1)$.
In this way, applying twice the first remark we conclude that: 
$$t_{n+1}(x,y)= \text{card} \ \{ \  \text {paths of length 2 starting from points in } D(n-1) \ \text {which} $$
$$ \ \ \ \ \ \ \ \ \ \ \ \ \ \ \ \ \ \ \ \ \ \ \ \ \ \ \ \ \ \text{end in } (x,y), \text{ with the possible moves} \ M_1, M_2 , M_3 \text{ or} \ M_4 \ \} .$$

2) We will iterate this argument by induction, using as initial data the diagram $D(1)$ which has the following form:
\begin{equation}
\begin{split}
\begin{tikzpicture}
[x=1mm,y=1mm,scale=1/3,font=\large]
\draw[step=10mm,black!20,very thin] (0,0) grid (20,20);
\draw[->] (0,-5) -- (0,25);
\draw[->] (-5,0) -- (25,0);
\foreach \x/\y in {0/0} {\node at (\x,\y) [circle,fill,inner sep=1pt] {};}
\node at (0,0) [anchor=north east] {$\color{red}1$};
\node at (-60,15) [anchor=north east] { $D(1)$ };
\end{tikzpicture}
\end{split}
\end{equation}
We obtain the following combinatorial description for the multiplicity spaces:
\end{remark}
\begin{theorem}\label{multiplicities}
In $D(n+1)$, for each point $(x,y)\in \Z\times \Z$ the associated multiplicity has the formula:
$$t_{n+1}(x,y)= \text {number of paths in the plane from} \ (0,0) \ \text {to} \ (x,y) \text{ of length } \ (n) $$
$$ \ \ \ \ \ \ \ \  \text {and possible moves M1, M2, M3 or M4 with the condition that} $$
$$ \ \ \ \ \ \ \ \ \ \ \ \text{ they do not have any point with a negative coordinate on the x-axis.}$$
\end{theorem}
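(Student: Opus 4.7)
The plan is to prove Theorem \ref{multiplicities} by induction on $n \in \N$, using as the inductive engine the recursive relation between $D(n)$ and $D(n+1)$ established in the preceding lemma together with the local description of multiplicities in the remark just above.

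For the base case $n=0$, the diagram $D(1)$ encodes the decomposition of $V(0,\alpha)$ itself: it has a single point at $(0,0)$ with multiplicity $t_1(0,0)=1$, and all other multiplicities vanish. This matches the unique trivial path of length $0$ from $(0,0)$ to $(0,0)$, and no path of length $0$ reaches any other point.

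For the inductive step, assume that for some $n \geq 1$ the multiplicity $t_n(x',y')$ equals the number of length $(n-1)$ paths from $(0,0)$ to $(x',y')$ using moves $M_1,M_2,M_3,M_4$, with the restriction that $M_4$ is taken only when the current $x$-coordinate is strictly positive. By the lemma, the weight $t_{n+1}(x,y)$ is obtained from $D(n)$ by performing the blow-ups $B(x',y')$ with multiplicities $t_n(x',y')$ and summing the contributions at $(x,y)$. The remark translates this into the statement that
\[
t_{n+1}(x,y) \;=\; \sum_{(x',y')\, \to\, (x,y)} t_n(x',y'),
\]
where the sum ranges over all $(x',y')$ from which one of the moves $M_1, M_2, M_3$, or $M_4$ lands at $(x,y)$, with $M_4$ only permitted when $x'>0$. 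A length $n$ path to $(x,y)$ is determined by its first $n-1$ steps (reaching some $(x',y')$) followed by one admissible final move, so applying the induction hypothesis inside the sum produces exactly the number of length $n$ paths from $(0,0)$ to $(x,y)$ satisfying the required conditions.

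The only subtle point to verify carefully is the bookkeeping around the boundary $x=0$: the blow-up $B(0,y)$ genuinely lacks the northwest arrow whereas $B(x,y)$ for $x>0$ includes it, and this is precisely what forces the condition $x'>0$ on $M_4$ in the recursion. Consequently, every path counted by the right-hand side remains in the closed half-plane $\{x \geq 0\}$, which corresponds to the non-existence of modules $V(x,\cdot)$ with negative $x$ in the semi-simple decomposition. This is the key combinatorial/representation-theoretic check; once it is in place, the induction closes without further difficulty.
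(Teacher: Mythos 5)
Your proposal is correct and is essentially the same argument the paper intends: the paper establishes the one-step blow-up recursion $t_{n+1}(x,y)=\sum_{(x',y')\to(x,y)} t_n(x',y')$ (Lemma plus the remark on moves $M_1$--$M_4$) and then observes, via the iterated remark starting from $D(1)$, that this unwinds to a count of length-$n$ paths; you have simply phrased this unwinding as a clean induction on $n$, with the correct base case at $D(1)$ and the correct handling of the $x'>0$ restriction on $M_4$, which is the only place where the blow-up $B(0,y)$ differs from $B(x,y)$ for $x>0$.
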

Coming back to the semi-simple decomposition of the tensor product, we obtain the correspondence given in Theorem \ref{Thm1}.
$$ T_{n+1}(x,y) \longleftrightarrow \{ \sigma  \subseteq \R^2 \mid \sigma \text{ is a path of length n, in the first quadrant }  $$
$$ \ \ \ \ \ \ \ \ \ \ \ \ \ \ \ \  \text{ between the points } (0,0) \text{ to } (x,y) $$
$$ \text{ \ \ \ \  \ \ \ \ \ \ \ \ \ \ \ \ \ \ \ \ \ \ \ \ \ \ \ \ \ \ \ \ \ constructed with the moves } M_1, M_2, M_3 \text{ or} \  M_4 \} $$\label{first}

\section{Proof of the Conjecture}{\label{sec6}}

This section is the second part of the proof of the Wagner-Marin Conjecture (Theorem \ref{conj}) and concerns a purely combinatorial result, which deals with the identification between two counting problems. Following the previous section (\ref{sec5}), we conclude that the cardinality of the multiplicity spaces corresponding to $V(0,\alpha)^{\otimes n+1}$ is the same as the number of paths in the plane of fixed length $n$, which follow some fixed rules. On the other hand, it was known that the conjectured dimension can be interpreted as the number of pairs of planar paths with certain requirements. We will show that the two counting problems coincide, by establishing a bijection between the previous set of paths towards the set of pairs of paths with the required properties.

\begin{remark}
We will denote by $\Delta _{n} \in \R^2$ the standard $2$-dimensional simplex, which has the length of the edge $n$. We notice that in the diagram $D(n+1)$, the points with non-zero weights are inside (or on the boundary) of $\Delta_{n}$.
\end{remark}
\begin{notation}

Consider the following set of paths
$$P_{n+1}(x,y):= \{ \text{planar paths from} \ (0,0) \text{ to} \ (x,y) \ 
\text {of length} \  n  $$
$$  \ \ \ \ \ \ \ \ \ \ \ \ \ \ \ \ \ \text { constructed using the possible moves} $$
$$  \ \ \ \ \ \text {M1, M2, M3 or M4, }$$
$$  \ \ \ \ \ \ \ \ \ \ \ \ \ \ \ \text{ which are contained in the positive quadrant} \} $$
\end{notation}
\begin{remark}\label{set}
As we have seen in \ref{multiplicities}, for any number $n \in \N$ and $x,y \in \Delta _{n}$, we have the following bijection:
$$\psi^{n+1}_{(x,y)}:T_{n+1}(x,y) \rightarrow \ P_{n+1}(x,y)$$\label{ggg}
$$t_{n+1}(x,y)=| \ P_{n+1}(x,y) |$$
\end{remark}

\subsection{Intertwinner spaces}\label{int}
We will begin by establishing the relation between the centraliser algebra $LG_{n+1}(\alpha)$ and the intertwinner spaces corresponding to the tensor powers of simple components that occur in the semi-simple decomposition of $V(0,\alpha)^{\otimes (n+1)}$.
\begin{remark} As we have seen, there is the following tensor decomposition:
$$V(0,\alpha)^{\otimes n+1}=\bigoplus_{x,y \in \Delta_{n}}\Big( T_{n+1}(x,y) \otimes V\big(x,(n+1)\alpha+y \big) \ \Big)$$ 
where $T_{n+1}(x,y)$ is the multiplicity space corresponding to the weight $(x,(n+1) \alpha+y)$.
\end{remark}
\begin{proposition}\label{end}
From \cite{GP}, for typical $(n,\alpha) \in \Lambda$, $V(n,\alpha)$ is simple representation and moreover:
$$Hom_{U_q(sl(2|1))}(V(n,\alpha),V(m,\beta))\simeq\delta _{(n,\alpha)}^{(m,\beta)} \cdot \Bbbk Id.$$ 
\end{proposition}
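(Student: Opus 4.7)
The plan is to deduce this statement as a direct application of the (super) Schur lemma, once the simplicity of typical highest weight modules has been imported from \cite{GP}. Concretely, I would split the argument into two cases according to whether $(n,\alpha) = (m,\beta)$ or not, and in each case reduce the claim to a standard fact about morphisms between simple modules over a Hopf (super)algebra.

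First, I would record that by the result cited from \cite{GP}, whenever a weight $(n,\alpha) \in \Lambda$ is typical, the highest weight module $V(n,\alpha)$ is finite-dimensional and simple, and moreover it is determined up to isomorphism by its highest weight: if $V(n,\alpha) \simeq V(m,\beta)$ then $(n,\alpha) = (m,\beta)$. This is precisely where the typicality hypothesis is used, since at atypical weights one may have non-trivial extensions and equalities between the Jordan--H\"older constituents of different highest weight modules.

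For the case $(n,\alpha) \neq (m,\beta)$, I would argue as follows. Any $f \in \mathrm{Hom}_{U_q(sl(2|1))}(V(n,\alpha),V(m,\beta))$ has kernel a submodule of $V(n,\alpha)$ and image a submodule of $V(m,\beta)$. Since both modules are simple, $\ker(f) \in \{0, V(n,\alpha)\}$ and $\mathrm{im}(f) \in \{0, V(m,\beta)\}$. Hence either $f = 0$, or $f$ is an isomorphism; the latter is excluded because the highest weights differ, so $f = 0$ and the $\mathrm{Hom}$ space vanishes, in agreement with the factor $\delta_{(n,\alpha)}^{(m,\beta)}$.

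For the case $(n,\alpha) = (m,\beta)$, the same short exact sequence argument shows that any non-zero endomorphism $f$ of $V(n,\alpha)$ is an isomorphism. Then I would invoke Schur's lemma in its standard form over the field $\Bbbk = \mathbb{C}(\!(\hslash)\!)$: since $V(n,\alpha)$ is finite-dimensional over the algebraically closed (after completion) base and $f$ commutes with the $U_q(sl(2|1))$-action, $f$ must have an eigenvalue $\lambda \in \Bbbk$, and $f - \lambda \, \mathrm{Id}$ is then an endomorphism with non-zero kernel, hence zero by the preceding dichotomy. Thus $f = \lambda \, \mathrm{Id}$ and the endomorphism algebra is $\Bbbk \cdot \mathrm{Id}$, as claimed. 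The only genuinely non-trivial ingredient is the simplicity statement borrowed from \cite{GP}; everything else is formal. The main point to be careful about is that $\Bbbk$ is not literally algebraically closed, but the eigenvalue argument goes through because any non-zero $U_q$-equivariant endomorphism of a finite-dimensional module, when restricted to the highest weight line, acts as a scalar (the module being cyclic on this vector), and this scalar then determines $f$ on all of $V(n,\alpha)$ by equivariance.
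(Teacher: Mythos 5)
The paper does not supply a proof of this proposition; it is stated as a direct import from \cite{GP}. Your argument is the standard derivation that the paper is implicitly leaning on, and it is correct: the kernel/image dichotomy for simple modules handles the $(n,\alpha)\neq(m,\beta)$ case, and the highest-weight-line argument handles the endomorphism case. The highest-weight argument is exactly the right way to sidestep the fact that $\Bbbk=\C((\hslash))$ is not algebraically closed: $f$ preserves weight spaces, the $(n,\alpha)$-weight space is one-dimensional, so $f(v_0)=\lambda v_0$, and cyclicity then forces $f=\lambda\,\mathrm{Id}$. This is cleaner than the eigenvalue version of Schur and is what actually justifies the proposition over this ground field.

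Two small remarks. First, your parenthetical ``algebraically closed (after completion)'' is misleading: $\C((\hslash))$ is already complete but not algebraically closed, and its algebraic closure is the Puiseux-type field, not a completion; fortunately you immediately replace this with the correct highest-weight argument, so nothing is lost. Second, for the kernel/image dichotomy you use that $V(m,\beta)$ is also simple, which requires $(m,\beta)$ to be typical as well; the proposition as phrased only hypothesizes typicality of $(n,\alpha)$. In the paper's application this is harmless because $\alpha\notin\Q$ forces every weight occurring in $V(0,\alpha)^{\otimes n}$ to be typical, but if you want the proposition to stand alone you should either assume both weights typical or replace the image argument by noting that a nonzero map from a simple module is injective, hence exhibits $V(n,\alpha)$ as the cyclic submodule generated by the image of $v_0$, which must sit in the $(n,\alpha)$-weight space of $V(m,\beta)$; that weight space is nonzero only when $(n,\alpha)\leq(m,\beta)$, and equality of highest weight modules forces $(n,\alpha)=(m,\beta)$.
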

\begin{definition}
For a representation $V \in Rep(U_q(sl(2|1)))$ and a natural number  $n$, the $n^{th}$ intertwinner space corresponding to $V$ is the following endomorphism ring:
$$\mathscr{I}_{n}(V):=End_{U_q(sl(2|1))}\left( \bigoplus_{i=1}^{n}  V_i \ \right), \text{  where  }  V_i \simeq V \ \  \forall i \in \{1,...,n\} $$
\end{definition}
In our case, using that $\alpha \notin \Q$, we deduce that for any $(x,y) \in \Delta _{n+1}$, all modules $V(x,(n+1)\alpha+y)$ are typical. 
\begin{remark}From \ref{end}, we see that the intertwinner spaces for simple representations are actually rings of matrices:  
$$\mathscr{I}_{n+1}(V(0,\alpha))\simeq M({n+1},\Bbbk)$$ 
\end{remark}
The previous remarks lead to the following characterisation the centraliser algebra $LG_{n+1}(\alpha)$, in terms of the intertwinner spaces corresponding to the multiplicity spaces of the simple components from $V(0,\alpha)^{n+1}$:
\begin{proposition}\label{inter}
$$LG_{n+1}(\alpha)\simeq \bigoplus_{x,y \in \Delta_{n}} \mathscr{I}_{t_{n+1}(x,y)}\Big(V\big(x,(n+1)\alpha+y\big)\Big)\simeq \bigoplus_{x,y \in \Delta_{n}} M(t_{n+1}(x,y),\Bbbk).$$
\end{proposition}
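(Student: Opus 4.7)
The plan is to compute the endomorphism algebra of the tensor power via its semisimple decomposition, using Schur's lemma for simple modules over $U_q(sl(2|1))$, and then identify the resulting blocks with the claimed intertwiner spaces.

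First I would invoke the decomposition from Notation \ref{mult} (together with Theorem \ref{Thm} and the fact that $\alpha \notin \Q$ guarantees all weights are typical, so every summand $V(x,(n+1)\alpha+y)$ is simple). Writing
\[
V(0,\alpha)^{\otimes n+1} \simeq \bigoplus_{(x,y)\in \Delta_n} \Big(T_{n+1}(x,y) \otimes V\big(x,(n+1)\alpha+y\big)\Big),
\]
the endomorphism algebra splits as
\[
LG_{n+1}(\alpha) \simeq \bigoplus_{(x,y),(x',y')\in \Delta_n} \mathrm{Hom}_{U_q}\Big(T_{n+1}(x,y)\otimes V(x,(n+1)\alpha+y),\ T_{n+1}(x',y')\otimes V(x',(n+1)\alpha+y')\Big).
\]
Multiplicity spaces $T_{n+1}(x,y)$ carry trivial $U_q$-action, so each cross term is $\mathrm{Hom}_\Bbbk(T_{n+1}(x,y),T_{n+1}(x',y')) \otimes \mathrm{Hom}_{U_q}\big(V(x,(n+1)\alpha+y), V(x',(n+1)\alpha+y')\big)$. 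By Proposition \ref{end} this last factor is zero unless $(x,y)=(x',y')$, in which case it is $\Bbbk\cdot\mathrm{Id}$.

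Next I would identify the surviving diagonal blocks. For each $(x,y)\in \Delta_n$, the block collapses to
\[
\mathrm{End}_\Bbbk\big(T_{n+1}(x,y)\big) \otimes \Bbbk\cdot\mathrm{Id}_{V(x,(n+1)\alpha+y)} \simeq M\big(t_{n+1}(x,y),\Bbbk\big),
\]
where $t_{n+1}(x,y) = \dim T_{n+1}(x,y)$. On the other hand, the intertwiner space $\mathscr{I}_{t_{n+1}(x,y)}\big(V(x,(n+1)\alpha+y)\big)$ is by definition $\mathrm{End}_{U_q}$ of a direct sum of $t_{n+1}(x,y)$ copies of this simple module, which by Proposition \ref{end} is again $M(t_{n+1}(x,y),\Bbbk)$. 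This gives the middle isomorphism in the statement.

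Assembling the two observations produces both isomorphisms simultaneously. There is no genuine obstacle here, as everything follows formally from semisimplicity and Schur's lemma; the one subtle point, worth spelling out, is to justify that the decomposition of $V(0,\alpha)^{\otimes n+1}$ consists entirely of simple (typical) modules, which is the role of the hypothesis $\alpha\in\C\setminus\Q$ combined with an induction using Theorem \ref{Thm}.
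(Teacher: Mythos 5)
Your proof is correct and follows the same route as the paper: expand $\mathrm{End}_{U_q}$ over the semi-simple decomposition of $V(0,\alpha)^{\otimes n+1}$, use Proposition \ref{end} (Schur's lemma for typical simple modules) to kill the off-diagonal $\mathrm{Hom}$ blocks, and identify each surviving diagonal block with $M(t_{n+1}(x,y),\Bbbk) \simeq \mathscr{I}_{t_{n+1}(x,y)}(V(x,(n+1)\alpha+y))$. The only difference is that you spell out the $\mathrm{Hom}$-decomposition and the typicality argument slightly more explicitly than the paper, which states these compressed into a chain of isomorphisms.
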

\begin{proof}
$$LG_{n+1}(\alpha)\simeq End_{U_q(sl(2|1))}\left( V(0,\alpha)^{\otimes (n+1)}\right) \simeq$$
$$ \simeq \ End_{U_q(sl(2|1))} \left( \bigoplus_{x,y \in \Delta_{n}}  \left( T_{n+1}(x,y) \otimes V(x,(n+1)\alpha+y)) \right) \right) \simeq $$
$$ \simeq^{\text{Prop}\ref{end}} \bigoplus_{x,y \in \Delta_{n}} \ End_{U_q(sl(2|1))} \Big( T_{n+1}(x,y) \otimes V(x,(n+1)\alpha+y)) \Big) \simeq $$

$$ \simeq \bigoplus_{x,y \in \Delta_{n}} \mathscr{I}_{t_{n+1}}(x,y)(V(0,\alpha))\Big(V\big(x,(n+1)\alpha+y\big)\Big)$$
\end{proof}
\begin{corollary}
This decomposition in terms of matrix rings, leads to the following formula for the dimension of the centraliser algebra:
$$dim \ LG_{n+1}(\alpha)=\sum_{x,y\in \Delta_{n}} t_{n+1}(x,y)^2.$$
\end{corollary}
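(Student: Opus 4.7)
The plan is to derive the formula as a direct numerical consequence of the algebra isomorphism established in Proposition \ref{inter}. That proposition already does all the representation-theoretic work: it shows that the centraliser algebra splits as a direct sum of matrix algebras indexed by the lattice points $(x,y) \in \Delta_n$, with the size of each block equal to the multiplicity $t_{n+1}(x,y)$ of the simple module $V(x,(n+1)\alpha+y)$ inside $V(0,\alpha)^{\otimes(n+1)}$. Thus the only remaining step is to extract the dimension.

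First, I would invoke Proposition \ref{inter} to obtain the identification
\[
LG_{n+1}(\alpha)\;\simeq\;\bigoplus_{(x,y)\in \Delta_{n}} M\bigl(t_{n+1}(x,y),\Bbbk\bigr).
\]
Then I would take $\Bbbk$-dimensions on both sides. Since dimension is additive over finite direct sums of vector spaces, and since the matrix algebra $M(k,\Bbbk)$ has dimension $k^{2}$, this yields
\[
\dim LG_{n+1}(\alpha) \;=\; \sum_{(x,y)\in \Delta_{n}} \dim M\bigl(t_{n+1}(x,y),\Bbbk\bigr) \;=\; \sum_{(x,y)\in \Delta_{n}} t_{n+1}(x,y)^{2},
\]
which is the desired formula.

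There is essentially no obstacle here: all the substantive content (typicality of the simple summands, the resulting Schur-type statement that intertwiner spaces of simple typical modules are one-dimensional, and the resulting block-matrix decomposition) was already packaged into Proposition \ref{inter}. The one point worth double-checking is that the index set $\Delta_n$ is finite, so that the direct sum is finite and dimension-taking commutes with it; this is immediate because non-zero multiplicities $t_{n+1}(x,y)$ only occur for $(x,y)$ in the bounded region $\Delta_n$, as noted in the remark preceding the notation $P_{n+1}(x,y)$. Once that finiteness is acknowledged, the corollary is a one-line consequence.
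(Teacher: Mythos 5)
Your proposal is correct and follows exactly the route the paper intends: the corollary is a direct consequence of Proposition \ref{inter}, obtained by taking $\Bbbk$-dimensions of the finite direct sum of matrix algebras and using $\dim M(k,\Bbbk)=k^2$. The paper leaves this computation implicit (stating the corollary without a written proof), so your explicit one-line derivation, including the observation that the index set $\Delta_n$ is finite, is precisely the intended argument.
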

\begin{corollary}\label{1}
Using the identification from Proposition \ref{set},  we conclude that:
$$dim \ LG_{n+1}(\alpha)=\sum_{x,y\in \Delta_{n}}|P_{n+1}(x,y)|^2.$$
\end{corollary}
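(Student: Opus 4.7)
The statement to be proved is the identity
\[
\dim LG_{n+1}(\alpha) \;=\; \sum_{x,y\in\Delta_n} |P_{n+1}(x,y)|^2,
\]
and this is essentially a bookkeeping consequence of two results that have already been established by this point in the paper, so the plan is a short substitution argument rather than a new piece of mathematics.

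The plan is to combine the algebraic dimension formula (the immediately preceding Corollary) with the combinatorial interpretation of the multiplicities (Theorem \ref{multiplicities} / Remark \ref{set}). First, I would recall that Proposition \ref{inter} gave the matrix-algebra decomposition
\[
LG_{n+1}(\alpha) \;\simeq\; \bigoplus_{x,y\in\Delta_n} M\bigl(t_{n+1}(x,y),\Bbbk\bigr),
\]
from which the previous Corollary reads off $\dim LG_{n+1}(\alpha) = \sum_{x,y\in\Delta_n} t_{n+1}(x,y)^2$. This step uses no combinatorics; it is purely the general fact that an endomorphism algebra of a semisimple module with isotypic multiplicities $t_{n+1}(x,y)$ is a direct sum of matrix algebras of the corresponding sizes (which in turn relies on Proposition \ref{end}, the typicality of all simple summands appearing for $\alpha\notin\Q$).

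Second, I would invoke the bijection
\[
\psi^{n+1}_{(x,y)}\colon T_{n+1}(x,y)\;\longrightarrow\; P_{n+1}(x,y)
\]
of Remark \ref{set}, itself an immediate reformulation of Theorem \ref{multiplicities} (which was proved inductively via the blow-up moves $B(x,y)$ and the local moves $M_1,\dots,M_4$ on the diagram $D(n)$). This gives the identity of natural numbers $t_{n+1}(x,y) = |P_{n+1}(x,y)|$ for every lattice point $(x,y)\in \Delta_n$. Substituting this equality term by term into the dimension sum yields
\[
\dim LG_{n+1}(\alpha) \;=\; \sum_{x,y\in\Delta_n} t_{n+1}(x,y)^2 \;=\; \sum_{x,y\in\Delta_n} |P_{n+1}(x,y)|^2,
\]
which is the desired formula.

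Since both ingredients are already in place, there is no genuine obstacle in this step itself; the real work (and what I would flag as the conceptually hard part that this corollary is capitalising on) was the inductive construction of the diagrams $D(n)$ and the verification that the four local moves $M_1,M_2,M_3,M_4$ faithfully encode the branching rule of Theorem \ref{Thm}, together with the constraint $x'>0$ for the move $M_4$ coming from the atypical/typical split of the tensor decomposition. Once those are granted, the corollary is a one-line substitution and reframes the computation of $\dim LG_{n+1}(\alpha)$ as a purely combinatorial counting problem, setting the stage for the bijection with pairs of planar paths that is carried out in the next section.
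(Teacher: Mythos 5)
Your proposal is correct and follows exactly the paper's own route: it reads off $\dim LG_{n+1}(\alpha)=\sum_{x,y\in\Delta_n}t_{n+1}(x,y)^2$ from the matrix-algebra decomposition in Proposition \ref{inter} (via the immediately preceding Corollary) and then substitutes the bijection $t_{n+1}(x,y)=|P_{n+1}(x,y)|$ from Remark \ref{set}. Nothing more is needed, and your framing of the corollary as a one-line substitution capitalising on the earlier inductive work is accurate.
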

\subsection{Conjectured dimension described using pairs of planar paths}

In \cite{MW}, it is mentioned that F. Chapoton remarked that the conjectured dimension of $LG_{n+1}$ coincides with a combinatorial formula for a way of counting pairs of paths in the plane.
\begin{notation} Consider the following set of pairs of planar paths:
 $$C_{n+1}=\{ (\sigma, \tau) \subseteq \R^2 \mid \sigma, \tau \text{are disjoint paths in the } (n+1)\times (n+1) \text { square} $$ 
$$\text { \ \ \ \ \ \ \ \ \ \ \ \ \ \ \ \ \ constructed using two possible moves- upwards} \uparrow  \text{or right} \rightarrow $$
$$ \ \ \ \ \ \ \ \ \ \ \ \ \ \ \ \ \ \text {between the points } (0,1)\rightarrow (n,n+1) \text { and } (1,0)\rightarrow (n+1,n) \} $$
\end{notation}

\begin{theorem}(\cite{A}, \cite{S}) \label{2}(Counting pairs of planar paths)
The following combinatorial identity holds:
$$ | C_{n+1} |=\frac{(2n)!(2n+1)!}{(n!(n+1)!)^2}$$
\end{theorem}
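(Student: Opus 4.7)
The plan is to invoke the Lindström--Gessel--Viennot (LGV) lemma on the two-source, two-sink configuration with sources $A_1 = (0,1)$, $A_2 = (1,0)$ and sinks $B_1 = (n,n+1)$, $B_2 = (n+1,n)$, where lattice paths use only the moves $\uparrow$ and $\rightarrow$. This reduces the counting of disjoint pairs to a $2 \times 2$ determinant of binomials, which can then be simplified by Pascal's identity.

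First, I would show that only the identity permutation of sinks contributes to the LGV alternating sum. Any monotone path from $A_1 = (0,1)$ to $B_2 = (n+1,n)$ must meet any monotone path from $A_2 = (1,0)$ to $B_1 = (n,n+1)$: the $y$-coordinate of the first starts above that of the second ($1 > 0$) but finishes below it ($n < n+1$), and since both $y$-coordinates evolve monotonically along their respective paths, they must coincide at some integer step, forcing a common vertex. Applying LGV therefore gives
$$|C_{n+1}| \;=\; \det\!\begin{pmatrix} e(A_1,B_1) & e(A_1,B_2) \\ e(A_2,B_1) & e(A_2,B_2) \end{pmatrix},$$
where $e(A,B)$ counts all $\uparrow$-$\rightarrow$-paths from $A$ to $B$.

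Next, a direct enumeration gives $e(A_1,B_1) = e(A_2,B_2) = \binom{2n}{n}$ (each such path needs exactly $n$ rights and $n$ ups) and $e(A_1,B_2) = e(A_2,B_1) = \binom{2n}{n-1}$ (each needs $n+1$ of one kind of step and $n-1$ of the other). Hence
$$|C_{n+1}| \;=\; \binom{2n}{n}^{\!2} - \binom{2n}{n-1}^{\!2} \;=\; \left[\binom{2n}{n} - \binom{2n}{n-1}\right]\!\left[\binom{2n}{n} + \binom{2n}{n-1}\right].$$
By Pascal, the second bracket equals $\binom{2n+1}{n} = \frac{(2n+1)!}{n!(n+1)!}$, while a one-line manipulation identifies the first bracket as $\frac{(2n)!}{n!(n+1)!}$ (the Catalan number $C_n$). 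The product gives exactly $\frac{(2n)!(2n+1)!}{(n!(n+1)!)^2}$, which is the claimed formula.

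No step in this plan is genuinely difficult; the only place requiring care is the forced-crossing argument for the non-identity permutation, which is the standard planar topological input behind LGV. A bijective proof — matching $C_{n+1}$ with, say, pairs of standard Young tableaux of a rectangular shape via RSK, or with some other Catalan-like object — would be more intricate, but the determinantal route above is the cleanest and yields the closed form essentially immediately.
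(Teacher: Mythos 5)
Your proposal is correct, and the LGV-determinant route is the canonical way to obtain this count; the paper itself does not supply a proof but simply cites Aigner's \emph{A Course in Enumeration} and Stanley's \emph{Enumerative Combinatorics, Vol.~2}, which treat exactly this kind of non-intersecting lattice-path enumeration by the Lindstr\"om--Gessel--Viennot lemma, so your argument is the expected one. One small stylistic remark: the forced-intersection step for the non-identity permutation is most cleanly phrased by observing that both a path from $(0,1)$ to $(n+1,n)$ and a path from $(1,0)$ to $(n,n+1)$ meet every antidiagonal $x+y=k$ for $1\le k\le 2n+1$ in exactly one lattice point, and the difference of $y$-coordinates along these antidiagonals changes by at most $1$ per step, starting positive and ending negative, so it must vanish somewhere — this makes the ``intermediate value'' reasoning airtight; your phrasing gestures at the right idea but is slightly loose about the common parametrization. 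The rest — the four binomial entries $\binom{2n}{n}$, $\binom{2n}{n}$, $\binom{2n}{n-1}$, $\binom{2n}{n-1}$, the difference of squares, Pascal's rule giving $\binom{2n+1}{n}$, and the Catalan identification $\binom{2n}{n}-\binom{2n}{n-1}=\frac{(2n)!}{n!(n+1)!}$ — is all correct and yields the stated formula.
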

\subsection{Relation between pairs of paths in the square and pairs of paths in the simplex $\Delta_{n+1}$}
Putting together the results from the last sections, we notice that there are combinatorial models for both sides which appear in relation \ref{conj}. In this part, we will concentrate on the set $C_{n+1}$, and describe it as an union of certain subsets of pairs of paths.
\begin{remark}
The description from \ref{first} for the multiplicity spaces and the relation \ref{1}, provide a combinatorial description for the centraliser algebra $LG_{n+1}(\alpha)$. On the other hand, we will use the result that was known from Theorem \ref{2}. From these results, the question that arises now is to establish a relation between the sets 
$$ \bigcup_{x,y \in \Delta_n} \left( P_{n+1}(x,y) \times P_{n+1}(x,y) \right) \ \ \ \ \longleftrightarrow^{?} \ \ \ \ C_{n+1}. $$\label{?}
\end{remark}

For the moment, in the previous formula \ref{?}, there is an asymmetry coming from the fact that in the left hand side one has a union of sets indexed by points with integer coordinates from the simplex $\Delta_n$, whereas in the right hand side there is a global set, which is not yet partitioned in smaller subsets. This rises a natural question, to understand the set of pairs of paths $C_{n+1}$, as an union of subsets, indexed by the same indexing set corresponding to $\Delta_n$. In order to do this, the main idea in the next part is to start with a pair of paths in the square, and remember where those paths "cut the principal diagonal". We will use this data as an indexing set.
\begin{center}
\begin{equation}\label{eq:bla4}
\begin{split}
\begin{tikzpicture}
[x=1mm,y=1mm,scale=1,font=\Large]
\node at (12,-4) [anchor=north east] {$\Large{\color{red}n+1-a}$};
\node at (-3,33) [anchor=north east] {$\Large{\color{red}a}$};
\node at (38,-3) [anchor=north east] {$\Large{\color{green}n+1-b}$};
\node at (-5,13) [anchor=north east] {$\Large{\color{green}b}$};
\draw[step=10mm,black!20,very thin] (0,0) grid (40,40);
\draw[black!20,very thin] (0,0) -- +(-5,5);
\draw[black!20,very thin] (10,0) -- +(-15,15);
\draw[black!20,very thin] (20,0) -- +(-25,25);
\draw[black!20,very thin] (30,0) -- +(-35,35);
\draw[very thick,black!50!blue] (40,0) -- +(-40,40);
\draw[black!20,very thin] (40,10) -- +(-35,35);
\draw[black!20,very thin] (40,20) -- +(-25,25);
\draw[black!20,very thin] (40,30) -- +(-15,15);
\draw[black!20,very thin] (40,40) -- +(-5,5);
\draw[->,black!20] (0,-5) -- (0,45);
\draw[->,black!20] (-5,0) -- (45,0);
\foreach \x/\y in {0/10, 0/30, 20/30, 20/40, 30/40, 10/0, 10/10, 30/10, 30/30, 40/30} {\node at (\x,\y) [circle,fill,inner sep=1pt] {};}
\foreach \x/\y in {10/30, 30/10} {\node at (\x,\y) [circle,fill,inner sep=3pt]{};}
\draw[very thick,black!50!green] (10,0) -- (10,10) -- (30,10) -- (30,30) -- (40,30);
\draw[very thick,black!50!green,->] (10,0) -- (10,5);
\draw[very thick,black!50!green,->] (20,10) -- (25,10);
\draw[very thick,black!50!green,->] (30,20) -- (30,25);
\draw[very thick,black!50!red] (0,10) -- (0,30) -- (20,30) -- (20,40) -- (30,40);
\draw[very thick,black!50!red,->] (0,10) -- (0,15);
\draw[very thick,black!50!red,->] (0,30) -- (5,30);
\draw[very thick,black!50!red,->] (20,30) -- (20,35);
\end{tikzpicture}
\end{split}
\end{equation}
$$ \left( \ \  C_{n+1}(a,b); \ \ \ \ \color{red} (n+1-a,a)\color{black}{;} \ \ \ \ \color{green}(n+1-b,b) \ \  \color{black}\right)$$ 
\end{center}
\begin{definition}
Following this procedure, we will consider the set of paths that cut the diagonal in two given points.
Let $(a,b)\in \N\times \N$ with $a,b\leq n+1$ and $a>b$. Let us denote by:
$$C_{n+1}(a,b):= \{ \text {pairs of paths in } C_{n+1} \text{ that cut the principal diagonal of the} $$
$$ \ \ \ \ \ \ \ \ \ \ \ \ \ \ \ \ \ \ \ \text{square precisely in the points} \  (n+1-a,a) \ \text {and} \ (n+1-b,b) \} .$$
\end{definition}
\begin{notation}
Let us denote by 
$$\bar{\Delta}_{n}:=\{ (a',b') \in \Z \times \Z \mid a',b' \leq n, a \geq b  \}$$
$$\bar{\Delta}^{sym}_{n}:=\{ (a',b') \in \Z \times \Z \mid a',b' \leq n+1, a > b  \}$$
\end{notation}

\begin{center}
\begin{equation}
\begin{split}
\begin{tikzpicture}
[x=1mm,y=1mm,scale=2/3,font=\Large]
\node at (-5,0) [anchor=north east] {$\color{black} (0,0)$};
\node at (67,0) [anchor=north east] {$\color{black} (n+1,0)$};
\node at (0,40) [anchor=north east] {$\color{black} (0,n+1)$};
\node at (15,0) [anchor=north east] {$\color{black} (1,0)$};
\node at (35,0) [anchor=north east] {$\color{black} (n,0)$};
\node at (67,35) [anchor=north east] {$\color{black} (n+1,n)$};
\node at (17,10) [anchor=north east] {$\Huge{\color{red} \bar{\Delta}_{n}}$};
\node at (41,10) [anchor=north east] {$\Huge{\color{green} \bar{\Delta}^{sym}_{n}}$};
\draw[step=10mm,black!20,very thin] (0,0) grid (40,40);
\draw[very thick,black!50!blue] (-0.7,0) -- +(40,40);
\draw[very thick,black!50!green] (10,0) -- +(30,30);
\draw[very thick,black!50!green] (10,0) -- +(30,0);
\draw[very thick,black!50!green] (40,0) -- +(0,30);
\draw[very thick,black!50!red] (0,0) -- +(30,30);
\draw[very thick,black!50!red] (0,0) -- +(30,0);
\draw[very thick,black!50!red] (30,0) -- +(0,30);
\end{tikzpicture}
\end{split}
\end{equation}
\end{center}
\begin{remark}\label{disjunion}
Using these notations, $C_{n+1}$ can be expressed in the following way:
$$ \  C_{n+1}=\bigcup_{(a,b)\in \bar{\Delta}^{sym}_n} C_{n+1}(a,b)$$
\end{remark}
A second asymmetry that one can notice in the question \ref{?}, is related to the fact that in the left hand side one has products of sets of planar paths, but in the right hand side there is a set whose elements are certain pairs of paths. The second idea of this part, will be to cut each pair of paths from from $C_{n+1}(a,b)$ using the secondary diagonal of the $(n+1)\times (n+1)$ square. Pursuing this procedure, one gets two pairs of paths in the simplex $\Delta_{n+1}$. Now we will make this precise.

\begin{definition}
For $(a,b)\in \bar{\Delta}^{sym}_n$, consider the following set:
$${C_{n+1}^{\Delta}}(a,b):=\{ \text{pairs of disjoint paths in the simplex } \Delta _{n+1}, \text{ constructed with moves } M_2 \text { or } M_3,$$
$$ \ \ \ \ \ \ \ \ \ \ \ \ \ \ \ \ \ \ \ \text{ between the points } (1,0) \rightarrow (n+1-a,a) \text{ and } (0,1) \rightarrow (n+1-b,b) \text{ respectively} \} .$$
\end{definition}
\begin{center}
\begin{equation}\label{eq:bla4}
\begin{split}
\begin{tikzpicture}
[x=1mm,y=1mm,scale=4/5,font=\Large]
\draw[step=10mm,black!20,very thin] (0,0) grid (40,40);
\draw[black!20,very thin] (0,0) -- +(-5,5);
\draw[black!20,very thin] (10,0) -- +(-15,15);
\draw[black!20,very thin] (20,0) -- +(-25,25);
\draw[black!20,very thin] (30,0) -- +(-35,35);
\draw[very thick,black!50!blue] (40,0) -- +(-40,40);
\draw[black!20,very thin] (40,10) -- +(-35,35);
\draw[black!20,very thin] (40,20) -- +(-25,25);
\draw[black!20,very thin] (40,30) -- +(-15,15);
\draw[black!20,very thin] (40,40) -- +(-5,5);
\draw[->,black!20] (0,-5) -- (0,45);
\draw[->,black!20] (-5,0) -- (45,0);
\node at (12,-4) [anchor=north east] {$\Large{\color{red}n+1-a}$};
\node at (-3,33) [anchor=north east] {$\Large{\color{red}a}$};
\node at (38,-3) [anchor=north east] {$\Large{\color{green}n+1-b}$};
\node at (-5,13) [anchor=north east] {$\Large{\color{green}b}$};
\foreach \x/\y in {0/10, 0/30, 10/0, 10/10, 30/10, 10/30} {\node at (\x,\y) [circle,fill,inner sep=1pt] {};}
\foreach \x/\y in {10/30, 30/10} {\node at (\x,\y) [circle,fill,inner sep=2pt]{};}
\draw[very thick,black!50!green] (10,0) -- (10,10) -- (30,10); 
\draw[very thick,black!50!green,->] (10,0) -- (10,5);
\draw[very thick,black!50!green,->] (20,10) -- (25,10);
\draw[very thick,black!50!red] (0,10) -- (0,30) -- (10,30);
\draw[very thick,black!50!red,->] (0,10) -- (0,15);
\draw[very thick,black!50!red,->] (0,30) -- (5,30);
\node at (92,-4) [anchor=north east] {$\Large{\color{red}n+1-a}$};
\node at (77,33) [anchor=north east] {$\Large{\color{red}a}$};
\node at (118,-3) [anchor=north east] {$\Large{\color{green}n+1-b}$};
\node at (75,13) [anchor=north east] {$\Large{\color{green}b}$};
\draw[step=10mm,black!20,very thin] (80,0) grid (120,40);
\draw[black!20,very thin] (80,0) -- +(-5,5);
\draw[black!20,very thin] (90,0) -- +(-15,15);
\draw[black!20,very thin] (100,0) -- +(-25,25);
\draw[black!20,very thin] (110,0) -- +(-35,35);
\draw[very thick,black!50!blue] (120,0) -- +(-40,40);
\draw[black!20,very thin] (120,10) -- +(-35,35);
\draw[black!20,very thin] (120,20) -- +(-25,25);
\draw[black!20,very thin] (120,30) -- +(-15,15);
\draw[black!20,very thin] (120,40) -- +(-5,5);
\draw[->,black!20] (80,-5) -- (80,45);
\draw[->,black!20] (75,0) -- (125,0);
\foreach \x/\y in {90/30, 100/30, 100/40, 110/40, 110/10, 110/30, 120/30} {\node at (\x,\y) [circle,fill,inner sep=1pt] {};}
\foreach \x/\y in {90/30, 110/10} {\node at (\x,\y) [circle,fill,inner sep=2pt]{};}
\draw[very thick,black!50!green] (110,10) -- (110,30) -- (120,30);
\draw[very thick,black!50!green,->] (110,10) -- (110,15);
\draw[very thick,black!50!green,->] (110,30) -- (115,30);
\draw[very thick,black!50!red] (90,30) -- (100,30) -- (100,40) -- (110,40);
\draw[very thick,black!50!red,->] (90,30) -- (95,30);
\draw[very thick,black!50!red,->] (100,30) -- (100,35);
\draw[very thick,black!50!red,->] (100,40) -- (105,40);
\end{tikzpicture}
\end{split}
\end{equation}
$$ \left( \ \  C_{n+1}^{\Delta}(a,b); \ \ \color{red} (n+1-a,a)\color{black}{;} \ \ \color{green}(n+1-b,b) \ \  \color{black}\right) \ \ \ \ \ \ \  \  \left( \ \  C_{n+1}^{\Delta}(a,b); \ \ \color{red} (n+1-a,a)\color{black}{;} \ \ \color{green}(n+1-b,b) \ \  \color{black}\right)$$  
\end{center}
\begin{lemma} 
There is the following one to one correspondence of sets:
$$ \ \ \ \ \ \ \ \ \ \ \ \ \ C_{n+1}(a,b)\simeq {C_{n+1}^{\Delta}} (a,b)\times {C_{n+1}^{\Delta}}(a,b), \ \ \ \forall \ (a,b)\in \bar{\Delta}^{sym}_n$$
\end{lemma}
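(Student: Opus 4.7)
The plan is to construct an explicit geometric bijection by cutting each pair of paths along the anti-diagonal $x+y = n+1$ (the ``principal diagonal'' of the figure) and folding the upper-right halves into the lower-left simplex via a reflection.

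The key observation is that every right/up path in the $(n+1)\times(n+1)$ square increases $x+y$ by exactly one at each step, so such a path meets the line $x+y = n+1$ at a unique lattice point. Hence for $(\sigma,\tau) \in C_{n+1}(a,b)$ the path $\sigma$ passes through $(n+1-a,a)$ precisely once and $\tau$ through $(n+1-b,b)$ precisely once, and so each admits a unique splitting at these points. I introduce the involution $r \colon \R^2 \to \R^2$ defined by $r(x,y) = (n+1-y, n+1-x)$, which fixes the anti-diagonal pointwise, interchanges the two half-squares, and sends a right step to a down step (respectively an up step to a left step). Composing $r$ with path-reversal therefore preserves the class of $M_2$/$M_3$ paths while sending the upper-right half of the square into the simplex $\Delta_{n+1}$.

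The forward map sends $(\sigma,\tau)$ to $\bigl((\sigma_{\mathrm{low}}, \tau_{\mathrm{low}}),\; (r(\sigma_{\mathrm{up}})^{\mathrm{rev}}, r(\tau_{\mathrm{up}})^{\mathrm{rev}})\bigr)$, where the subscript ``$\mathrm{low}$'' (respectively ``$\mathrm{up}$'') denotes the initial (respectively terminal) segment obtained by cutting at the anti-diagonal crossing point. The first component clearly lies in $C_{n+1}^{\Delta}(a,b)$. For the second component, since $r(n,n+1) = (0,1)$ and $r(n+1,n) = (1,0)$ while the points $(n+1-a,a)$ and $(n+1-b,b)$ are fixed by $r$, the reflected-and-reversed upper halves are two right/up paths in $\Delta_{n+1}$ with exactly the endpoints required by the definition of $C_{n+1}^{\Delta}(a,b)$. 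The inverse map is the obvious concatenation: given two pairs in $C_{n+1}^{\Delta}(a,b)$, apply $r$ and path-reversal to one pair to send it into the upper-right triangle, and glue to the other pair at the anti-diagonal crossing points.

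The one substantive check, which I expect to be the main technical obstacle, is \emph{disjointness} in both directions. In the forward direction, since $\sigma$ and $\tau$ are disjoint in the square and cross the anti-diagonal at distinct points (recall $a \neq b$), their restrictions to each half-square form a pair of disjoint paths, and disjointness is preserved by the bijection $r$. In the inverse direction, the concatenated lower and upper pieces glue continuously at the two distinct anti-diagonal points $(n+1-a,a)$ and $(n+1-b,b)$, and the resulting two paths in the square are disjoint because each half is disjoint and the two crossings of the anti-diagonal differ. This yields the asserted bijection $C_{n+1}(a,b) \simeq C_{n+1}^{\Delta}(a,b) \times C_{n+1}^{\Delta}(a,b)$.
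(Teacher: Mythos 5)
Your proof is correct and uses the same underlying idea as the paper — cut the pair of paths at the anti-diagonal $x+y=n+1$ — but the paper states this in a single sentence and leaves implicit how the upper-right half is identified with $C_{n+1}^{\Delta}(a,b)$. You have made that identification explicit via the reflection $r(x,y)=(n+1-y,n+1-x)$ followed by path-reversal, and you have checked the disjointness condition in both directions (using $a>b$ so the two crossing points are distinct), which fills in the details the paper omits.
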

This bijection can be established as we have seen, above.  If we start with a path from $C_{n+1}(a,b)$ and cut it along the secondary diagonal of the square, we get two paths in ${C_{n+1}^\Delta} (a,b)$. 

\begin{proposition}\label{splitpairs}
From the previous remarks and definitions we conclude that:
$$C_{n+1}=\bigcup_{(a,b)\in \bar{\Delta}^{sym}_n} \left( {C_{n+1}^\Delta }(a,b)\times{C_{n+1}^\Delta }(a,b) \ \right)$$ 
$$\mid C_{n+1}\mid=\sum_{(a,b)\in \bar{\Delta}^{sym}_n} \mid {C_{n+1}^\Delta }(a,b)\mid ^2$$
\end{proposition}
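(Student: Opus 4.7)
The statement is essentially a packaging of the two results immediately preceding it (Remark \ref{disjunion} and the Lemma), so the plan is to combine them carefully and verify that the combination is a set-theoretic equality, not just an inequality, on both sides.

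First, I would fix the global decomposition $C_{n+1}=\bigcup_{(a,b)\in \bar{\Delta}^{sym}_n} C_{n+1}(a,b)$ from Remark \ref{disjunion}, and observe that the union is \emph{disjoint}: each pair of disjoint monotone paths in the $(n+1)\times(n+1)$ square must meet the secondary diagonal $x+y=n+1$ in a uniquely determined set of two lattice points $(n+1-a,a)$ and $(n+1-b,b)$, so the indices $(a,b)$ are intrinsic data of the pair. A small verification is required to check that a pair from $C_{n+1}$ always meets the antidiagonal in \emph{exactly} two lattice points (one per path, forced by the fact that each path starts below and ends above the antidiagonal, uses only $\uparrow$ and $\rightarrow$ moves, and the two paths are disjoint).

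Next, for each fixed $(a,b)\in \bar{\Delta}^{sym}_n$, I would justify the bijection
\[
\Phi_{a,b}\colon C_{n+1}(a,b)\;\longrightarrow\;C_{n+1}^{\Delta}(a,b)\times C_{n+1}^{\Delta}(a,b)
\]
by an explicit cut-and-glue construction, as already sketched in the Lemma: cut each pair of paths along the line $x+y=n+1$. The lower-left half lives in the simplex $\Delta_{n+1}$ and consists of two disjoint paths $(1,0)\to(n+1-a,a)$ and $(0,1)\to(n+1-b,b)$ built from $M_2,M_3$ moves, i.e.\ an element of $C_{n+1}^{\Delta}(a,b)$. The upper-right half, after the central symmetry $(x,y)\mapsto (n+1-y,n+1-x)$ of the square (which exchanges the two endpoints of each path but preserves disjointness and the monotone move set), becomes a second element of $C_{n+1}^{\Delta}(a,b)$. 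The inverse map glues two elements of $C_{n+1}^{\Delta}(a,b)$ back along the antidiagonal after undoing this symmetry; one checks that the gluing reproduces disjoint paths with the correct endpoints in the square, and that the two operations are mutually inverse.

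Combining the two steps gives the claimed set-theoretic decomposition
\[
C_{n+1} \;=\; \bigsqcup_{(a,b)\in \bar{\Delta}^{sym}_n}\Big(C_{n+1}^{\Delta}(a,b)\times C_{n+1}^{\Delta}(a,b)\Big),
\]
and counting cardinalities immediately yields $|C_{n+1}|=\sum_{(a,b)\in \bar{\Delta}^{sym}_n}|C_{n+1}^{\Delta}(a,b)|^{2}$. The main (and only) subtle point is the verification that $\Phi_{a,b}$ really is a bijection: namely, that the symmetry of the upper-right triangle sends a valid disjoint pair with the prescribed endpoints to another valid element of $C_{n+1}^{\Delta}(a,b)$ (and not, say, to a pair with swapped endpoints that would land in a different indexed set). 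Once the ordering convention $a>b$ and the assignment of the two cut points to the ``upper'' and ``lower'' path are made consistent on both sides of $\Phi_{a,b}$, everything else is routine.
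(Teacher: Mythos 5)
Your proof is correct and follows the paper's own approach: Proposition~\ref{splitpairs} is obtained by combining Remark~\ref{disjunion} with the unlabeled cut-along-the-antidiagonal lemma, and you have usefully supplied the details (uniqueness of the two antidiagonal crossings, and the transformation applied to the upper-right half) that the paper's one-line sketch leaves implicit. One small terminology point: $(x,y)\mapsto(n+1-y,n+1-x)$ is the \emph{reflection} across the line $x+y=n+1$, not a central symmetry, though the formula you use and its stated effect — swapping each path's endpoints and, after the implicit orientation reversal, preserving the $\{M_2,M_3\}$ move set — are both correct.
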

In the following part, we will modify a bit the elements of the set $C_{n+1}^{\Delta }(a,b)$ and consider a new set where we allow intersections of paths, but not one crossing the other. This is just a technical detail that it will help in the last part.
\begin{notation}
For $(a,b)\in \N\times \N$ with $a,b\leq n$ and $a\geq  b$, denote by:
$${D_{n}^\Delta} (a,b):= \{ \text{ pairs of paths in the simplex } \Delta _{n}, \text{constructed with two possible moves} \uparrow  \text{or} \rightarrow $$
$$ \ \ \ \ \ \ \ \ \ \ \ \ \ \ \text{ between the points } \left( (0,0); (n+1-a,a) \right) and \left( (0,0); (n+1-b,b) \right) respectively, $$
$$ \ \ \ \ \ \ \ \ \ \ \ \ \ \ \ \ \ \ \text{such that they can intersect each other, but they do not cross each other}\}.$$  
\end{notation}
\begin{proposition} \label{CD}There is a bijection between the following sets:
$$\eta^{n+1}_{(a,b)}:{C_{n+1}^\Delta }(a,b)\simeq{D_{n}^\Delta } (a-1,b), \ \ \ \forall (a,b)\in \bar{\Delta}^{sym}_n $$
\end{proposition}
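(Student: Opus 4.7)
The plan is to define the bijection $\eta^{n+1}_{(a,b)}$ by a pointwise coordinate shift and to verify that strict disjointness on the $C^\Delta$ side matches the weak non-crossing condition on the $D^\Delta$ side. A preliminary bookkeeping step is needed: both paths in an element of $C^\Delta_{n+1}(a,b)$ have length $n$ and use only $M_2, M_3$ moves, and since the two starting points $(1,0), (0,1)$ both lie on the anti-diagonal $x+y=1$, at each time $i \in \{0, \ldots, n\}$ the two paths occupy vertices on the common anti-diagonal $x+y = 1+i$. As $a > b$, the endpoint $(n+1-a, a)$ is strictly northwest of $(n+1-b, b)$; combined with disjointness this forces the pairing of starts to ends --- the path from $(0,1)$ must end at $(n+1-a, a)$ and the one from $(1,0)$ must end at $(n+1-b, b)$. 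Call these $\sigma_{\text{NW}}$ and $\sigma_{\text{SE}}$ respectively.

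I would then define $\eta^{n+1}_{(a,b)}(\sigma_{\text{NW}}, \sigma_{\text{SE}}) := (\sigma_{\text{NW}} - (0,1),\ \sigma_{\text{SE}} - (1,0))$, applied pointwise along each path. The shifted paths both start at $(0,0)$ and end at $(n+1-a, a-1)$ and $(n-b, b)$ respectively; these endpoints lie on the boundary $x+y=n$ of $\Delta_n$, and correspond, under the natural indexing of $D^\Delta_n$ by endpoints on this anti-diagonal, to the pair of parameters $(a-1, b)$. The shifted paths stay in $\Delta_n$: monotonicity gives $x+y \leq n$ at each vertex, while $x, y \geq 0$ follows from the fact that $\sigma_{\text{NW}}$ has $y \geq 1$ and $\sigma_{\text{SE}}$ has $x \geq 1$ at every vertex before shifting.

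The crucial verification is that the disjointness of $(\sigma_{\text{NW}}, \sigma_{\text{SE}})$ translates exactly to the weak non-crossing property in $D^\Delta_n(a-1, b)$. By the anti-diagonal observation above, disjointness at time $i$ amounts to $(\sigma_{\text{NW}}(i))_x \neq (\sigma_{\text{SE}}(i))_x$, and starting from $(\sigma_{\text{NW}}(0))_x = 0 < 1 = (\sigma_{\text{SE}}(0))_x$ a short induction on $i$ (using that two monotone lattice paths on the same anti-diagonal can only swap order by first coinciding) gives the strict inequality $(\sigma_{\text{NW}}(i))_x \leq (\sigma_{\text{SE}}(i))_x - 1$ for every $i$. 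Subtracting the shifts turns this into the weak inequality $(\tilde\sigma_{\text{NW}}(i))_x \leq (\tilde\sigma_{\text{SE}}(i))_x$, which together with the matching inequality in $y$ is exactly the non-crossing condition that allows touching but forbids crossing.

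The inverse map is given by the opposite shifts $(0,1)$ on the first path and $(1,0)$ on the second, and the same calculation in reverse sends a weakly non-crossing pair in $D^\Delta_n(a-1, b)$ to a disjoint pair in $C^\Delta_{n+1}(a,b)$. I do not expect a substantive obstacle here --- the only delicate point is keeping straight the pairing of starts to ends forced by $a > b$ and checking simplex containment after shifting, both of which follow immediately from the anti-diagonal foliation.
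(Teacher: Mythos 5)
Your proposal is correct and is essentially the same construction as the paper's: the paper defines $D' := D + (-1,1)$ so that both paths start at $(0,1)$, and then translates the ambient simplex so the common basepoint becomes the origin; composing these two steps yields exactly your map $\sigma_{\text{NW}} \mapsto \sigma_{\text{NW}} - (0,1)$, $\sigma_{\text{SE}} \mapsto \sigma_{\text{SE}} - (1,0)$. The one thing you add is the careful verification, via the anti-diagonal foliation, that strict disjointness on the $C^\Delta$ side translates to the weak non-crossing inequality on the $D^\Delta$ side, which the paper states only as ``it can be easily shown that this function is a bijection.'' (Your preliminary paragraph determining which start must pair with which end is not strictly necessary, since the paper's definition of $C^\Delta_{n+1}(a,b)$ already fixes this pairing; but it is a harmless and arguably clarifying observation given that the text and the accompanying figure use opposite conventions for that pairing.)
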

\begin{proof}
Let $(C, D)\in {C_{n+1}^\Delta }(a,b)$ be a pair of paths in the simplex $\Delta_{n+1}$, as in the picture below.\\ 
We define a new path in the following way:
$$D':=D+(-1,1)\subseteq \Delta_{n+1}$$ 
\begin{center}
\begin{equation}\label{eq:bla4}
\begin{split}
\begin{tikzpicture}
[x=1mm,y=1mm,scale=3/4,font=\Large]
\node at (-70,25) [anchor=north east] {$( \ \Large{\color{red}C}, \Large{\color{green}D} \ )\in {C_{n+1}^\Delta }(a,b)$};
\node at (12,-3) [anchor=north east] {$\Large{\color{red}n+1-a}$};
\node at (-3,33) [anchor=north east] {$\Large{\color{red}a}$};
\node at (-5,27) [anchor=north east] {$\Large{\color{red}C}$};
\node at (38,-3) [anchor=north east] {$\Large{\color{green}n+1-b}$};
\node at (27,6) [anchor=north east] {$\Large{\color{green}D}$};
\node at (-5,13) [anchor=north east] {$\Large{\color{green}b}$};
\node at (34,27) [anchor=north east] {$\Large{\color{blue} \Delta_{n}}$};
\node at (-10,2) [anchor=north east] {$\color{black} (0,0)$};
\node at (65,0) [anchor=north east] {$\color{black} (n+1,0)$};
\node at (0,50) [anchor=north east] {$\color{black} (0,n+1)$};
\draw[very thick,black!50!blue] (30,10) -- +(-31,30);
\draw[very thick,black!50!blue] (-0.5,9) -- +(30.5,0);
\draw[very thick,black!50!blue] (-0.5,9) -- +(0,31);
\draw[step=10mm,black!20,very thin] (0,0) grid (40,40);
\draw[black!20,very thin] (0,0) -- +(-5,5);
\draw[black!20,very thin] (10,0) -- +(-15,15);
\draw[black!20,very thin] (20,0) -- +(-25,25);
\draw[black!20,very thin] (30,0) -- +(-35,35);
\draw[black!20,very thin] (40,0) -- +(-45,45);
\draw[black!20,very thin] (40,10) -- +(-35,35);
\draw[black!20,very thin] (40,20) -- +(-25,25);
\draw[black!20,very thin] (40,30) -- +(-15,15);
\draw[black!20,very thin] (40,40) -- +(-5,5);
\draw[->,black!20] (0,-5) -- (0,45);
\draw[->,black!20] (-5,0) -- (45,0);
\foreach \x/\y in {0/10, 0/30, 10/0, 10/10, 30/10, 10/30} {\node at (\x,\y) [circle,fill,inner sep=1pt] {};}
\draw[very thick,black!50!green] (10,0) -- (10,10) -- (30,10); 
\draw[very thick,black!50!green,->] (10,0) -- (10,5);
\draw[very thick,black!50!green,->] (20,10) -- (25,10);
\draw[very thick,black!50!red] (0,10) -- (0,30) -- (10,30);
\draw[very thick,black!50!red,->] (0,10) -- (0,15);
\draw[very thick,black!50!red,->] (0,30) -- (5,30);
%
\draw[very thick,black!50!yellow,->] (10,0) -- (0,10);
\end{tikzpicture}
\end{split}
\end{equation}
\end{center}

\begin{center}
\begin{equation}\label{eq:bla5}
\begin{split}
\begin{tikzpicture}
[x=1mm,y=1mm,scale=3/4,font=\Large]
\node at (-35,15) [anchor=north east] {$\big( \ \Large{\color{red}C'}, \Large{\color{green}D'=D+(-1,1) } \ \big)\in {D_{n}^\Delta }(a-1,b)$};
\node at (12,-4) [anchor=north east] {$\Large{\color{red}n+1-a}$};
\node at (0,23) [anchor=north east] {$\Large{\color{red}a-1}$};
\node at (-4,18) [anchor=north east] {$\Large{\color{red}C'}$};
\node at (28,-4) [anchor=north east] {$\Large{\color{green}n-b}$};
\node at (18,7) [anchor=north east] {$\Large{\color{green}D'}$};
\node at (-2,13) [anchor=north east] {$\Large{\color{green}b}$};
\node at (26,20) [anchor=north east] {$\Large{\color{blue} \Delta_{n}}$};
\node at (-13,0) [anchor=north east] {$\color{black} (0,0)$};
\node at (45,0) [anchor=north east] {$\color{black} (n,0)$};
\node at (-2,35) [anchor=north east] {$\color{black} (0,n)$};
\draw[very thick,black!50!blue] (30,0) -- +(-30,30);
\draw[very thick,black!50!blue] (0,0) -- +(30,0);
\draw[very thick,black!50!blue] (0,0) -- +(0,30);
\draw[step=10mm,black!20,very thin] (0,0) -- (0,30) -- (30,0) -- cycle;
\draw[step=10mm,black!20,very thin] (0,10) -- (20,10) -- (20,0);
\draw[step=10mm,black!20,very thin] (0,20) -- (10,20) -- (10,0);
\draw[very thick,black!50!red] (0,0) -- (0,20) -- (10,20);
\draw[very thick,black!50!green] (0.5,0) -- (0.5,10) -- (20,10);
\end{tikzpicture}
\end{split}
\end{equation}
\end{center}

We will consider the simplex $\Delta _n$ inside $\Delta_{n+1}$, bounded by the points $(0,1),(n,1)$, and $(n+1,0)$. In this way, we will obtain a pair of paths $(C,D')\in {D_{n}^\Delta }(a-1,b)$. After that, it can be easily shown that this function is a bijection. 
\end{proof}
Putting together the correspondences from above, using Proposition \ref{splitpairs} and  Proposition \ref{CD} we conclude the following lemma. 
\begin{lemma}\label{C}
The cardinality of $C_{n+1}$ can be expressed using pairs of paths in the simplex $\Delta_n$:
$$\mid C_{n+1}\mid=\sum_{(a',b')\in \bar{\Delta}_n} \mid {D_{n}^\Delta }(a',b')\mid ^2.$$
\end{lemma}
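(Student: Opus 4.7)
The plan is to chain together the two enumerations already established. By Proposition \ref{splitpairs}, I know that
$$|C_{n+1}| = \sum_{(a,b)\in \bar{\Delta}^{sym}_n} \bigl|C_{n+1}^\Delta(a,b)\bigr|^{2},$$
so the whole task reduces to translating each factor $|C_{n+1}^\Delta(a,b)|$ into a quantity of the form $|D_n^\Delta(a',b')|$ and then repackaging the indexing set.

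First I would invoke the bijection $\eta^{n+1}_{(a,b)}\colon C_{n+1}^\Delta(a,b) \xrightarrow{\sim} D_n^\Delta(a-1,b)$ from Proposition \ref{CD}, which holds for every $(a,b)\in \bar{\Delta}^{sym}_n$. Substituting cardinalities gives
$$|C_{n+1}| = \sum_{(a,b)\in \bar{\Delta}^{sym}_n} \bigl|D_n^\Delta(a-1,b)\bigr|^{2}.$$

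The remaining step is the reindexing $a' := a - 1$, $b' := b$. The hypothesis defining $\bar{\Delta}^{sym}_n$ says $a,b \leq n+1$ and $a > b$; since $a > b \geq 0$ we automatically have $a \geq 1$, so $a' \geq 0$, and $a \leq n+1$ gives $a' \leq n$. The strict inequality $a > b$ becomes $a' + 1 > b'$, i.e.\ $a' \geq b'$, and (as noted in the excerpt) $a > b$ combined with $a \leq n+1$ forces $b \leq n$, hence $b' \leq n$. Conversely, any $(a',b')\in \bar{\Delta}_n$ lifts to $(a'+1,b')\in \bar{\Delta}^{sym}_n$. So the substitution is a bijection $\bar{\Delta}^{sym}_n \xrightarrow{\sim} \bar{\Delta}_n$, and the displayed sum becomes
$$|C_{n+1}| = \sum_{(a',b')\in \bar{\Delta}_n} \bigl|D_n^\Delta(a',b')\bigr|^{2},$$
which is the claimed identity.

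There is no real obstacle here; the proof is a purely formal composition of previously established bijections, and the only thing that requires care is the bookkeeping of the two triangular index sets $\bar{\Delta}^{sym}_n$ and $\bar{\Delta}_n$ to confirm that the shift $(a,b) \mapsto (a-1,b)$ matches them exactly. Everything else is already done in Propositions \ref{splitpairs} and \ref{CD}.
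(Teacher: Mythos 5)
Your proof is correct and is essentially the argument the paper intends: combine Proposition \ref{splitpairs} with the bijection $\eta^{n+1}_{(a,b)}$ from Proposition \ref{CD}, then reindex via $(a,b)\mapsto(a-1,b)$, which identifies $\bar{\Delta}^{sym}_n$ with $\bar{\Delta}_n$. The paper states this compression of the two propositions without spelling out the index bookkeeping, which you do carefully; no discrepancy.
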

\subsection{Correspondence between paths in the square and pairs of paths in the simplex}

\

In this section we will finish the proof of Theorem \ref{conj}. In this final part, we will show a correspondence between the sets $P_{n+1}(x,y)$ and ${D_{n}^\Delta }(a,b)$. 
We begin with the following observation.
\begin{remark}
For any pair of paths $C_1=((C_1)^k_x,(C_1)^k_y)$ and $C_2=((C_2)^k_x,(C_2)^k_y)\in {D_{n}^\Delta }(a,b)$, the condition that they do not cross each other can be stated as:
$$ \forall k , \ (C_1)^k_y\leq (C_2)^k_y.$$ 
\end{remark}
We will use this in the sequel and make a connection between the set $D^{\Delta}_n$, coming from the combinatorial model for the conjectured dimension and the set of paths $P_{n+1}$ which comes from the multiplicity spaces related to the centraliser algebra $LG_{n+1}$.
\begin{theorem} \label{3}
We have the following correspondence for any point $(x_0,y_0)\in \Delta _n$:
$$f^{n+1}_{(x_0,y_0)}:{D_{n}^\Delta }(x_0+y_0,y_0) \rightarrow P_{n+1}(x_0,y_0).$$
\end{theorem}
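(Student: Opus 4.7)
My plan is to construct the bijection $f^{n+1}_{(x_0,y_0)}$ by encoding the joint moves of the non-crossing pair $(C_1,C_2)\in D_n^\Delta(x_0+y_0, y_0)$ as a single sequence of moves for a path $\gamma \in P_{n+1}(x_0,y_0)$. Labeling $C_1$ to be the path terminating at the higher $y$-coordinate $x_0+y_0$ and $C_2$ the one terminating at $y_0$, I set
\[
\gamma(k) := \bigl((C_1)^k_y - (C_2)^k_y,\ (C_2)^k_y\bigr).
\]
This records the vertical gap between the two paths together with the height of the lower one; since both start at $(0,0)$ we have $\gamma(0)=(0,0)$, and the prescribed endpoints force $\gamma$ to terminate at $(x_0,y_0)$, which is the required endpoint for a path in $P_{n+1}(x_0,y_0)$.

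The four possible joint moves of $(C_1,C_2)$ from $\{\uparrow,\rightarrow\}^2$ then correspond to the four planar moves of $P_{n+1}$: $(\rightarrow,\rightarrow)$ keeps both the gap and the lower height fixed, and is thus encoded as the stay move $M_1$; $(\uparrow,\rightarrow)$ enlarges the gap, giving the right move $M_2$; $(\uparrow,\uparrow)$ preserves the gap and increases the lower height, giving the up move $M_3$; and $(\rightarrow,\uparrow)$ shrinks the gap while raising the lower height, giving the northwest move $M_4$. Under this dictionary the non-crossing condition $(C_1)^k_y\geq(C_2)^k_y$ translates exactly into the first-quadrant condition on $\gamma$, while the admissibility requirement for $M_4$ (namely that $x>0$ beforehand) matches the requirement that the gap be strictly positive before a $(\rightarrow,\uparrow)$ joint step, since otherwise the two paths would swap heights and cross at the next instant.

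For the inverse I will proceed step by step: a given $\gamma \in P_{n+1}(x_0,y_0)$ determines uniquely the joint move of $(C_1,C_2)$ at each time through the same dictionary, and the positions of the two lattice paths can then be reconstructed by $(C_2)^k_y = \gamma(k)_y$ and $(C_1)^k_y = \gamma(k)_y + \gamma(k)_x$, with the $x$-coordinates fixed by how many $\rightarrow$-steps each path has taken up to time $k$. The key point to verify is that the reconstructed pair is non-crossing, which is immediate from $\gamma$ remaining in the first quadrant, and that it lies in the simplex $\Delta_n$ with the prescribed endpoints. The main obstacle, beyond aligning the length and simplex conventions on both sides, is checking that the admissibility condition $\gamma(k)_x>0$ before an $M_4$ step really is equivalent to a strictly positive gap between $C_1$ and $C_2$, so that the inverse procedure always produces a valid non-crossing pair. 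This verification is direct but combinatorially delicate, as one must carefully track how the strict-versus-weak inequalities in the non-crossing condition interact with the sequence of moves used to build $\gamma$.
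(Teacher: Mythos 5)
Your proof follows the same overall strategy as the paper's: encode the joint $\{\uparrow,\rightarrow\}^2$ moves of the non-crossing pair $(C_1,C_2)$ as single $M_i$-moves of one lattice path, with the vertical gap between the two paths as the first coordinate. The nontrivial point of comparison is your choice of second coordinate. In the paper's labeling (where $C_2$ is the upper path and $y_1\le y_2$ throughout), the paper sets
\[
f^{n+1}_{(x_0,y_0)}\bigl((x_1,y_1),(x_2,y_2)\bigr)=(y_2-y_1,\;x_2),
\]
and its dictionary sends both-$\uparrow$ to $M_1$ and both-$\rightarrow$ to $M_3$. Your formula, translated to the paper's labeling, is $(y_2-y_1,\;y_1)$, i.e.\ the $y$-coordinate of the lower path, and your dictionary sends both-$\rightarrow$ to $M_1$ and both-$\uparrow$ to $M_3$. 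Both choices are internally consistent bijections on move sequences, but they give different terminal points. With $C_2$ ending at $(n-(x_0+y_0),\,x_0+y_0)$ and $C_1$ ending at $(n-y_0,\,y_0)$, the paper's formula lands at $(x_0,\,n-x_0-y_0)$, not at $(x_0,y_0)$ as the theorem requires, while your formula correctly lands at $(x_0,y_0)$. So your version corrects a slip in the paper's explicit map (the paper should have used $y_1$ rather than $x_2$, or equivalently its dictionary interchanges $M_1$ and $M_3$ relative to what the stated target forces). The rest of your argument --- non-crossing $\Leftrightarrow$ the first coordinate of $\gamma$ is nonnegative, $M_4$-admissibility $\Leftrightarrow$ the gap is strictly positive, and the step-by-step reconstruction of the pair from $\gamma$ --- matches the paper and is sound. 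Your closing worry about whether the admissibility condition $\gamma(k)_x>0$ matches a strictly positive gap is not an actual obstacle: $\gamma(k)_x$ is by definition the gap, so the equivalence is immediate.
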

\begin{proof}
Let $C_1,C_2\in {D_{n}^\Delta }(x+y,y)$. This pair of paths can be encoded in a sequence of moves of four types. We start at the common point $(0,0)$ which correspond to the step $0$. Then, we see how each of the two paths changes from one step to the other. \\ 
Suppose that $(x_1,y_1)\in C_1$ and $(x_2,y_2)\in C_2$ which correspond to the  $k^{th}$ step. In order to pass to the $(k+1)^{st}$ step, we have four possibilities.

\

{\bf Movements corresponding to ${D_{n}^\Delta }(x+y,y)$}

\

The pair $\left( (x_1,y_1), \ (x_2,y_2)\right )$ for which we know the condition $y_2\geq y_1$, can be modified in order to arrive at the next step, by adding one out of the four following pairs of vectors:
            $$\left((0,1), \ (0,1)\right)$$
            $$\left((1,0), \ (1,0)\right)$$
            $$\left((1,0), \ (0,1)\right)$$
            $$\left((0,1), \ (1,0)\right)$$   
On the other hand, any path $C\in P_{n+1}(x,y)$, can be also encoded by the moves that occur from the $k^{th}$ step to the $(k+1)^{st}$step.

\

{\bf Movements corresponding to $P_{n+1}(x,y)$}:  

\

Let $(x,y)$ a point in that occur in the path $C$. We know the condition that $x\geq 0$. In order to pass to the next point in the path, we can modify the coordinates of the point $(x,y)$ with one out of the following vectors:
            $$M1) \longleftrightarrow(0,0)$$
            $$M3)\longleftrightarrow(0,1)$$
            $$M2)\longleftrightarrow(1,0)$$
            $$ \ \ M4)\longleftrightarrow(-1,1)$$
Now, we want to establish a correspondence between the two types of movements. We will define a function 
$$f^{n+1}_{(x_0,y_0)}:{D_{n}^\Delta }(x_0+y_0,y_0)\rightarrow P_{n+1}(x_0,y_0).$$\label{fff}
Let $C_1,C_2\in {D_{n}^\Delta }(x+y,y)$. We want to send each pair of points $\left( (x_1,y_1)\in C_1,(x_2,y_2)\in C_2\right )$ into $f^{n+1}_{(x_0,y_0)}\left( (x_1,y_1),(x_2,y_2) \right )$ such that it satisfies the restrictions from $P_{n+1}(x_0,y_0)$. Since we know the condition $y_2\geq y_1$, it would be natural to send $$f^{n+1}_{(x_0,y_0)}((x_1,y_1),(x_2,y_2))_1=y_2-y_1$$ which would ensure us the necessary condition for positivity.\\
Consider $f^{n+1}_{(x_0,y_0)}$ to be defined by the following formula:
$$f^{n+1}_{(x_0,y_0)}\left( (x_1,y_1),(x_2,y_2) \right)=\left(y_2-y_1,x_2 \right).$$ 
Then $f^{n+1}_{(x_0,y_0)}\left( (0,0),(0,0)\right)=(0,0)$, so it preserves the initial points. Now we can check that this transformation, preserves correspondingly all the possible movements that we described in the two cases, in the following way:\label{rules}
$$ \left((x_1,y_1), \ (x_2,y_2) \right) \longrightarrow \left( y_2-y_1,x_2 \right)$$
$$(0,1), (0,1) \ \ \ \ \ \ \ \longleftrightarrow \ \ \ \ \ \ \ \ (0,0)$$
$$(1,0), \ (1,0) \ \ \ \ \ \ \ \longleftrightarrow \ \ \ \ \ \ \ \ (0,1)$$
$$(1,0), \ (0,1) \ \ \ \ \ \ \ \longleftrightarrow \ \ \ \ \ \ \ \ (1,0)$$
$$ \ (0,1), \ (1,0) \ \ \ \ \ \ \ \longleftrightarrow \ \ \ \ \ \ \ (-1,1)$$
This shows that the function $f^{n+1}_{(x_0,y_0)}$ is a well-defined bijection between ${D_{n}^\Delta }(x_0+y_0,y_0)$ and $P_{n+1}(x_0,y_0)$ and concludes the proof.
\end{proof}
Using the previous combinatorial interpretations, we prove Theorem \ref{conj}:
$$dim(LG_{n+1}(\alpha))=\frac{(2n)!(2n+1)!}{(n!(n+1)!)^2}.$$
\begin{proof}
$$dim(LG_{n+1}(\alpha))=^{Cor \ref{1}} \sum_{x,y\in \Delta_{n}}|P_{n+1}(x,y)|^2 =^{Thm \ref{3}} \sum_{x,y\in \Delta_{n}}|D^{\Delta}_{n+1}(x+y,y)|^2=$$
$$=^{(a'=x+y, b'=y)}\sum_{a',b'\in \bar{\Delta}_{n}}|D^{\Delta}_{n+1}(a',b')|^2=^{Lemma \ref{C}} |C_{n+1} =^{Thm \ref{2}}\frac{(2n)!(2n+1)!}{(n!(n+1)!)^2}.$$
\end{proof}
\section{Algorithm for the combinatorial computation}\label{sec7'}
In this part, we will review the main steps from above and see explicitely how starting with a simple component associated to a path from $T_{n+1}(x,y)$, we can identify it with a certain pair of paths in $C_{n+1}(x+y+1,y)$.
\begin{remark}
Let $V\big( x, (n+1)\alpha+y) \big)$ be any simple component from the decomposition of $V(0,\alpha)^{n+1}$, for a point $(x,y)\in \Delta_{n}$ (we know that all irreducible modules from $V(0,\alpha)^{\otimes n}$ have this form). Then, this module will be one of the simple modules that occur in the isotypic component 
$$T_{n+1}(x,y)\otimes V(0,\alpha).$$ 
In other words, this simple component is encoded by a pair of the following form:
$$\big( (x,y)\in \Delta_n,  \tau \in T_{n+1}(x,y) \big)$$ 
\end{remark}

\begin{remark}
From the identifications that are used in the previous section (\ref{CD},\ref{fff},\ref{ggg}), we see that for any point in the simplex $ (x,y) \in \Delta_{n}$ there is a one-to-one correspondence between the multiplicity space $T_{n+1}(x,y)$ associated to it and the set of pair of paths which cut the diagonal into two fixed points:
$$\mathscr F^{n+1}_{(x,y)}:T_{n+1}(x,y) \rightarrow C^{\Delta}_{n+1}(x+y+1,y) $$
$$\mathscr F^{n+1}_{(x,y)}={\big(\eta^{n+1}_{(x+y+1,y)}\big)^{-1}}\circ \big(f^{n+1}_{(x,y)}\big)^{-1}\circ \psi^{n+1}_{(x,y)}.$$
\end{remark}
We summarise all the parts that we have done so far in the following diagram: 
\begin{center} 
\begin{tikzpicture}
[x=1.2mm,y=1.4mm]

\node (b1)  [color=cyan]  at (-30,0)    {$ \bigoplus_{x,y \in \Delta_{n}} <P_{n+1}(x,y) \times P_{n+1}(x,y)>^{*}_{\C}$};
\node (b2) [color=red] at (-30,25)   {$\bigoplus_{x,y \in \Delta_{n}} <T_{n+1}(x,y) \times T_{n+1}(x,y)>^{*}_{\C}$};
\node (b3) [color=red] at (-30,50)   {$\bigoplus_{(x,y)\in \Delta_{n}} \mathscr{I}_{t_{n+1}(x,y)}\Big(V\big(x,(n+1)\alpha+y\big)\Big)$};
\node (b4) [color=red] at (-30,75)   {$LG_{n+1}(\alpha)$};
\node (b5) [color=orange] at (-30,100)   {$\text{dim} \big( LG_{n+1}(\alpha)\big) $};

\node (t1)  [color=blue]             at (30,0)   {$ \bigoplus_{(a',b') \in \bar{\Delta}_{n}} <D^{\Delta}_{n}(a',b') \times D^{\Delta}_{n}(a',b')>_{\C}^{*}$};
\node (t2) [color=blue] at (30,25)  {$\bigoplus_{(a,b) \in \bar{\Delta}^{sym}_{n}} <C^{\Delta}_{n+1}(a,b) \times C^{\Delta}_{n+1}(a,b)>^{*}_{\C}$};
\node (t3) [color=blue] at (30,50)  {$\bigoplus_{(a,b) \in \bar{\Delta}^{sym}_{n}} <C_{n+1}(a,b)>^{\star}_{\C}$};
\node (t4) [color=blue] at (30,75)  {$<C_{n+1}>^{\star}_{\C}$};
\node (t5) [color=green] at (30,100)  {$\frac{(2n)!(2n+1)!}{(n!(n+1)!)}^2$};
\node  [color=blue] at (0,-5)  {$(a'=x+y; \ b'=y)$};
\node  [color=blue] at (45,17)  {$(a=a'+1; \ b=b')$};
\node  [color=blue] at (15.5,38)  {$(a=x+y+1; \ b=y)$};
\draw[<->,color=blue]             (b1)      to node[right,yshift=4mm,xshift=-5mm,font=\small]{$\text{Thm} \ \ref{3}$}                           (t1);
\draw[<->,color=red]   (b1)      to node[left,font=\small]{$\text{Thm} \ \ref{Thm1} \ \ (\ref{set})$}                           (b2);
\draw[<-,color=red]   (b2)      to node[left,font=\small]{$\text{Not} \ \ref{mult}$}                           (b3);
\draw[<-,color=red]   (b3)      to node[left,font=\small]{$\text{Prop} \ \ref{inter}$}                           (b4);

\draw[<-,color=blue]   (t2)      to node[right,font=\small]{ \text{Prop} \ \ref{splitpairs}}                           (t3);

\draw[<->,color=blue]  (t1)      to node[right,font=\small]{$\text{Prop} \ \ref{CD}$}                           (t2);
\draw[->, color=green]             (t4)      to node[right,yshift=1mm,font=\small]{$\ref{2}$}   ( t5);
\draw[<-, color=blue]             (t3)      to node[right,yshift=1mm,font=\small]{$\text{Rk} \ \ref{disjunion}$}   ( t4);
\draw[->, color=orange]             (b4)      to node[left,yshift=1mm,font=\small]{$\text{dim}$}   ( b5);
\draw[<->, color=red]             (b4)     to node[right,yshift=4mm,xshift=-5mm,font=\small]{$ \text{Thm} \ \ref{Thm3}$}   ( t4);
\draw[<->, color=green]             (b5)     to node[right,yshift=4mm,xshift=-5mm,font=\small]{$ \text{Thm} \ \ref{conj}$}   ( t5);
\draw[<->,color=blue]  (t3)      to node[right,font=\small]{}                           (b3);
\draw[<->,color=blue]  (b3)      to node[right,font=\small]{}                           (t2);
\draw[<-, color=cyan, very thick]             (-21,3)     to[in=150,out=30] node[right,yshift=4mm,xshift=-5mm,font=\small]{$f^{n+1}_{(x,y)}$}   (22,3);
\draw[<-, color=cyan, very thick]             (-21,3)     to[in=150,out=30] node[right,yshift=4mm,xshift=-5mm,font=\small]{$f^{n+1}_{(x,y)}$}   (22,3);
\draw[->, color=red, very thick]             (-21,23)     to node[right,xshift=-1mm,yshift=1mm,xshift=2mm,font=\small]{$\psi^{n+1}_{(x+y+1,y)}$}  (-21,4);
\draw[->, color=blue, very thick]             (23,23)     to node[left,yshift=1mm,xshift=-1mm,font=\small]{$\eta^{n+1}_{(a,b)}$}  (23,4);
\draw[->, color=blue, very thick, dashed]             (-20,23)     to[in=-170,out=-10] node[yshift=-3mm,font=\small]{$\mathscr F^{n+1}_{(x,y)}$}  (22,23);

\end{tikzpicture}
\end{center}
\subsection{Description of $\mathscr F^{n+1}_{(x,y)}(\tau)$}
Let $(x,y) \in \Delta_n$ and a path $\tau\in T_{n+1}(x,y)$. This path can be encoded by a sequence of moves $$\tau=(\tau_1,....,\tau_n), \ \ \tau_{i} \in \{M_1,M_2,M_3,M_4 \}.$$
Let us denote $$\mathscr F^{n+1}_{(x,y)}(\tau)_i=(\sigma_i^0,...,\sigma_i^{n+1})$$
We begin with $\sigma_1^0=(0,1);  \sigma_2^0=(1,0)$. Then, each move from $\tau$ will prescribe the construction of a new double point in $\sigma$ following the rules from \ref{rules}:
$$\tau_i \leadsto \big( (\sigma_1^i, \sigma_2^i)\rightarrow(\sigma_1^{i+1}, \sigma_2^{i+1})\big)$$ 
More precisely, each move from the path $\tau$ will be replaced by two segments in the following manner:
\begin{center}
\begin{equation}\label{eq:bla5}
\begin{split}
\begin{tikzpicture}
[x=1mm,y=1mm,scale=3/4,font=\Large]
\node at (-60,25) [anchor=north east] {$\Large{M2)}$};
\node at (-60,55) [anchor=north east] {$\Large{M1)}$};
\node at (20,25) [anchor=north east] {$\Large{M4)}$};
\node at (20,55) [anchor=north east] {$\Large{M3)}$};
\node at (-40,23) [anchor=north east] {$\longleftrightarrow$};
\node at (-40,53) [anchor=north east] {$\longleftrightarrow$};
\node at (50,23) [anchor=north east] {$\longleftrightarrow$};
\node at (50,53) [anchor=north east] {$\longleftrightarrow$};
\draw[->, black!20,thick,blue] (-57,15) -- (-57,25);
\foreach \x/\y in {-57/52} {\node at (\x,\y) [circle,fill,inner sep=1pt] {};}
\draw[->, black!20,thick,blue] (20,51) -- (30,51);
\draw[->, black!20,thick,blue] (30,20) -- (20,30);
\draw[->, black!20,thick,red] (-30,50) -- (-30,60);
\draw[->, black!20,thick,green] (-15,40) -- (-15,50);
\draw[->, black!20,thick,red] (-30,25) -- (-20,25);
\draw[->, black!20,thick,green] (-15,15) -- (-5,15);
\draw[->, black!20,thick,red] (60,50) -- (60,60);
\draw[->, black!20,thick,green] (75,40) -- (85,40);
\draw[->, black!20,thick,red] (60,25) -- (70,25);
\draw[->, black!20,thick,green] (75,15) -- (75,25);
\node at (-5,45) [anchor=north east] {$\color{green}\Large{\sigma_1}$};
\node at (-20,55) [anchor=north east] {$\color{red}\Large{\sigma_2}$};
\node at (-5,15) [anchor=north east] {$\color{green}\Large{\sigma_1}$};
\node at (-20,20) [anchor=north east] {$\color{red}\Large{\sigma_2}$};
\node at (82,47) [anchor=north east] {$\color{green}\Large{\sigma_1}$};
\node at (70,55) [anchor=north east] {$\color{red}\Large{\sigma_2}$};
\node at (85,23) [anchor=north east] {$\color{green}\Large{\sigma_1}$};
\node at (70,20) [anchor=north east] {$\color{red}\Large{\sigma_2}$};
\end{tikzpicture}
\end{split}
\end{equation}
\end{center}
One concludes the folowing correspondence between simple components and pairs of planar paths, as in the figure \ref{computation}:
$$V \big(x, (n+1)\alpha+y \big) \longleftrightarrow \big(  (x,y) \in \Delta_n, \tau\in T_{n+1}(x,y) \big)\longleftrightarrow 
\big( {\mathscr F^{n+1}_{(x,y)}}(\tau)=(\sigma_1,\sigma_2)\big) \in C^{\Delta}_{n+1}(x+y+1,y)$$
\begin{center}
\begin{equation}\label{computation}
\begin{split}
\begin{tikzpicture}
[x=1mm,y=1mm,scale=4/5,font=\Large]
\draw[step=10mm,black!20,very thin] (0,0) grid (40,40);
\draw[black!20,very thin] (0,0) -- +(-5,5);
\draw[black!20,very thin] (10,0) -- +(-15,15);
\draw[black!20,very thin] (20,0) -- +(-25,25);
\draw[black!20,very thin] (30,0) -- +(-35,35);
\draw[very thick,black!50!gray] (40,0) -- +(-40,40);
\draw[black!20,very thin] (40,10) -- +(-35,35);
\draw[black!20,very thin] (40,20) -- +(-25,25);
\draw[black!20,very thin] (40,30) -- +(-15,15);
\draw[black!20,very thin] (40,40) -- +(-5,5);
\draw[->,black!20] (0,-5) -- (0,45);
\draw[->,black!20] (-5,0) -- (45,0);
\node at (12,-4) [anchor=north east] {$\Large{\color{blue}\tau}$};
\node at (20,20) [anchor=north east] {$\Large{\color{blue}T_{n+1}(x,y)}$};
\node at (20,20) [anchor=north east] {$\Large{\color{blue}T_{n+1}(x,y)}$};
\node at (43,0) [anchor=north east] {$\Large{n}$};
\node at (0,43) [anchor=north east] {$\Large{n}$};
\foreach \x/\y in {10/10} {\node at (\x,\y) [circle,fill,inner sep=1pt] {};}
\draw[very thick,black!50!blue] (0,0) -- (10,0) -- (10,10); 
\node at (88,25) [anchor=north east] {$\Large{\color{red}\sigma_2}$};
\node at (100,6) [anchor=north east] {$\Large{\color{green}\sigma_1}$};
\node at (78,34) [anchor=north east] {$\Large{\color{red}x+y+1}$};
\node at (76,12) [anchor=north east] {$\Large{\color{green}y}$};
\node at (95,-3) [anchor=north east] {$\Large{\color{red}n-x-y}$};
\node at (122,-3) [anchor=north east] {$\Large{n+1-y}$};
\node at (150,25) [anchor=north east] {$\color{blue}\mathscr F^{n+1}_{(x,y)}(\tau)=(\sigma_1,\sigma_2)$};
\draw[step=10mm,black!20,very thin] (80,0) grid (120,40);
\draw[black!20,very thin] (80,0) -- +(-5,5);
\draw[black!20,very thin] (90,0) -- +(-15,15);
\draw[black!20,very thin] (100,0) -- +(-25,25);
\draw[black!20,very thin] (110,0) -- +(-35,35);
\draw[very thick,black!50!gray] (120,0) -- +(-40,40);
\draw[black!20,very thin] (120,10) -- +(-35,35);
\draw[black!20,very thin] (120,20) -- +(-25,25);
\draw[black!20,very thin] (120,30) -- +(-15,15);
\draw[black!20,very thin] (120,40) -- +(-5,5);
\draw[->,black!20] (80,-5) -- (80,45);
\draw[->,black!20] (75,0) -- (125,0);
\foreach \x/\y in {90/30, 110/10} {\node at (\x,\y) [circle,fill,inner sep=2pt]{};}
\draw[very thick,black!50!green] (90,0) -- (90,10) -- (110,10); 
\draw[very thick,black!50!green,->] (90,0) -- (90,5);
\draw[very thick,black!50!green] (100,10) -- (105,10);
\draw[very thick,black!50!red] (80,10) -- (80,30) -- (90,30);
\draw[very thick,black!50!red,->] (80,10) -- (80,15);
\draw[very thick,black!50!red] (80,30) -- (85,30);
\end{tikzpicture}
\end{split}
\end{equation}
\end{center}

\section{Basis of matrix units for $LG_{n+1}$ using planar curves}{\label{Sec7}}
 
In this part, the aim is to describe a matrix unit basis for the centraliser algebra $LG_{n+1}(\alpha)$. As we have seen, the Links Gould centraliser algebra is a semi-simple algebra. We make the correspondence between each simple component of $LG_{n+1}(\alpha)$ and an algebra of matrices indexed by planar curves in the square, which cut the second diagonal into two fixed points.  

We start by a brief summary of the results from the last two sections, where the discussion was focused on the dimension of the algebra $LG_{n+1}(\alpha)$, and we found different ways of describing it using paths in the plane. 

On the algebraic side, as discussed in section \ref{int}, the semi-simple structure of the tensor powers of the representation $V(0,\alpha)$ has the following form:
$$V(0,\alpha)^{\otimes n+1}=\bigoplus_{x,y \in \Delta_{n}}\Big( T_{n+1}(x,y) \otimes V\big(x,(n+1)\alpha+y \big) \ \Big),$$ 
where $T_{n+1}(x,y)$ is the multiplicity space corresponding to the weight $(x,(n+1) \alpha+y)$.
For any simple representation $V\big(x, (n+1)\alpha+y\big)$, where $(x,y) \in \Delta_n$, and any natural number $k \in \N$, the $k^{th}$ corresponding intertwinner space is denote by:
$$\mathscr{I}_{k}\Big(V\big(x, (n+1)\alpha+y\big)\Big)=End_{U_q(sl(2|1))}\left( \bigoplus_{i=1}^{k}  V_i \ \right)$$
 $$\text{  where  }  V_i \simeq V\big(x, (n+1)\alpha+y\big) \ \  \forall i \in \{1,...,k\}. $$
Since these representations are simple, each intertwinner space is isomorphic to an algebra of matrices:
$$\mathscr{I}_{n+1}(V(0,\alpha))\simeq M({n+1},\Bbbk).$$
This leads to the following matrix decomposition (\ref{inter}):
$$LG_{n+1}(\alpha)\simeq \bigoplus_{x,y \in \Delta_{n}} \mathscr{I}_{t_{n+1}(x,y)}\Big(V\big(x,(n+1)\alpha+y\big)\Big)\simeq \bigoplus_{x,y \in \Delta_{n}} M(t_{n+1}(x,y),\Bbbk).$$

On the combinatorial side, we will use the models from Theorem \ref{Thm1} and Theorem \ref{3}. Putting these correspondences together, we deduce that each intertwinner space that occur in $LG_{n+1}(\alpha)$ can be characterised by certain sets of pairs of planar curves. We remind the notations that we have used in the definition \ref{disjunion}.
For $(a,b)\in \bar{\Delta}^{sym}_n$, let us consider the following sets:
$$C_{n+1}(a,b)=\{\text {pairs of disjoint paths in the square of edge } n+1, 
\text { constructed with moves } M_2 \text{ or } M_3,$$
$$ \ \ \ \ \ \ \ \ \ \ \ \ \ \ \ \ \ \ \ \ \text{ between the points }(0,1)\rightarrow (n,n+1) \text{ and } (1,0) \rightarrow (n+1,n), \text{ that cut the diagonal }$$
$$ \text{ of the square} \text{ precisely in the points} \  (n+1-a,a) \ \text {and} \ (n+1-b,b) \} .$$
$${C_{n+1}^{\Delta}}(a,b)=\{ \text{pairs of disjoint paths in the simplex } \Delta _{n+1}, \text{ constructed with moves } M_2 \text { or } M_3,$$
$$ \ \ \ \ \ \ \ \ \ \ \ \ \ \ \ \ \ \ \ \ \ \ \text{ between the points } (1,0) \rightarrow (n+1-a,a) \text{ and } (0,1) \rightarrow (n+1-b,b) \text{ respectively} \} .$$
\begin{remark}\label{diagr}
There is the following one to one correspondence:
$$ \ \ \ \ \ \mathscr{I}_{t_{n+1}(x,y)}\Big(V\big(x,(n+1)\alpha+y\big)\Big) \simeq {<C_{n+1}(x+y+1,y)>}^{\star}_{\C} , \ \ \ \ \ \ \forall \ (x,y) \in \Delta_{n}$$
\end{remark}
\begin{proof}
Using the isomorphisms from Proposition \ref{inter}, Theorem \ref{Thm1}, Theorem\ref{Thm3}, Proposition \ref{CD} and Proposition \ref{splitpairs}, we obtain that for any point $ (x,y) \in \Delta_{n}$ there are the following correspondences:

\begin{center} 
\begin{equation} \label{iiiiiii}
\begin{tikzpicture}
[x=1.2mm,y=1.4mm]

\node (b1)  [color=cyan]  at (-30,0)    {$ P_{n+1}(x,y)$};
\node (b2) [color=red] at (-30,25)   {$ T_{n+1}(x,y)$};

\node (t1)  [color=blue]             at (30,0)   {$ D^{\Delta}_{n}(a',b')$};
\node (t2) [color=blue] at (30,25)  {$C^{\Delta}_{n+1}(a,b)$};
\node  [color=blue] at (0,-5)  {$(a'=x+y; \ b'=y)$};
\node  [color=blue] at (45,17)  {$(a=a'+1; \ b=b')$};
\draw[<->,color=blue]             (b1)      to node[right,yshift=4mm,xshift=-5mm,font=\small]{$\text{Thm} \ \ref{3}$}                           (t1);
\draw[<->,color=red]   (b1)      to node[left,font=\small]{$\text{Thm} \ \ref{Thm1} \ \ (\ref{set})$}                           (b2);
\draw[<->,color=blue]  (t1)      to node[right,font=\small]{$\text{Prop} \ \ref{CD}$}                           (t2);
\draw[<-, color=cyan, very thick]             (-21,3)     to[in=150,out=30] node[right,yshift=4mm,xshift=-5mm,font=\small]{$f^{n+1}_{(x,y)}$}   (22,3);
\draw[<-, color=cyan, very thick]             (-21,3)     to[in=150,out=30] node[right,yshift=4mm,xshift=-5mm,font=\small]{$f^{n+1}_{(x,y)}$}   (22,3);
\draw[->, color=red, very thick]             (-21,23)     to node[right,xshift=-1mm,yshift=1mm,xshift=2mm,font=\small]{$\psi^{n+1}_{(x+y+1,y)}$}  (-21,4);
\draw[->, color=blue, very thick]             (23,23)     to node[left,yshift=1mm,xshift=-1mm,font=\small]{$\eta^{n+1}_{(a,b)}$}  (23,4);
\draw[->, color=blue, very thick, dashed]             (-20,23)     to[in=-170,out=-10] node[yshift=-3mm,font=\small]{$\mathscr F^{n+1}_{(x,y)}$}  (22,23);
\end{tikzpicture}
\end{equation}
\end{center}
Putting everything together, we get:
$$ \ \ \ \ \ \mathscr{I}_{t_{n+1}(x,y)}\Big(V\big(x,(n+1)\alpha+y\big)\Big) \simeq^{\text{Thm}\ref{Thm1} \ (\ref{set})} {<P_{n+1}(x,y)\times P_{n+1}(x,y)>}^{\star}_{\C} \simeq $$
$$\simeq^{\text{Thm} \ \ref{3}}{<D^{\Delta}_{n+1}(x+y,y)\times D^{\Delta}_{n+1}(x+y,y)>}^{\star}_{\C}$$
$$\simeq^{Prop \ref{CD}}{<C^{\Delta}_{n+1}(x+y+1,y)\times C^{\Delta}_{n+1}(x+y+1,y)>}^{\star}_{\C} \simeq {<C_{n+1}(x+y+1,y)>}^{\star}_{\C}.$$
\end{proof}
In order to consider a basis for $LG_{n+1}(\alpha)$, we will use its intertwinner spaces. More specifically, for each intertwinner space, one has a natural basis, given by the projectors onto the corresponding simple components. We aim to describe a correspondence between this basis and a combinatorial model, constructed using sets of certain planar paths. In the sequel, we will introduce a new notation, which will codify more compactly the set of pairs of paths $C_{n+1}$, that occurs in the above characterisation.
\begin{notation}
Consider the following indexing sets:
$$S_{n+1}:= \{ \sigma \subseteq \R^2 \mid \sigma \text{ is a simple closed curve}$$
$$ \ \ \  \ \ \ \ \ \  \ \ \ \ \ \ \ \ \ \ \ \  \ \ \ \ \ \ \ \ \ \text { containing } \left(0,0\right) \text{ and } \ \left(n+1,n+1\right) $$
$$ \ \ \ \ \ \ \ \ \ \ \ \ \ \ \ \ \ \ \  \ \ \ \ \ \ \ \ \ \ \ \ \ \ \ {\text{ constructed with the moves } M_2 \text{ or } M_3 \}}^{\ast} .$$
$$S_{n+1}(a,b):=\{ \sigma \in S_{n+1} \mid \sigma \text{  cut the secondary diagonal in}$$
$$ \ \ \ \ \ \ \ \ \ \ \ \ \ \ \ \ \ \ \ \ \ \ \ \ \ \ \ \ \ \ \ \ \ \ \ \ \ \ \ \ \ \ \ \ \ \ \ \ \ \ \ \ \ \ \ \ \ \ \ {(n+1-a,a) \text{ and }  (n+1-b,b) \} }^{\ast}, \ \forall (a,b) \in \Delta^{sym}_{n}.$$
\end{notation}

We notice that if we start with a pair of paths in ${C_{n+1}} (a,b)$ and glue them at their ends, we obtain a curve in $S_{n+1}(a,b)$ (as in figure \ref{eq:bla4}). This shows that  for any point $(a,b)\in \bar{\Delta}^{sym}_n$, the family from above, which containes curves in the square, corresponds to the set of pairs of paths in the square:
$$S_{n+1}(a,b) \simeq {C_{n+1}} (a,b).$$
\begin{center}
\begin{equation}\label{eq:bla4}
\begin{split}
\begin{tikzpicture}
[x=1mm,y=1mm,scale=4/5,font=\Large]
\draw[step=10mm,black!20,very thin] (0,0) grid (40,40);
\draw[black!20,very thin] (0,0) -- +(-5,5);
\draw[black!20,very thin] (10,0) -- +(-15,15);
\draw[black!20,very thin] (20,0) -- +(-25,25);
\draw[black!20,very thin] (30,0) -- +(-35,35);
\draw[very thick,black!50!gray] (40,0) -- +(-40,40);
\draw[black!20,very thin] (40,10) -- +(-35,35);
\draw[black!20,very thin] (40,20) -- +(-25,25);
\draw[black!20,very thin] (40,30) -- +(-15,15);
\draw[black!20,very thin] (40,40) -- +(-5,5);
\draw[->,black!20] (0,-5) -- (0,45);
\draw[->,black!20] (-5,0) -- (45,0);
\node at (12,-4) [anchor=north east] {$\Large{\color{red}n+1-a}$};
\node at (-3,33) [anchor=north east] {$\Large{\color{red}a}$};
\node at (38,-3) [anchor=north east] {$\Large{\color{green}n+1-b}$};
\node at (-5,13) [anchor=north east] {$\Large{\color{green}b}$};
\foreach \x/\y in {0/10, 0/30, 10/0, 10/10, 30/10, 10/30} {\node at (\x,\y) [circle,fill,inner sep=1pt] {};}
\foreach \x/\y in {10/30, 30/10} {\node at (\x,\y) [circle,fill,inner sep=2pt]{};}
\draw[very thick,black!50!green] (10,0) -- (10,10) -- (30,10); 
\draw[very thick,black!50!green,->] (10,0) -- (10,5);
\draw[very thick,black!50!green] (20,10) -- (25,10);
\draw[very thick,black!50!red] (0,10) -- (0,30) -- (10,30);
\draw[very thick,black!50!red,->] (0,10) -- (0,15);
\draw[very thick,black!50!red] (0,30) -- (5,30);
\draw[very thick,black!50!green] (30,10) -- (30,30) -- (40,30);
\draw[very thick,black!50!green] (30,10) -- (30,15);
\draw[very thick,black!50!green,->] (30,30) -- (35,30);
\draw[very thick,black!50!red] (10,30) -- (20,30) -- (20,40) -- (30,40);
\draw[very thick,black!50!red] (10,30) -- (15,30);
\draw[very thick,black!50!red] (20,30) -- (20,35);
\draw[very thick,black!50!red,->] (20,40) -- (25,40);
\node at (92,-4) [anchor=north east] {$\Large{\color{red}n+1-a}$};
\node at (77,33) [anchor=north east] {$\Large{\color{red}a}$};
\node at (118,-3) [anchor=north east] {$\Large{\color{green}n+1-b}$};
\node at (75,13) [anchor=north east] {$\Large{\color{green}b}$};
\draw[step=10mm,black!20,very thin] (80,0) grid (120,40);
\draw[black!20,very thin] (80,0) -- +(-5,5);
\draw[black!20,very thin] (90,0) -- +(-15,15);
\draw[black!20,thick,orange] (90,0) -- +(-10,10);
\draw[black!20,thick,orange] (90,10) -- +(-10,10);
\draw[black!20,thick,orange] (100,10) -- +(-20,20);
\draw[black!20,thick,orange] (110,20) -- +(-10,10);
\draw[black!20,thick,orange] (110,30) -- +(-10,10);
\draw[black!20,thick,orange] (120,30) -- +(-10,10);
\draw[black!20,very thin] (100,0) -- +(-25,25);
\draw[black!20,very thin] (110,0) -- +(-35,35);
\draw[very thick,black!50!gray] (120,0) -- +(-40,40);
\draw[black!20,very thin] (120,10) -- +(-35,35);
\draw[black!20,very thin] (120,20) -- +(-25,25);
\draw[black!20,very thin] (120,30) -- +(-15,15);
\draw[black!20,very thin] (120,40) -- +(-5,5);
\draw[->,black!20] (80,-5) -- (80,45);
\draw[->,black!20] (75,0) -- (125,0);
\foreach \x/\y in {90/30, 100/30, 100/40, 110/40, 110/10, 110/30, 120/30} {\node at (\x,\y) [circle,fill,inner sep=1pt] {};}
\foreach \x/\y in {90/30, 110/10} {\node at (\x,\y) [circle,fill,inner sep=2pt]{};}
\draw[very thick,black!50!blue] (80,0) -- (90,0); 
\draw[very thick,black!50!blue] (80,0) -- (80,10); 
\draw[very thick,black!50!blue] (110,40) -- (120,40); 
\draw[very thick,black!50!blue] (120,30) -- (120,40); 
\draw[very thick,black!50!green] (90,0) -- (90,10) -- (110,10); 
\draw[very thick,black!50!green,->] (90,0) -- (90,5);
\draw[very thick,black!50!green] (100,10) -- (105,10);
\draw[very thick,black!50!red] (80,10) -- (80,30) -- (90,30);
\draw[very thick,black!50!red,->] (80,10) -- (80,15);
\draw[very thick,black!50!red] (80,30) -- (85,30);
\draw[very thick,black!50!green] (110,10) -- (110,30) -- (120,30);
\draw[very thick,black!50!green] (110,10) -- (110,15);
\draw[very thick,black!50!green,->] (110,30) -- (115,30);
\draw[very thick,black!50!red] (90,30) -- (100,30) -- (100,40) -- (110,40);
\draw[very thick,black!50!red] (90,30) -- (95,30);
\draw[very thick,black!50!red] (100,30) -- (100,35);
\draw[very thick,black!50!red,->] (100,40) -- (105,40);
\end{tikzpicture}
\end{split}
\end{equation}
$$ C_{n+1}(a,b) \ \ \ \ \ \ \  \ \ \ \ \ \ \ \ \ \ \ \ \ \ \ \ \ \ \ \ \ \ \ \ \  \ \ \ \ \ \ \ \ \ \ \ \ \ \ \ \ \ \ \ \ \ \ \ \ \ \  S_{n+1}(a,b) $$  
\end{center}

Following the ideas from section \ref{sec6}, we will pass to families of paths in the simplex rather than in the square. The advantage, as we have seen in diagram \ref{iiiiiii}, is the fact that this combinatorial data is a model for the isotypic components that occur in $V(0, \alpha)^{\otimes n}$.
\begin{remark}
We remind the following one to one correspondence:
$$C_{n+1}(a,b)\simeq C^{\Delta}_{n+1}(a,b)\times C^{\Delta}_{n+1}(a,b)$$ 
Using the previous notations, one has the following bijective map:
$$C^{\Delta}_{n+1}(a,b)\times C^{\Delta}_{n+1}(a,b)\longleftrightarrow S_{n+1}(a,b) $$
\end{remark}
Coming back to the algebra $LG_{n+1}(\alpha)$, we use the projections onto each simple module that occur in the semi-simple decomposition of the tensor powers of $V(0,\alpha)$.
\begin{definition}(Projectors onto simple components)\\
Let $n \in \N$. For a point $(x,y) \in \Delta_n$, let us consider the corresponding simple module $V\big(x,(n+1)\alpha+y\big)$. Then, its isotypic component inside $V(0, \alpha)^{\otimes {n+1}} $ is identified with:
$$T_{n+1}(x,y) \otimes V\big(x,(n+1)\alpha+y \big) \subseteq V(0, \alpha)^{\otimes n+1}.$$
Let $\tau \in T_{n+1}(x,y)$ a point in the multiplicity space.
We will denote the inclusion and projection onto the component $V\big(x,(n+1)\alpha+y \big)$ that corresponds to $\tau$ by the following maps:
$$\pi^{n+1}_{((x,y), \tau)}: V(0,\alpha)^{\otimes (n+1)}\rightarrow V\big(x,(n+1)\alpha+y\big)$$
$$\iota^{n+1}_{((x,y), \tau)}:V\big(x,(n+1)\alpha+y\big)\hookrightarrow V(0,\alpha)^{\otimes (n+1)}.$$
By composition, we obtain the element in $LG_{n+1}$, which is the projector onto the simple component of highest weight $(x, (n+1)\alpha+y)$, corresponding to the path $\tau$:
$$p^{n+1}_{((x,y),\tau)}:=\iota^{n+1}_{((x,y),\tau)} \circ \pi^{n+1}_{((x,y),\tau)}.$$
\end{definition}
Another class of interesting elements of the centraliser algebra $LG_{n+1}$ is the set of projectors onto isotypic components of $V(0,\alpha)^{n+1}$. They are idempotents and one can use them to describe part of the algebra $LG_{n+1 }$ by means of diagrams.
\begin{notation}(Projectors onto isotypic components)\\
1) For $(x,y)\in \Delta_n$, denote the projection and inclusion onto the corresponding isotypic component of the module $V\big(x,(n+1)\alpha+y \big)$ by:
$$\pi^{n+1}_{(x,y)}: V(0,\alpha)^{\otimes (n+1)}\rightarrow T_{n+1}\otimes V\big(x,(n+1)\alpha+y\big)$$
$$\iota^{n+1}_{(x,y)}:T_{n+1}\otimes V\big(x,(n+1)\alpha+y\big)\hookrightarrow V(0,\alpha)^{\otimes (n+1)}.$$
2) For any point $(x,y)\in \Delta_n$, consider $p^{n+1}_{x,y}\in LG_{n+1}(\alpha)$ to be the projector onto the isotypic component of $V\big(x,(n+1)\alpha+y\big)$, defined as:
$$p^{n+1}_{x,y}:=\iota^{n+1}_{(x,y)} \circ \pi^{n+1}_{(x,y)}.$$
\end{notation}
\begin{remark}
The projectors onto the isotypic components can be described using the projectors onto the corresponding simple components in the following manner:
$$p^{n+1}_{x,y}=\sum_{\tau \in T_{n+1}(x+y)}p^{n+1}_{((x,y),\tau)}.$$
\end{remark}
\begin{definition}
Let us denote the following set of symmetric curves:
$$S_{n+1}(x+y+1,y)^{ref}:= \{ \sigma \in S_{n+1}(x+y+1,y) \mid \sigma \text { is symmetric with respect to the diagonal of the square} \}.$$
\end{definition}
\begin{remark}
Using this set, we obtain the following bijection, which gives a model for the multiplicity spaces:
$$\phi: T_{n+1}(x,y)\rightarrow S_{n+1}^{ref}(x+y+1,y)$$
$$\phi(\tau)=\mathscr F^{n+1}_{(x,y)}(\tau)\sqcup \bar{\mathscr F}^{n+1}_{(x,y)} (\tau)\sqcup $$
$$ \ \ \ \ \ \ \ \ \ \ \ \ \ \ \ \ \ \ \ \ \  \ \ \ \ \ \  \sqcup \ [(0,0),(0,1)]\sqcup[(0,0),(1,0)]\sqcup$$
$$ \ \ \ \ \ \ \ \ \ \ \ \ \ \ \ \ \ \ \ \ \ \ \ \ \ \  \ \ \ \ \ \ \ \ \ \ \ \ \ \ \ \ \ \ \ \ \sqcup \ [(n,0),(n+1,0)]\sqcup[(n,n+1),(n+1,n+1)].$$
Here, $\bar{\mathscr F}^{n+1}_{(x,y)} (\tau)$ is the reflection of $\mathscr F^{n+1}_{(x,y)} (\tau)$ with respect to the second diagonal of the square.
\end{remark}
Putting everything together, we see that one can describe matrix unit basis for the centraliser algebra $LG_{n+1}(\alpha)$, using the space of elementary matrices indexed by simple closed curves in the plane, with integer coordinates. Moreover, in this model, the projectors corresponds to the space of curves that cut the diagonal of the square into two prescribed points.

\begin{theor}(\ref{Thm3})(Matrix unit basis for $LG_{n+1}(\alpha)$)\\
A basis for the algebra $LG_{n+1}$ is given by the following set of projectors: 
$$\mathscr B_{n+1}:=\{  p^{n+1}_{((x,y),\tau)} \mid (x,y)\in \Delta_n,  \tau \in T_{n+1}(x,y) \}$$
The basis $\mathscr B_{n+1}$ is into a one-to-one correspondence with the set of symmetric curves which cut the diagonal into the points $(n-x-y,x+y+1), (n+1-y,y)$:
$$\mathscr B_{n+1} \longleftrightarrow S^{ref}_{n+1}(x+y+1,y) $$ 
$$ p^{n+1}_{((x,y),\tau)}\longleftrightarrow \phi(\tau) \ \ \ \ \ \ \ \ \ \ \ \ \ \ \ \ $$
\end{theor}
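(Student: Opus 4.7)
The plan is to reduce Theorem \ref{Thm3} to a purely formal transport of structure, using the block decomposition of $LG_{n+1}(\alpha)$ and the combinatorial bijections already established. The starting point is Proposition \ref{inter}, which gives
$$LG_{n+1}(\alpha) \;\simeq\; \bigoplus_{(x,y)\in\Delta_n} M\bigl(t_{n+1}(x,y),\Bbbk\bigr).$$
In each block $M(t_{n+1}(x,y),\Bbbk)$ I would fix the standard matrix unit basis $\{e_{\tau,\tau'}:\tau,\tau'\in T_{n+1}(x,y)\}$, where $e_{\tau,\tau}$ is the projector onto the simple summand associated to $\tau$ and $e_{\tau,\tau}e_{\sigma,\sigma}=\delta_{\tau,\sigma}e_{\tau,\tau}$. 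Summing $e_{\tau,\tau}$ over $\tau\in T_{n+1}(x,y)$ recovers the block identity, which is exactly the isotypic projector $p^{n+1}_{x,y}$.

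Next, I would transport these matrix units across the chain of bijections in diagram \eqref{iiiiiii}. Precisely, applying $\mathscr F^{n+1}_{(x,y)}$ from Remark \ref{diagr} turns a pair $(\tau_1,\tau_2)\in T_{n+1}(x,y)^2$ into a pair in $C^{\Delta}_{n+1}(x+y+1,y)^2$; Proposition \ref{splitpairs} then glues this pair along the secondary diagonal into a single element of $C_{n+1}(x+y+1,y)$ (two disjoint paths in the square cutting the diagonal at the prescribed points); and finally closing the ends of both paths at $(0,0)$ and $(n+1,n+1)$ as in \eqref{eq:bla4} produces a simple closed curve in $S_{n+1}(x+y+1,y)$. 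Taking the union over $(x,y)\in\Delta_n$ (equivalently $(a,b)\in\bar\Delta^{sym}_n$) and invoking Remark \ref{disjunion}, I obtain a bijection between the total matrix unit basis of $\bigoplus_{(x,y)} M(t_{n+1}(x,y),\Bbbk)$ and $S_{n+1}$. This proves part 1 provided we declare the algebra structure on $\langle S_{n+1}\rangle^{\star}_{\C}$ to be the transport of the matrix multiplication: two curves in the same block $S_{n+1}(a,b)$ multiply by concatenating the two half-arc pieces obtained by cutting along the secondary diagonal, and curves in different blocks multiply to zero.

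For part 2, I would identify the diagonal matrix units $e_{\tau,\tau}$ with symmetric curves. A pair of identical paths $(\sigma_0,\sigma_0)\in C^{\Delta}_{n+1}(a,b)^2$ is sent by the gluing of Proposition \ref{splitpairs} to a curve in $C_{n+1}(a,b)$ which, together with its closure at the two corners, is invariant under reflection in the secondary diagonal; conversely every curve in $S^{ref}_{n+1}(a,b)$ is produced this way. Hence $e_{\tau,\tau}\leftrightarrow\phi(\tau)\in S^{ref}_{n+1}(x+y+1,y)$, and since $e_{\tau,\tau}=p^{n+1}_{((x,y),\tau)}$ by construction, the isotypic projector reads
$$p^{n+1}_{x,y}\;=\;\sum_{\tau\in T_{n+1}(x,y)} p^{n+1}_{((x,y),\tau)}\;\longleftrightarrow\;\sum_{\sigma\in S^{ref}_{n+1}(x+y+1,y)}\sigma^{\star},$$
which is exactly the statement of part 2.

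The one step that is not purely formal, and which I expect to be the main obstacle to make precise, is the well-definedness of the multiplication on $\langle S_{n+1}\rangle^{\star}_{\C}$ — specifically, verifying that the concatenation of half-arcs obtained by cutting symmetric curves on the secondary diagonal realises matrix multiplication (i.e.\ $\sigma\cdot\sigma'=0$ unless the two curves agree on the right half/left half, in which case the product is the curve obtained by gluing the left half of $\sigma$ to the right half of $\sigma'$). This amounts to unwinding the bijections of Proposition \ref{CD} and Proposition \ref{splitpairs} and checking that the induced product on pairs $(\sigma_1,\sigma_2)$ of simplex paths is $(\sigma_1,\sigma_2)\cdot(\sigma_1',\sigma_2')=\delta_{\sigma_2,\sigma_1'}(\sigma_1,\sigma_2')$, which is precisely the matrix unit rule $e_{ij}e_{kl}=\delta_{jk}e_{il}$ used in each block. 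Once this compatibility is checked, everything else follows from orthogonality and completeness of the $e_{\tau,\tau}$ in each block.
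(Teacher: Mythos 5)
Your proposal is correct and follows essentially the same route as the paper: block decomposition via Proposition \ref{inter}, matrix units in each block, and transport through the chain of bijections $\psi^{n+1}_{(x,y)}$, $f^{n+1}_{(x,y)}$, $\eta^{n+1}_{(a,b)}$, the gluing of Proposition \ref{splitpairs}, and finally the end-closure identifying $C_{n+1}(a,b)$ with $S_{n+1}(a,b)$. Two remarks. First, the step you flag as the ``main obstacle'' --- well-definedness of the product on $\langle S_{n+1}\rangle^{\star}_{\C}$ --- is not actually a gap: both you and the paper define that product by transport of structure, so the bijection is an algebra isomorphism by fiat; what would require extra work is only the claim that the transported product admits the geometric description (cut-along-the-secondary-diagonal and reglue), and neither the paper nor you verifies that, so you are at the same level of rigour. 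Second, you are in fact more careful than the paper's statement of the theorem: the set $\mathscr B_{n+1}$ of diagonal projectors has cardinality $\sum_{(x,y)}t_{n+1}(x,y)$ and therefore cannot be a \emph{linear} basis of $LG_{n+1}$, whose dimension is $\sum_{(x,y)}t_{n+1}(x,y)^{2}$; by working with the full matrix-unit basis $\{e_{\tau,\tau'}\}$ (indexed by all closed curves in $S_{n+1}$) for part~1 and specialising to the diagonal units $e_{\tau,\tau}\leftrightarrow\phi(\tau)\in S^{ref}_{n+1}(x+y+1,y)$ for part~2, you quietly repair that imprecision.
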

\begin{remark}
Using this description of projectors, we conclude the following correspondences between simple components from the decomposition of tensor powers of $V(0,\alpha)$ and sets of symmetric curves in the plane, as in the figure \ref{ffffffffffi}:
$$ V(x, (n+1)\alpha+y) \ \ \longleftrightarrow \ \ \ \tau \in P_{n+1}(x,y)) \ \ \ \ \ \longleftrightarrow \ \ \ \ \ \ p^{n+1}_{((x,y),\tau)} \ \ \ \ \longleftrightarrow \ \ \ \ \ \ \phi(\tau) \in S^{ref}_{n+1}(x+y+1,y)$$  
\end{remark}
\begin{center}
\begin{equation}\label{ffffffffffi}
\begin{split}
\begin{tikzpicture}
[x=1mm,y=1mm,scale=4/5,font=\Large]
\draw[step=10mm,black!20,very thin] (0,0) grid (40,40);
\draw[black!20,very thin] (0,0) -- +(-5,5);
\draw[black!20,very thin] (10,0) -- +(-15,15);
\draw[black!20,very thin] (20,0) -- +(-25,25);
\draw[black!20,very thin] (30,0) -- +(-35,35);
\draw[very thick,black!50!gray] (40,0) -- +(-40,40);
\draw[black!20,very thin] (40,10) -- +(-35,35);
\draw[black!20,very thin] (40,20) -- +(-25,25);
\draw[black!20,very thin] (40,30) -- +(-15,15);
\draw[black!20,very thin] (40,40) -- +(-5,5);
\draw[->,black!20] (0,-5) -- (0,45);
\draw[->,black!20] (-5,0) -- (45,0);
\node at (12,-4) [anchor=north east] {$\Large{\color{blue}\tau}$};
\node at (20,20) [anchor=north east] {$\Large{\color{blue}T_{n+1}(x,y)}$};
\node at (20,20) [anchor=north east] {$\Large{\color{blue}T_{n+1}(x,y)}$};
\node at (43,0) [anchor=north east] {$\Large{n}$};
\node at (0,43) [anchor=north east] {$\Large{n}$};
\foreach \x/\y in {10/10} {\node at (\x,\y) [circle,fill,inner sep=1pt] {};}
\draw[very thick,black!50!blue] (0,0) -- (10,0) -- (10,10); 
\node at (97,21) [anchor=north east] {$\Large{\color{blue}\mathscr F(\tau)}$};
\node at (109,37) [anchor=north east] {$\Large{\color{blue}\bar{\mathscr F}(\tau)}$};
\node at (133,27) [anchor=north east] {$\Large{\color{blue}\phi(\tau)}$};
\node at (78,34) [anchor=north east] {$\Large{\color{red}x+y+1}$};
\node at (76,12) [anchor=north east] {$\Large{\color{green}y}$};
\node at (95,-3) [anchor=north east] {$\Large{\color{red}n-x-y}$};
\node at (122,-3) [anchor=north east] {$\Large{n+1-y}$};
\draw[step=10mm,black!20,very thin] (80,0) grid (120,40);
\draw[black!20,very thin] (80,0) -- +(-5,5);
\draw[black!20,very thin] (90,0) -- +(-15,15);
\draw[black!20,very thin] (100,0) -- +(-25,25);
\draw[black!20,very thin] (110,0) -- +(-35,35);
\draw[very thick,black!50!gray] (120,0) -- +(-40,40);
\draw[black!20,very thin] (120,10) -- +(-35,35);
\draw[black!20,very thin] (120,20) -- +(-25,25);
\draw[black!20,very thin] (120,30) -- +(-15,15);
\draw[black!20,very thin] (120,40) -- +(-5,5);
\draw[->,black!20] (80,-5) -- (80,45);
\draw[->,black!20] (75,0) -- (125,0);
\foreach \x/\y in {90/30, 110/10} {\node at (\x,\y) [circle,fill,inner sep=2pt]{};}
\draw[very thick,black!50!green] (90,0) -- (90,10) -- (110,10); 
\draw[very thick,black!50!green,->] (90,0) -- (90,5);
\draw[very thick,black!50!green] (100,10) -- (105,10);
\draw[very thick,black!50!red] (80,10) -- (80,30) -- (90,30);
\draw[very thick,black!50!red,->] (80,10) -- (80,15);
\draw[very thick,black!50!red] (80,30) -- (85,30);
\draw[very thick,black!50!blue] (80,0) -- (90,0); 
\draw[very thick,black!50!blue] (80,0) -- (80,10); 
\draw[very thick,black!50!blue] (110,40) -- (120,40); 
\draw[very thick,black!50!blue] (120,30) -- (120,40); 
\draw[very thick,black!50!green] (90,0) -- (90,10) -- (110,10); 
\draw[very thick,black!50!green,->] (90,0) -- (90,5);
\draw[very thick,black!50!green] (100,10) -- (105,10);
\draw[very thick,black!50!red] (80,10) -- (80,30) -- (90,30);
\draw[very thick,black!50!red,->] (80,10) -- (80,15);
\draw[very thick,black!50!red] (80,30) -- (85,30);
\draw[very thick,black!50!green] (110,10) -- (110,30) -- (120,30);
\draw[very thick,black!50!green] (110,10) -- (110,15);
\draw[very thick,black!50!green,->] (110,30) -- (115,30);
\draw[very thick,black!50!red] (90,30) -- (90,40) -- (100,40) -- (110,40);
\draw[very thick,black!50!red,->] (100,40) -- (105,40);
\end{tikzpicture}
\end{split}
\end{equation}
\end{center}
\begin{corollary}(Combinatorial model for the projectors onto isotypic components)\\
The projectors onto the isotypic components are in one-to-one correspondence with the following set of symmetric curves in the square:
\begin{equation}\label{projjj}
p^{n+1}_{x,y} \ \ \ \ \ \ \ \longleftrightarrow \sum_{\sigma \in S_{n+1}^{ref}(x+y+1,y)} \sigma^{\star}
\end{equation}
\end{corollary}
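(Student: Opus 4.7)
The plan is to assemble this corollary directly from the building blocks proved earlier, with essentially no new algebraic input. First, I would recall the decomposition of the isotypic projector as a sum of simple-component projectors established in the remark immediately preceding the definition of $S_{n+1}^{ref}(x+y+1,y)$, namely
$$p^{n+1}_{x,y} \;=\; \sum_{\tau \in T_{n+1}(x,y)} p^{n+1}_{((x,y),\tau)}.$$
This reduces the statement to understanding the right-hand side under the matrix unit identification.

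Next, I would apply Theorem~\ref{Thm3}(1) (the matrix unit basis statement), which sends each simple-component projector $p^{n+1}_{((x,y),\tau)}$ to the elementary matrix $\phi(\tau)^{\star}$ indexed by the symmetric planar curve $\phi(\tau)$. Since this identification is an algebra isomorphism, it is in particular $\C$-linear, and so it commutes with the sum above. Combined with the bijectivity of $\phi : T_{n+1}(x,y) \to S_{n+1}^{ref}(x+y+1,y)$ recorded in the last remark of Section~\ref{Sec7}, the image of $p^{n+1}_{x,y}$ becomes
$$\sum_{\tau \in T_{n+1}(x,y)} \phi(\tau)^{\star} \;=\; \sum_{\sigma \in S_{n+1}^{ref}(x+y+1,y)} \sigma^{\star},$$
which is exactly the formula \eqref{projjj}.

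The only point deserving a brief verification is that the index set matches: the construction of $\phi$ glues $\mathscr{F}^{n+1}_{(x,y)}(\tau)$ to its mirror image $\bar{\mathscr{F}}^{n+1}_{(x,y)}(\tau)$, and by Remark~\ref{diagr} together with Proposition~\ref{CD} the pair $\mathscr{F}^{n+1}_{(x,y)}(\tau)$ lies in $C^{\Delta}_{n+1}(x+y+1, y)$; hence the glued symmetric curve cuts the secondary diagonal precisely at $(n-x-y,\,x+y+1)$ and $(n+1-y,\,y)$, placing it in $S_{n+1}^{ref}(x+y+1,y)$. Thus the expected obstacle is purely bookkeeping — tracking the shift of coordinates $(a,b) = (x+y+1, y)$ through the chain of bijections — and no new combinatorial or representation-theoretic argument is required beyond what has already been established.
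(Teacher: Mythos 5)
Your proposal is correct and follows exactly the route the paper has in mind: the paper states this as an immediate corollary of the decomposition $p^{n+1}_{x,y} = \sum_{\tau} p^{n+1}_{((x,y),\tau)}$ (the remark on projectors), the bijection $\phi : T_{n+1}(x,y) \to S^{ref}_{n+1}(x+y+1,y)$, and the correspondence $p^{n+1}_{((x,y),\tau)} \leftrightarrow \phi(\tau)$ from Theorem~\ref{Thm3}. Your bookkeeping of the coordinate shift $(a,b) = (x+y+1,y)$ through Proposition~\ref{CD} and Remark~\ref{diagr} is the right verification to include, and no further argument is needed.
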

\begin{corollary}
a) The intertwinner spaces are identified with spaces generated by curves which cut the diagonal in two fixed points:
$$\mathscr{I}_{t_{n+1}(x,y)}\Big(V\big(x,(n+1)\alpha+y\big)\Big) \simeq {<S_{n+1}(x+y+1,y)>_{\C}}^{\star}, \ \ \forall (x,y)\in \Delta_n.$$
b)The endomorphism algebra has the following combinatorial description:
$$LG_{n+1}(\alpha)\simeq {<S_{n+1}>_{\C}}^{\star}.$$
\end{corollary}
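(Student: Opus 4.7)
The plan is to assemble this theorem entirely from the combinatorial bijections established in Sections \ref{sec5}--\ref{sec7'}, transported across the Artin--Wedderburn decomposition of Proposition \ref{inter}. First I would record that, because $\alpha\notin\Q$ forces every irreducible summand $V\bigl(x,(n+1)\alpha+y\bigr)$ in $V(0,\alpha)^{\otimes(n+1)}$ to be typical, Proposition \ref{end} gives a canonical matrix unit structure: for each $(x,y)\in\Delta_n$ and each ordered pair $(\tau,\tau')\in T_{n+1}(x,y)\times T_{n+1}(x,y)$ one gets a well-defined elementary matrix $e^{n+1}_{((x,y);\tau,\tau')}\in LG_{n+1}(\alpha)$, and these elements form a $\C$-basis of $LG_{n+1}(\alpha)$ obeying the usual matrix unit calculus $e_{\tau,\tau'}\,e_{\sigma,\sigma'}=\delta_{\tau'\sigma}\,e_{\tau,\sigma'}$. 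The diagonal entries $e_{\tau,\tau}$ are precisely the primitive idempotents $p^{n+1}_{((x,y),\tau)}$.

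The second step is to translate the index set $T_{n+1}(x,y)\times T_{n+1}(x,y)$ into the promised set of planar curves. For a single factor, the composition
$$\mathscr{F}^{n+1}_{(x,y)}=(\eta^{n+1}_{(x+y+1,y)})^{-1}\circ (f^{n+1}_{(x,y)})^{-1}\circ \psi^{n+1}_{(x,y)}\colon T_{n+1}(x,y)\longrightarrow C^{\Delta}_{n+1}(x+y+1,y)$$
is already available (cf.\ diagram \ref{iiiiiii}). Given a pair $(\tau,\tau')$, I would glue $\mathscr{F}^{n+1}_{(x,y)}(\tau)$ in the lower triangle of the $(n+1)\times(n+1)$ square with the reflection of $\mathscr{F}^{n+1}_{(x,y)}(\tau')$ across the secondary diagonal, and close up with the two short segments $[(0,0),(0,1)]\cup[(0,0),(1,0)]$ and $[(n,n+1),(n+1,n+1)]\cup[(n+1,n),(n+1,n+1)]$. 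Proposition \ref{splitpairs} together with the remark $S_{n+1}(a,b)\simeq C_{n+1}(a,b)$ ensures that this procedure is a bijection
$$T_{n+1}(x,y)\times T_{n+1}(x,y)\;\xrightarrow{\;\sim\;}\;S_{n+1}(x+y+1,y),$$
and taking the disjoint union over $(x,y)\in\Delta_n$ identifies the full matrix unit index set with $S_{n+1}$.

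Pulling the matrix unit product structure across these bijections yields the algebra isomorphism of part (1): define $\sigma^{\star}$ to be the elementary matrix corresponding to $\sigma\in S_{n+1}$ via the above recipe, declare $\sigma^{\star}\cdot\rho^{\star}=0$ whenever $\sigma$ and $\rho$ lie in different blocks $S_{n+1}(a,b)$, and when they lie in the same block use the matrix unit rule on the ``legs'' obtained by cutting along the secondary diagonal. For part (2), notice that the gluing of $\mathscr{F}^{n+1}_{(x,y)}(\tau)$ with the reflection of $\mathscr{F}^{n+1}_{(x,y)}(\tau')$ produces a curve symmetric with respect to the secondary diagonal exactly when $\tau=\tau'$; hence $p^{n+1}_{((x,y),\tau)}\leftrightarrow\phi(\tau)\in S^{ref}_{n+1}(x+y+1,y)$, and summing over $\tau\in T_{n+1}(x,y)$ produces the identity
$$p^{n+1}_{x,y}=\sum_{\tau\in T_{n+1}(x,y)}p^{n+1}_{((x,y),\tau)}\;\longleftrightarrow\;\sum_{\sigma\in S^{ref}_{n+1}(x+y+1,y)}\sigma^{\star}.$$

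The main obstacle I anticipate is purely bookkeeping rather than conceptual: one must verify that the gluing/reflection recipe above really is a bijection with $S_{n+1}(a,b)$, i.e.\ that every simple closed curve in the square built from moves $M_2,M_3$ and passing through $(0,0),(n+1,n+1)$ is produced uniquely by pairing one path in $C^{\Delta}_{n+1}(a,b)$ (the part in the lower triangle) with another in the same set (the reflection of the part in the upper triangle), and that the cutting points on the secondary diagonal are exactly $(n+1-a,a)$ and $(n+1-b,b)$. Once this geometric dictionary is in place, the multiplication rule and the identification of projectors follow automatically from the matrix unit calculus.
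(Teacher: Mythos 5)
Your proposal is correct and reconstructs exactly the chain of identifications the paper intends: Proposition \ref{inter} gives the matrix-ring decomposition, Remark \ref{diagr} (via the maps $\psi$, $f$, $\eta$ assembled into $\mathscr F^{n+1}_{(x,y)}$) identifies each intertwiner space with $<C_{n+1}(x+y+1,y)>^{\star}_{\C}$, and the gluing $C_{n+1}(a,b)\simeq S_{n+1}(a,b)$ together with the disjoint union over $\Delta_n$ yields both parts (a) and (b). One small note in your favor: the closing segments you write, $[(n,n+1),(n+1,n+1)]$ and $[(n+1,n),(n+1,n+1)]$, are the correct ones near the corner $(n+1,n+1)$, matching the paper's figure, whereas the paper's displayed formula for $\phi$ contains what appears to be a typo ($[(n,0),(n+1,0)]$).
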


\

}

\
\
\url{https://www.maths.ox.ac.uk/people/cristina.palmer-anghel}  

\end{document}